\tikzstyle{block} = [rectangle, draw, 
\newcommand{\vvert}[1]{{\left\vert\kern-0.25ex\left\vert\kern-0.25ex\left\vert #1 
    \right\vert\kern-0.25ex\right\vert\kern-0.25ex\right\vert}}
\DeclareMathOperator*{\argmin}{arg\,min}
\newcommand{\R}{\mathds{R}}
\renewcommand{\S}{\mathcal{S}}
\newcommand\inner[2]{\langle #1,#2\rangle}
\newtheorem{stat}{Statement}[section]
\newtheorem{theorem}[stat]{Theorem}
\newtheorem{lemma}[stat]{Lemma}
\theoremstyle{definition}
\newtheorem{definition}[stat]{Definition}
\newtheorem{remark}[stat]{Remark}
\numberwithin{equation}{section}
\begin{document}
\begin{frontmatter}
\title{Entropic Selection Principle for Monge's Optimal Transport}
\runtitle{Entropic Selection Principle}
\begin{aug}
\author[A]{\fnms{Shrey} \snm{Aryan}\ead[label=e1]{shrey183@mit.edu}}\and
\author[B]{\fnms{Promit} \snm{Ghosal}\ead[label=e2]{promit@uchicago.edu}}
\address[A]{Department of Mathematics, Massachusetts Institute of Technology, \printead{e1}}
\address[B]{Department of Statistics, University of Chicago, \printead{e2}}
\end{aug}

\begin{abstract}
 We investigate the small regularization limit of entropic optimal transport with the Euclidean cost in dimensions $d>1$, where the marginal measures are supported on disjoint compact sets, and are absolutely continuous with respect to the Lebesgue measure with continuous densities. Our results establish that the limiting optimal transport plan is supported on \emph{transport rays}. Furthermore, within each transport ray, the limiting transport plan uniquely minimizes a relative entropy functional with respect to specific reference measures supported on the rays. This provides a complete and unique characterization of the limiting transport plan. While similar results have been obtained for \(d = 1\) in \cite{Marino} and for discrete measures in \cite{peyré2020computationaloptimaltransport}, this work resolves the previously open case in higher dimensions $d>1.$  
\end{abstract}

\end{frontmatter}
\section{Introduction}\label{sec_intro}

Optimal transport has emerged as one of the most prominent mathematical fields in recent decades, forging deep connections with various branches of mathematics. The Monge-Kantorovich optimal transport problem aims to transform one probability measure into another while minimizing a specified transportation cost. In Monge's original formulation, the objective was to minimize the average distance transported. This leads to the challenging problem of finding an optimal transport map that pushes one distribution onto another while preserving mass. The existence and properties of Monge’s optimal transport map have been established under various assumptions through seminal works of Caffarelli, Feldman, and McCann \cite{mcann-caff-feldman} and Evans and Gangbo \cite{evans-gangbo}. In particular, Caffarelli, Feldman, and McCann constructed Monge’s transport map along one-dimensional structures known as transport rays by analyzing the limits of solutions to relaxed optimal transport problems that approximate the original Monge formulation. This approach was subsequently extended to the setting of Riemannian manifolds in~\cite{manifold}.

Building upon the classical theory of Monge's problem, a significant development was introduced by Klartag~\cite{klartag2017needle}, who established a striking connection between optimal transport and localization techniques in convex and Riemannian geometry. The central insight of this approach is that certain high-dimensional geometric and functional inequalities can be effectively studied by reducing them to families of one-dimensional problems along so-called needles—i.e., transport rays endowed with appropriate measures. This localization paradigm has a rich legacy in geometric analysis, underpinning landmark results such as those of Payne and Weinberger~\cite{payne1960optimal}, Gromov~\cite{gromov1987generalization}, Lovász and Simonovits~\cite{lovasz1993random}, and Kannan, Lovász, and Simonovits~\cite{kannan1995isoperimetric}. Klartag’s work extends this powerful idea to Riemannian manifolds satisfying the curvature–dimension condition $\mathsf{CD}(\kappa,d)$ by deeply analyzing the structure of Kantorovich potentials and transport rays in such spaces.

While the classical optimal transport theory has achieved considerable success, its direct implementation is often computationally prohibitive in high-dimensional settings due to the non-smooth nature of the transport problem. A significant driving force behind the recent interest in optimal transport has been the development and popularization of entropic optimal transport, which offers a computationally efficient framework for approximating classical optimal transport problems. The entropic optimal transport problem, pioneered by Cuturi \cite{Cuturi}, modifies the Kantorovich formulation by penalizing the coupling’s deviation from a reference measure through a relative entropy term, enabling faster and more stable numerical algorithms. When the regularization parameter \(\varepsilon > 0\) is small, the entropic optimal transport provides an approximation of the classical optimal transport map.

Given its computational advantages, a natural theoretical question arises: which optimal transport map is selected as the entropic regularization parameter \(\varepsilon\) approaches zero, especially in Monge's optimal transport problem? Understanding this limiting behavior is crucial for bridging the gap between the entropic and classical optimal transport theories. While the existence of limits has been established under various settings, the precise characterization of the limiting transport map remains an active area of research, especially when the transport cost lacks strict convexity or smoothness. In this paper, we contribute to this line of work by providing a detailed variational characterization of the entropic limit, and by exploring entropic optimal transport through a localization lens—analyzing the entropic optimal coupling via restrictions to shrinking neighborhoods along transport rays, thereby establishing a localization principle tailored to the entropic setting.

Given two probability measures, $\mu, \nu \in \mathcal{P}(\R^d)$, the Monge-Kantorovich formulation of Monge's optimal transport problem seeks to find an optimal coupling $\pi\in \mathcal{P}(\R^d \times \R^d)$ satisfying
\begin{align}\label{eqn:kant-l1}
\pi := \argmin_{\pi \in \Gamma(\mu, \nu)} \int \|x - y\| \ \mathrm{d}\pi(x, y),
\end{align}
where $\|\cdot\|$ denotes the Euclidean norm and $\Gamma(\mu, \nu) := \{\pi \in \mathcal{P}(\R^d \times \R^d) : (\operatorname{proj}_X)_{\#}\pi = \mu, \ (\operatorname{proj}_Y)_{\#}\pi = \nu \}.$  
Caffarelli, Feldman, and McCann \cite{mcann-caff-feldman}, along with Evans and Gangbo \cite{evans-gangbo}, constructed a solution to \eqref{eqn:kant-l1} in the form \( \pi = (\operatorname{Id} \otimes T)\# \mu \), where \(T\) is an optimal transport map that pushes \(\mu\) to \(\nu\) along straight-line paths, commonly referred to as transport rays connecting \(\mathcal{X}\) to \(\mathcal{Y}\) (see Section \ref{sec:l1-ot}). The entropic regularization of the above optimal transport problem is 
\begin{align}\label{defn:pi_eps}
\pi^{\varepsilon}:=\argmin_{\pi \in \Gamma(\mu, \nu)}\left(\int_{\mathbb{R}^d \times \mathbb{R}^d}\|x-y\| \mathrm{d} \pi+\varepsilon \mathsf{H}(\pi \mid \mu \otimes \nu)\right),    
\end{align}
where $\mathsf{H}$ represents the relative entropy defined by
\begin{align}\label{eqn:entropy}
\mathsf{H}(\pi \mid P):= \begin{cases}\int_{\mathbb{R}^d \times \mathbb{R}^d} \log \left(\frac{\mathrm{d} \pi}{\mathrm{~d} P}\right) \mathrm{d} \pi, & \text { if } \pi \ll P \\ \infty, & \text { otherwise }\end{cases}.
\end{align}
The variational problem described above is equivalent to stating that \( \pi^{\varepsilon} \) minimizes the relative entropy \( \mathsf{H}(\pi \mid e^{-\|x-y\|/\varepsilon} \mu \otimes \nu) \) among all couplings \( \pi \in \Gamma(\mu,\nu) \).
As $\varepsilon \to 0$, the entropically regularized coupling $\pi^\varepsilon$ weakly converges to a coupling $\Tilde{\pi}_{\text{opt}}$ that minimizes \eqref{eqn:kant-l1}. Since the cost $c(x, y):= \|x - y\|$ is not strictly convex, the Monge-Kantorovich problem generally admits infinitely many minimizers (cf. Remark 2.7.4 in \cite{figalli-glaudo}), leaving the selection of $\Tilde{\pi}_{\text{opt}}$ unclear as $\varepsilon$ vanishes. While Marino and Louet \cite{Marino} addressed this issue when $d=1$, the higher-dimensional case has remained an important open problem. In this paper, we resolve this question by providing a variational characterization of the limiting coupling $\Tilde{\pi}_{\text{opt}}$ when $d\geq 2.$

To state our main theorem, we introduce some notation. Let $\mathcal{X}, \mathcal{Y} \subset \mathbb{R}^d$ be compact sets, and let $\mu$ and $\nu$ be probability measures with continuous densities $p: \mathcal{X} \to \mathbb{R}_{>0}$ and $q: \mathcal{Y} \to \mathbb{R}_{>0}$ respectively. Let $\mathcal{R}$ be a \emph{maximal} transport ray, and denote by $\mu_{\llcorner \mathcal{R}}$ and $\nu_{\llcorner \mathcal{R}}$ the restrictions of $\mu$ and $\nu$ to $\mathcal{R} \cap \mathcal{X}$ and $\mathcal{R} \cap \mathcal{Y}$, respectively (see Section~\ref{sec:Restriction} for more details about their definitions). The limiting coupling $\Tilde{\pi}_{\text{opt}}$ is then characterized as follows:
\begin{theorem}\label{thm:Theorem1}
Let $d\geq 2$ and $\pi^\varepsilon \rightharpoonup \Tilde{\pi}^{\text{opt}}$ be the weak limit of the optimal entropic coupling as $\varepsilon \to 0.$ Then $\Tilde{\pi}^{\text{opt}}$ can be characterized as the minimizer of the following cost functional
\begin{align}\label{eq:Characterization}
    \Tilde{\pi}^{\text{opt}}_{\llcorner \mathcal{R}} = \operatorname{argmin} \left\{ \mathsf{H}\big(\pi|\mathcal{F}(x,y) \mu_{\llcorner \mathcal{R}}\otimes \nu_{\llcorner \mathcal{R}}\big):  \pi\in \Gamma(\mu_{\llcorner \mathcal{R}},\nu_{\llcorner \mathcal{R}}) \right\}
\end{align}
for any transport ray $\mathcal{R}$ and $\mathcal{F}: \mathcal{R} \cap \mathcal{X}\times \mathcal{R} \cap \mathcal{Y} \to \mathbb{R}_{\geq 0}$ is defined as  $$\mathcal{F}(x,y) :=  (2\pi \|x-y\|)^{\frac{d-1}{2}}e^{c\|x-y\|} $$   
where $c\geq 0$ is a constant which depends on $\mu$, $\nu$ and the transport ray $\mathcal{R}$.
\end{theorem}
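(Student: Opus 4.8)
The plan is to pass to the limit inside the Schr\"odinger potential representation of $\pi^\varepsilon$ and to run a Laplace-type asymptotic analysis in the $d-1$ directions transverse to each transport ray, while a genuine one-dimensional problem survives along the ray itself. Recall the standard ingredients: $\mathrm{d}\pi^\varepsilon(x,y)=\exp\big((f_\varepsilon(x)+g_\varepsilon(y)-\|x-y\|)/\varepsilon\big)\,\mathrm{d}\mu(x)\,\mathrm{d}\nu(y)$ for potentials $(f_\varepsilon,g_\varepsilon)$ solving the marginal identities $e^{f_\varepsilon(x)/\varepsilon}\int_{\mathcal Y}e^{(g_\varepsilon(y)-\|x-y\|)/\varepsilon}q(y)\,\mathrm{d}y=1$ and its symmetric counterpart; by Evans--Gangbo and Caffarelli--Feldman--McCann the weak limit $\tilde\pi^{\text{opt}}$ is $L^1$-optimal, hence carried by the union of maximal transport rays and disintegrating as $\tilde\pi^{\text{opt}}=\int\tilde\pi^{\text{opt}}_{\llcorner\mathcal R}\,\mathrm d\mathfrak m(\mathcal R)$; and, after a suitable normalization, $f_\varepsilon\to f$ and $g_\varepsilon\to g$ locally uniformly to a pair of $L^1$-Kantorovich potentials, tight on the transport set, so that along the interior of each ray $\mathcal R$ both $f$ and $g$ are affine of unit slope and $f\oplus g-\|x-y\|\equiv 0$ there. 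The substance lies one order deeper: I would show that along $\mathcal R$ the potentials admit expansions whose $\varepsilon\log\varepsilon$-terms account for the exponent $(d-1)/2$ and whose $\varepsilon$-corrections $\varepsilon a(x),\varepsilon b(y)$ are affine along $\mathcal R$ with opposite slopes $\mp c$; the number $c\ge0$ is exactly this common slope, and depends only on $\mathcal R,\mu,\nu$.

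The technical heart is the transverse Laplace expansion. Fix $x$ in the interior of a ray with unit direction $e$ and write the nearby $y$ as $y=T(x)+w_\parallel e+w_\perp$. In the $y$-integral of the marginal identity there is no concentration in the $e$-direction, because $g_\varepsilon(y)-\|x-y\|$ equals the constant $f(x)$ to leading order along the ray, so that direction contributes a genuine one-dimensional integral over $\mathcal R\cap\mathcal Y$; transverse to $e$ the Euclidean norm has Hessian $\|x-y\|^{-1}(I-ee^\top)$, and Laplace's method in $w_\perp$ produces a Gaussian normalization comparable to $(2\pi\varepsilon\|x-y\|)^{(d-1)/2}$. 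Tracking every factor, the renormalized density of $\pi^\varepsilon_{\llcorner\mathcal R}$ with respect to $\mu_{\llcorner\mathcal R}\otimes\nu_{\llcorner\mathcal R}$ converges to a constant multiple of $(2\pi\|x-y\|)^{(d-1)/2}e^{a(x)+b(y)}=(2\pi\|x-y\|)^{(d-1)/2}e^{c\|x-y\|}=\mathcal F(x,y)$, the overall constant being irrelevant to the $\operatorname{argmin}$.

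It remains to upgrade this convergence to the variational characterization. One checks that the ray-restricted entropic functionals $\Gamma$-converge as $\varepsilon\to0$ to $\pi\mapsto\mathsf H\big(\pi\mid\mathcal F\,\mu_{\llcorner\mathcal R}\otimes\nu_{\llcorner\mathcal R}\big)$ on $\Gamma(\mu_{\llcorner\mathcal R},\nu_{\llcorner\mathcal R})$: the $\liminf$ inequality follows from joint lower semicontinuity of relative entropy together with the density convergence above, and a recovery sequence is obtained from the smooth finite-$\varepsilon$ plans restricted to shrinking tubes around $\mathcal R$. Since relative entropy is strictly convex, the limiting functional has a unique minimizer, and $\pi^\varepsilon\rightharpoonup\tilde\pi^{\text{opt}}$ then forces $\tilde\pi^{\text{opt}}_{\llcorner\mathcal R}$ to coincide with it, which is \eqref{eq:Characterization}.

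The main obstacle is making the transverse Laplace step uniform over the ray. The Kantorovich potential $g$ is only Lipschitz, and its transverse second-order behavior is merely pinched between curvatures that blow up like the reciprocal distance to the two ends of $\mathcal R$ --- equivalently, one must control the second fundamental form of the level sets of $g$ and the focusing of rays at their $\mathcal X$-endpoints --- so one has to show that it is the transverse Hessian of the cost, $\|x-y\|^{-1}I$, rather than that of $g$, that sets the Gaussian width; to obtain the error estimates uniformly up to, but not at, the endpoints where $\mathcal F$ degenerates or blows up; and to justify that the first-order corrections $a,b$ are affine along $\mathcal R$, so that $c$ is genuinely constant and non-negative. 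Handling the $\mathfrak m$-negligible family of atypical rays and the limited regularity of the ray decomposition is a further, more routine, difficulty.
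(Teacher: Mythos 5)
Your overall strategy (tracking the Schr\"odinger potentials and running a Laplace expansion in the $d-1$ transverse directions, which produces the $(2\pi\|x-y\|)^{(d-1)/2}$ normalization) matches the paper's, and your identification of the main technical obstacles is apt. However, there is a genuine gap in how you account for the factor $e^{c\|x-y\|}$, and it propagates into an inconsistency between your two concluding paragraphs.

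You claim that the $O(\varepsilon)$ corrections $a(x),b(y)$ to the potentials are affine along $\mathcal{R}$ with opposite slopes $\mp c$, so that $e^{a(x)+b(y)}=e^{c\|x-y\|}$ and the renormalized density of $\pi^\varepsilon_{\llcorner\mathcal{R}}$ with respect to $\mu_{\llcorner\mathcal{R}}\otimes\nu_{\llcorner\mathcal{R}}$ converges to a constant multiple of $\mathcal{F}$. This cannot be right: if the limiting density were $\mathcal{F}$ (up to normalization), its marginals would be proportional to $\int\mathcal{F}(\cdot,y)\,\mathrm{d}\nu_{\llcorner\mathcal{R}}(y)$ and $\int\mathcal{F}(x,\cdot)\,\mathrm{d}\mu_{\llcorner\mathcal{R}}(x)$, which are not $\mu_{\llcorner\mathcal{R}}$ and $\nu_{\llcorner\mathcal{R}}$ in general, contradicting $\tilde\pi^{\mathrm{opt}}_{\llcorner\mathcal{R}}\in\Gamma(\mu_{\llcorner\mathcal{R}},\nu_{\llcorner\mathcal{R}})$. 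It is also internally inconsistent with your $\Gamma$-convergence paragraph, where you correctly identify $\tilde\pi^{\mathrm{opt}}_{\llcorner\mathcal{R}}$ as the \emph{entropy minimizer} relative to $\mathcal{F}\mu_{\llcorner\mathcal{R}}\otimes\nu_{\llcorner\mathcal{R}}$ --- whose density is $\mathcal{F}e^{\mathfrak{f}+\mathfrak{g}}$ with $\mathfrak{f},\mathfrak{g}$ the Schr\"odinger potentials from \eqref{eq:mathfrak_f}--\eqref{eq:mathfrak_g}, not $\mathcal{F}$ itself.

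The correct mechanism for $e^{c\|x-y\|}$ is different and is the technical content of Theorem~\ref{lem:Deri_Conv} and Lemma~\ref{lem:const-derivative} in the paper: what is constant along $\mathcal{R}$ is the rescaled \emph{transverse} gradient of $f_\varepsilon$ and $g_\varepsilon$, converging to $\mp c(1,\ldots,1)$. This produces a cross term $\langle v,\bar{\mathfrak{x}}^\perp-\bar{\mathfrak{y}}^\perp\rangle$ inside the transverse Gaussian exponent $-\|\bar{\mathfrak{x}}^\perp-\bar{\mathfrak{y}}^\perp\|^2/(2\|x-y\|)$; completing the square and integrating yields $(2\pi\|x-y\|)^{(d-1)/2}e^{\mathrm{const}\cdot\|v\|^2\|x-y\|}$, which is where $\mathcal{F}$ comes from. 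The one-dimensional $O(\varepsilon)$ corrections along the ray are genuinely non-affine: they are exactly the Schr\"odinger potentials enforcing the ray-marginal constraints, and they survive in the limit as the $e^{\mathfrak{f}(x)+\mathfrak{g}(y)}$ factor. Establishing (i) that the transverse gradient is the \emph{same constant} at all interior points of the ray (a fixed-point/averaging argument in Lemma~\ref{lem:const-derivative}), and (ii) that $(\tilde f_\varepsilon+\tfrac{d-1}{2}\log\varepsilon,\tilde g_\varepsilon)$ converges to a pair of one-dimensional Schr\"odinger potentials, are precisely the steps your sketch is missing. Once those are in place, the paper's proof does not need the $\Gamma$-convergence framework at all: it tests $\pi^\varepsilon$ on thin product cylinders $\mathfrak{C}\times\mathfrak{C}'$ around segments of $\mathcal{R}$, uses the large-deviation bound (Lemma~\ref{claim:lower bound}) to kill contributions off the ray tube, and reads off the limiting Radon--Nikodym derivative directly. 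Your recovery-sequence construction would also need more care, since the finite-$\varepsilon$ plans only recover the optimizer and not arbitrary competitors in $\Gamma(\mu_{\llcorner\mathcal{R}},\nu_{\llcorner\mathcal{R}})$.
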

\begin{remark}
     Since the set of couplings \( \Gamma(\mu, \nu) \) is compact in the weak topology, and any limiting coupling of the sequence \( \pi^{\varepsilon} \) is uniquely characterized by \eqref{eq:Characterization}, Theorem~\ref{thm:Theorem1} establishes that \( \pi^{\varepsilon} \) indeed converges to the limiting coupling \( \Tilde{\pi}^{\text{opt}} \) as described by \eqref{eq:Characterization}. It is important to note that Theorem~\ref{thm:Theorem1} does not explicitly specify the value of the constant \( c \) in \( \mathcal{F}(x, y) \). This constant depends implicitly on the limiting behavior of the derivatives of the entropic optimal transport potentials. Further details regarding this dependency will be provided in Section~\ref{subsec:proof-sketch}.
\end{remark}

\subsection{Proof Sketch}\label{subsec:proof-sketch}
The primary objective of this section is to demonstrate how the entropic optimal transport approach selects a unique optimal transport plan as the regularization parameter $\varepsilon \to 0$. For norm cost $\|x-y\|$, which is not strictly convex, the corresponding optimal transport map is not unique. However, \cite{mcann-caff-feldman} constructed a potential Monge map, which transports mass along transport rays. These transport rays were first introduced by \cite{evans1999differential} and are defined as line segments connecting points in the supporting domains of the marginal measures. Along these rays, the Kantorovich potentials decrease linearly. 

The work in \cite{evans1999differential} shows that the union of all transport rays contains the union of the supporting domains, and further, these rays are disjoint (see Section~\ref{sec:l1-ot}). Moreover, optimal transport plans with respect to the norm cost are supported on these transport rays (see Lemma~\ref{lem:support-lemma} for proof). Thus, to establish Theorem~\ref{thm:Theorem1}, it suffices to investigate how $\pi^\varepsilon$, the entropic optimal transport plan, aligns with the transport rays as $\varepsilon \to 0$. 
For a transport ray $\mathcal{R}$, consider cylinders $\mathfrak{C}\subset \mathcal{X}$ and $\mathfrak{C}'\subset \mathcal{Y}$ of radius $\omega\ll 1$ around the segments $\vec{aa'}\subset \mathcal{R}\cap \mathcal{X}$ and $\vec{dd'}\subset \mathcal{R}\cap \mathcal{Y}$ respectively. We show that 
 \begin{align}
     \pi^{\varepsilon}(\mathfrak{C}\times \mathfrak{C}')\sim \omega^{d-1} \int_{\vec{aa'}\times \vec{dd'}} d\tilde{\pi}^{\mathrm{opt}}_{\llcorner \mathcal{R}}(x,y), \qquad \text{as }\varepsilon \to 0. \label{eq:pi_pesilon_limit}
 \end{align}
Here $\tilde{\pi}^{\mathrm{opt}}_{\llcorner \mathcal{R}}$ is same as in Theorem~\ref{thm:Theorem1}. Now the final result follows by the weak convergence of $\pi^{\varepsilon}$ to $\tilde{\pi}^{\mathrm{opt}}$ as $\epsilon\to 0$ and further, letting $\omega \to 0$. Now we explain how we obtain the above approximation as $\varepsilon \to 0$. Since $\tilde{\pi}^{\mathrm{opt}}_{\llcorner \mathcal{R}}$ solves an entropic optimal transport problem between $\mu_{\llcorner \mathcal{R}}$ and $ \nu_{\llcorner \mathcal{R}}$ as revealed in \eqref{eq:Characterization}, we know 
$$\frac{d\tilde{\pi}^{\mathrm{opt}}_{\llcorner \mathcal{R}}}{d(\mu_{\llcorner \mathcal{R}}\otimes \nu_{\llcorner \mathcal{R}})}(x,y)  = (2\pi \|x-y\|)^{\frac{d-1}{2}}e^{c\|x-y\|}e^{\mathfrak{f}(x)+ \mathfrak{g}(y)}$$
 where $\mathfrak{f}:\mathcal{R}\cap\mathcal{X}\to \mathbb{R}$ and $\mathfrak{g}:\mathcal{R}\cap\mathcal{X}\to \mathbb{R}$ are the potential functions of the entropic optimal transport problem in \eqref{eq:Characterization}. We explain here how we obtain the form of the Radon-Nikodym derivative in the above display from the density of $\pi^{\varepsilon}$ with respect to $\mu\otimes \nu$, i.e.,  $$\frac{d\pi^{\varepsilon}}{d(\mu\otimes \nu)}(x,y) = \exp\Big(-\frac{\|x-y\|}{\varepsilon} + \frac{(f_{\varepsilon}(x) + g_{\varepsilon}(y))}{\varepsilon}\Big).$$ We define $\tilde{I}(x,y) := \|x-y\| - u(x_{\llcorner \mathcal{R}}) + u(y_{\llcorner \mathcal{R}})$ for all $(x,y)\in \mathcal{X}\times \mathcal{Y}$, $u$ is the Kantorovich potential of the optimal transport plan between $\mu$ and $\nu$, $(f_{\varepsilon}, g_{\varepsilon})$ are entropic optimal transport potential and $(x_{\llcorner \mathcal{R}},y_{\llcorner \mathcal{R}})$ are the orthogonal projection of $(x, y)$ onto $\mathcal{R}$. With $I(x,y)$, we rewrite $$\|x-y\| - (f_{\varepsilon}(x) + g_{\varepsilon}(y)) =I(x,y)-(f_{\varepsilon}(x)- u(x_{\llcorner \mathcal{R}}) + g_{\varepsilon}(y) + u(y_{\llcorner \mathcal{R}})).$$ 
 Along the transport ray $\mathcal{R}$,  $I(x,y)$ vanishes for any $(x,y) \in (\mathcal{R}\cap \mathcal{X})\times (\mathcal{R}\times \mathcal{Y})$. Furthermore, for any $x+\sqrt{\varepsilon}x^{\perp} \in \mathcal{X}$ and $y+ \sqrt{\varepsilon}y^{\perp} \in \mathcal{Y}$ with $(x,y)\in \mathcal{R}$ and $x^{\perp}, y^{\perp}$ are orthogonal to the direction of the transport ray $\mathcal{R}$, $I(\cdot,\cdot)$ expands as 
 $$ I(x+\sqrt{\varepsilon}x^{\perp}, y+ \sqrt{\varepsilon}y^{\perp}) \sim \varepsilon(1+o(1))\frac{\|J(x)x^{\perp} - J(y)y^{\perp}\|^2}{\|x-y\|}$$
where $J(x)$ is the Jacobian matrix of transformation of local coordinates around transport rays. A detailed discussion of this expansion can be found in Lemma~\ref{lemma:Main}. As $\varepsilon \to 0$, the entropic optimal transport potentials expand as 
\begin{align*}
\frac{f_{\varepsilon} (x+\sqrt{\varepsilon}x^{\perp}) - u(x)}{\varepsilon} &= \mathfrak{f}(x)+\frac{1}{\sqrt{\varepsilon}}(1+o(1)) \langle v_{\varepsilon}, J(x) x^{\perp} \rangle, \\
\frac{g_{\varepsilon}(x+\sqrt{\varepsilon}x^{\perp}) + u(x)}{\varepsilon} &= \mathfrak{g}(y)-\frac{1}{\sqrt{\varepsilon}}(1+o(1)) \langle v_{\varepsilon}, J(y) y^{\perp} \rangle 
\end{align*}
for some $v_{\varepsilon} \in \mathbb{R}^d$. Combining these expansion of $I(\cdot, \cdot)$ and $(f_{\varepsilon}, g_{\varepsilon})$ yields 
\begin{align}
\frac{d\pi^{\varepsilon}}{d(\mu\otimes \nu)}(x,y) &= \exp\Big(-(1+o(1))\frac{\|J(x)x^{\perp} - J(y)y^{\perp}\|^2}{\|x-y\|} \\&+\frac{1}{\sqrt{\varepsilon}}(1+o(1)) \langle v, J(x) x^{\perp} - J(y) y^{\perp}\rangle \Big) \exp(\mathfrak{f}(x)+ \mathfrak{g}(y)).
\end{align}
Furthermore, as $\varepsilon \to 0$, $v_{\varepsilon}/\sqrt{\varepsilon}$ converges. See Theorem~\ref{lem:Deri_Conv} for its proof. Notice that the above reductions show that the Radon-Nikodym derivative of $\pi^{\varepsilon}$ with respect to $\mu\otimes \nu$ locally behaves as a multivariate Gaussian distribution with respect to $x^{\perp}$ and $y^{\perp}$. Integrating this density around a local neighborhood of $\mathcal{R}$ of width $\omega$ shows
$$ \pi^{\varepsilon}(\mathfrak{C}\times \mathfrak{C}')\sim \omega^{d-1} \int_{\vec{aa'}\times \vec{dd'}}(2\pi \|x-y\|)^{\frac{d-1}{2}}e^{c\|x-y\|}e^{\mathfrak{f}(x)+ \mathfrak{g}(y)} d(\mu_{\llcorner \mathcal{R}}\otimes \nu_{\llcorner \mathcal{R}})(x,y).$$
Here, the constant $c$ is equal to $\lim_{\varepsilon\to 0} \|v_{\varepsilon}\|^2/\varepsilon.$ The proof of the above display is primarily contained in Lemma~\ref{lemma:Main}, where a more general result is established. The proof of Lemma~\ref{lemma:Main} relies on a multi-scale argument, distinguishing between contributions from regions near the transport ray and those farther away. Specifically, the contribution from a close neighborhood of the transport ray is analyzed using the expansion of the entropic optimal transport potential, as explained earlier. In contrast, the contribution from regions farther from the transport ray is estimated using Lemma~\ref{claim:lower bound}, which serves as a critical component of the proof of Lemma~\ref{lemma:Main}.
By proving the above display via Lemma~\ref{lemma:Main}, we establish the validity of \eqref{eq:pi_pesilon_limit}, thereby completing the proof of Theorem~\ref{thm:Theorem1}.

\subsection{Background}
The Monge-Kantorovich problem arises as a relaxation of the Monge problem, first proposed by Monge in 1781 \cite{Monge}. Given probability measures \(\mu, \nu \in \mathcal{P}(\R^d)\), the goal of the Monge problem is to find an optimal transport map \(T\) minimizing
\begin{align}\label{eqn:monge-problem}
 \inf \int_{\R^d} \|x - T(x)\| \, \mathrm{d}\mu(x),
\end{align}
under the constraint that \(T\) preserves mass, i.e., \(T_{\#}\mu = \nu\). 
While simple examples (e.g., when \(\mu\) is a Dirac measure and \(\nu\) is the standard Gaussian) show that \eqref{eqn:monge-problem} does not always admit a minimizer, Monge's insight introduced the concept of transport rays, which became central to subsequent developments. Sudakov \cite{Sudakov} was the first to attempt solving the Monge problem by decomposing \(\R^d\) into affine regions where the transport problem could be solved explicitly, and then gluing these solutions to construct an optimal transport map. However, this approach relied on the incorrect assumption that the absolute continuity of \(\mu\) with respect to the Lebesgue measure $\mathcal{L}^d$ implied the absolute continuity of the measures induced by the decomposition (cf. Section 6 in \cite{lecture-notes-ambrosio}). Several years later, Evans and Gangbo \cite{evans-gangbo} introduced new analytical tools and resolved the Monge problem under the assumptions \(\mu, \nu \ll \mathcal{L}^d\) with Lipschitz densities and \(\operatorname{supp} \mu \cap \operatorname{supp} \nu = \emptyset\). Subsequently, Caffarelli--Feldman--McCann \cite{mcann-caff-feldman} and Trudinger--Wang \cite{trudinger-wang} independently resolved the Monge problem for absolutely continuous measures with compact support in \(\R^d\).  Ambrosio \cite{lecture-notes-ambrosio} later established the existence of an optimal transport map under the very general assumption \(\mu \ll \mathcal{L}^d\) without requiring additional regularity assumptions. Furthermore, Feldman and McCann \cite{manifold} extended the results of \cite{mcann-caff-feldman} and \cite{trudinger-wang} to the Riemannian setting. On the other hand, when the cost function is squared distance, the seminal works of Brenier \cite{brenier} and McCann \cite{McCann} provide a complete characterization of the transport map as the gradient of a convex potential. Later, this result was generalized for any strictly convex cost function by Gangbo and McCann \cite{Gangbo&McCann}.


In recent years, optimal transport has garnered significant attention for its computational applications in fields such as statistics, machine learning, and economics. For a comprehensive overview, see \cite{peyré2020computationaloptimaltransport}. Among various computational approaches, the entropic regularization method popularized by Cuturi \cite{Cuturi} has become a particularly effective tool for approximating optimal transport plans efficiently, especially in large-scale settings. By incorporating a relative entropy term into the classical Kantorovich formulation, this method reformulates the problem into a computationally tractable form. It enables the use of scalable iterative algorithms, such as Sinkhorn’s algorithm, which are widely adopted due to their robustness and efficiency in high-dimensional applications. The statistical analysis of entropic optimal transport and its applications in machine learning and statistical inference has emerged as a vibrant research direction, leading to a broad range of developments—\cite{pooladian2021,gonzalez2022,divol2024}, to name a few.

Beyond its computational applications, entropic optimal transport has been the focus of significant theoretical developments. While it is not possible to list all contributions in this rapidly growing field, we highlight a few results that are particularly relevant to our work. Gamma-convergence techniques have demonstrated that weak limits of entropic optimizers correspond to solutions of the classical Monge-Kantorovich problem \cite{Christian, carlier2017convergence}. This result has been extended to more general settings with non-integrable cost function via $(c,\varepsilon)$-cyclical invariance in \cite{entropic-cyc-ld} where large deviation principles have been used to quantify convergence rates. Furthermore, asymptotic expansions of the entropic cost as \(\varepsilon \to 0\) have been explored in \cite{pal2024difference,conforti2021formula}, while \cite{potentials} examined the convergence of Schrödinger potentials in Polish spaces. Regarding stability, \cite{stability} established the robustness of entropically regularized solutions under variations in both the marginals and the cost function. We refer to \cite{marcel-notes, Chewi2024statisticalOT} for more details.


Despite significant progress, understanding the precise behavior of entropically regularized couplings as \(\varepsilon \to 0\) remains a challenging open problem, particularly when the cost function lacks strict convexity. This question has been partially addressed in the discrete setting by \cite[Proposition~4.1]{peyré2020computationaloptimaltransport}, where the authors demonstrated that the limit of the entropic optimal transport maximizes entropy among all optimal transport couplings.  

In the continuous setting, this question has been resolved only in one dimension with the linear cost function \(c(x,y) = |x-y|\). Marino and Louet \cite{Marino} provided a unique characterization of the limiting optimal transport coupling as the solution to a variational problem. However, extending this result to higher dimensions \(d > 1\) has remained an open problem. In this work, we address this gap by establishing a variational 
characterization of the limiting coupling in
our main result, Theorem~\ref{thm:Theorem1}.  

 Entropic optimal transport is also known as the (static) Schr\"odinger bridge problem. The Schrödinger problem explores the most likely evolution of a particle system in thermal equilibrium, given a reference law \( R \) for its configuration at initial and final times \( t_0 \) and \( t_1 \), and observed marginals \( (\mu, \nu) \) that differ from those of \( R \). The solution minimizes the relative entropy \( H(\cdot | R) \) over all couplings \( \Gamma(\mu, \nu) \), providing the optimal joint law \( \pi^* \). The minimization of \( H(\cdot | R) \) over \( \Gamma(\mu, \nu) \) aligns with the entropic optimal transport problem when the cost function \( c \) is defined as \( c := -\varepsilon \log(\alpha^{-1} dR/d(\mu \otimes \nu)) \), where \( \varepsilon > 0 \) and \( \alpha \) is a normalizing constant. 
 While this is the static formulation, the dynamic Schr\"odinger bridge problem extends this framework to model the continuous evolution of the particle system over \( t \in [t_0, t_1] \), with the static problem serving as a projection to the marginals. For detailed surveys and historical context see \cite{marcel-notes}.

A large deviations principle arises in the small-noise or small-time limit, where the reference measure \( R(\varepsilon) \) collapses to a deterministic coupling, mapping a particle at \( x \) to a fixed destination \( f(x) \) \cite{entropic-cyc-ld}. Theorem~\ref{thm:Theorem1} extends this analysis by characterizing the small-noise limit of the static Schrödinger bridge problem when the cost function \( c \) corresponds to the distance cost. Investigating whether this principle can be generalized to the dynamic Schrödinger bridge problem on the path space remains an intriguing open question.


A fundamental question in the study of any regularization scheme in the calculus of variations is understanding the behavior of minimizers as the regularization parameter tends to zero. This question resonates across many areas of mathematics, where regularization often plays a crucial role in selecting meaningful solutions. In fluid dynamics, the inviscid limit problem (cf. \cite{vicol}) investigates the asymptotic behavior of solutions to the Navier-Stokes equations as the viscosity parameter approaches zero. In differential geometry, the seminal work of Sacks and Uhlenbeck \cite{Sacks} introduced a perturbed Dirichlet energy to construct minimal immersed spheres, illustrating how regularization schemes can lead to geometrically significant minimizers. Similarly, in game theory, recent works \cite{Hajek2019NonuniqueMFG, bayraktar2020non} have shown that in cases where mean field games exhibit non-unique Nash equilibria, the equilibria that arise as limits of finite $N$ Nash equilibria, as $N$ tends to infinity, correspond precisely to the entropy solution. See also \cite{Carlier-Blanchet} for the connection between Cournot-Nash equilibria and entropic optimal transport. Building on this foundation, our work contributes to the broader research landscape by providing a rigorous variational characterization of the limiting coupling \(\Tilde{\pi}_{\text{opt}}\) as \(\varepsilon \to 0\). This addresses fundamental theoretical questions within the framework of the entropic Monge-Kantorovich problem, advancing the understanding of how entropic regularization selects specific optimal transport plans.




\subsection{Structure of the Paper}
The remainder of this paper is organized as follows. In Section \ref{sec:preliminaries}, we provide the necessary preliminaries, including the formulation of the entropic optimal transport problem and its connection to the classical Monge-Kantorovich problem. We also review key results on transport rays and define the restriction of measures to these rays. In Section \ref{sec:proof-main}, we prove Theorem \ref{thm:Theorem1}, by establishing a variational characterization of the limiting coupling $\Tilde{\pi}_{\text{opt}}$ as the entropic regularization parameter $\varepsilon \to 0$. Finally, Section \ref{sec:eot} examines the asymptotic behavior of entropic potentials around transport rays. Through precise expansions, we derive their limiting form, which plays a crucial role in the proof of our main result.

\section*{Acknowledgments}
The authors thank Soumik Pal for his thoughtful feedback on an earlier draft of this manuscript. SA gratefully acknowledges insightful discussions with Luigi Ambrosio regarding this problem during a 2023 conference at ETH Zürich held in his honor. PG's research was partially supported by NSF grant DMS-2515172 (originally DMS-2153661). PG is also grateful to Marcel Nutz for informing the authors, following the initial ArXiv posting of this paper, that he and his collaborator Chenyang Zhong were independently finalizing a manuscript containing similar results as us and for bringing to PG's attention a lecture video from a few years ago addressing a related problem.

\section{Preliminaries}\label{sec:preliminaries}
Recall that \( \mathcal{X}, \mathcal{Y} \subset \mathbb{R}^d \), and let \( \mathcal{P}(\mathcal{X}) \) denote the space of all probability measures supported on \( \mathcal{X} \). Additionally, recall that \( \mu \in \mathcal{P}(\mathcal{X}) \) and \( \nu \in \mathcal{P}(\mathcal{Y}) \). We now outline the following assumptions on the measure spaces \((\mathcal{X}, \mu)\) and \((\mathcal{Y}, \nu)\).
\begin{enumerate}
    \item $\mathcal{X}$ and $\mathcal{Y}$ are compact subsets of $\mathbb{R}^d$ that are quantitatively disjoint, i.e. $\mathcal{X}\cap \mathcal{Y} = \emptyset$ and $\operatorname{dist}(\mathcal{X},\mathcal{Y})>c_0$ for some positive constant $c_0>0.$
    \item $\mu(\mathrm{d}x) =p(x)\mathrm{d}x$ and $\nu(\mathrm{d}y) = q(y)\mathrm{d}y$ where $p$ and $q$ are smooth densities bounded away from $0$ and $+\infty$. 
\end{enumerate}
\subsection{Dual Formulation}
Let $v_0$ be the optimal transport cost function in \eqref{eqn:kant-l1}. 
Given $\varepsilon>0$, let $v_{\varepsilon}$ be the entropic optimal transport cost function in \eqref{defn:pi_eps}.
Alternatively, $v_0$ and $v_\varepsilon$ can be characterized using a dual formulation
\begin{align}\label{defn:dual-v0}
v_0 &:=\sup _{\substack{f \in \operatorname{Lip}_1\left(\mathcal{X}\right) \\ g \in \operatorname{Lip}_1\left(\mathcal{Y}\right)}}\left\{\int_{\mathcal{X}} f \ \mathrm{d} \mu+\int_{\mathcal{Y}} g \ \mathrm{d} \nu: f(x)+g(y) \leq \|x-y\|, \forall (x,y)\in \mathcal{X}\times \mathcal{Y}\right\},
\end{align}
and 
\begin{align}\label{defn:dual-v_eps}
v_\varepsilon := \sup _{\substack{f_{\varepsilon} \in L^1_\mu\left(\mathcal{X}\right) \\ g_{\varepsilon} \in L^1_\nu\left(\mathcal{Y}\right)}}\left\{\int_{\mathcal{X}} f_{\varepsilon} \ \mathrm{d} \mu +\int_{\mathcal{Y}} g_{\varepsilon} \ \mathrm{d} \nu-\varepsilon \log \int_{\mathcal{X} \times \mathcal{Y}} e^{\frac{f_{\varepsilon}(x)+g_{\varepsilon}(y)-\|x-y\|}{\varepsilon}} \ \mathrm{d} \mu(x) \mathrm{d} \nu(y)\right\}
\end{align}
where $\operatorname{Lip}_1(\mathcal{X})$ denotes the space of all real-valued Lipschitz functions on $\mathcal{X}$ with Lipschitz constant $\leq 1$ and $L^1_\mu(\mathcal{X})$ denotes the space of real-valued integrable functions with respect to the measure $\mu$ on $\mathcal{X}.$ Any pair  $(f_K,g_K)$ minimizing \eqref{defn:dual-v0} is called a Kantorovich potential whereas the pair $(f_{\varepsilon}, g_{\varepsilon})$ minimizing \eqref{defn:dual-v_eps} is called an entropic optimal transport (EOT) potential. 
\subsection{Transport Rays}\label{sec:l1-ot}
We present some relevant facts about transport rays following \cite{mcann-caff-feldman}. There exists $u\in \operatorname{Lip}_1(\R^d)$ such that the Kantorovich potential $(f_K,g_K)$ in \eqref{defn:dual-v0} can be written as
\begin{align}
    u(x)=-f_K(x)\text{ on }\mathcal{X},\quad u(y)=g_K(y) \text{ on }\mathcal{Y}.
\end{align}
 A transport ray $\mathcal{R}$ is a segment with endpoints $a$, $b \in \mathbb{R}^d$ such that
\begin{enumerate}
     \item $a \in \mathcal{X}, b \in \mathcal{Y}, a \neq b$,
     \item $u(a)-u(b)=\|a-b\|$, and 
     \item for any $t>0$ the ray $a_t:=a+t(a-b) \in \mathcal{X}$ satisfies $|u(a_t)-u(b)|<\|a_t-b\|$ and $b_t = b+t(b-a)\in \mathcal{Y}$ satisfies $|u(b_t)-u(a)|< \|b_t-a\|.$
 \end{enumerate}
The points $a$ and $b$ are called the upper and lower ends of $\mathcal{R}$, respectively. Let $T_1$ be the set of all points that reside on transport rays and define its complementary set $T_0$, often called the rays of length zero, by
$$
T_0:=\left\{z \in \mathcal{X} \cap \mathcal{Y}:\left|u(z)-u\left(z^{\prime}\right)\right|<\left\|z-z^{\prime}\right\| \text { for any } z^{\prime} \in \mathcal{X} \cup \mathcal{Y}, z^{\prime} \neq z\right\} .
$$
 Lemma 9 of \cite{mcann-caff-feldman} show that $\mathcal{X} \cup \mathcal{Y} \subseteq T_0 \cup T_1$. Although this result can be derived from \cite{mcann-caff-feldman}, we explicitly demonstrate in Lemma~\ref{lem:support-lemma} of Section~\ref{lem:support-lemma} that the Monge-Kantorovich optimal transport for \eqref{eqn:kant-l1} is supported on the set \(T_1\).

A key tool in our analysis is the existence of a Lipschitz change of coordinates on the covering of the set \(T_1\), as established in Lemma 22 of \cite{mcann-caff-feldman}. To proceed, we first introduce the necessary notation required to state this result, followed by the formal statement itself. 
For any $p \in \mathbb{R}$, let $S_p$ be the level set $\left\{x \in \mathbb{R}^d \mid u(x)=p\right\}$. Lemma 18 of \cite{mcann-caff-feldman} shows that the set
$$
S_p \cap\left\{x \in \mathbb{R}^d \mid u \text { is differentiable at } x \text { and } D u(x) \neq 0\right\}
$$
has a countable covering consisting of Borel sets $S_p^i \subset S_p$, such that for each $i \in \mathbb{N}$ there exist Lipschitz coordinates $U: \mathbb{R}^d \rightarrow \mathbb{R}^{d-1}$ and $V: \mathbb{R}^{d-1} \rightarrow \mathbb{R}^d$ satisfying
$$
V(U(x))=x \text { for all } x \in S_p^i .
$$
Now for any $p\in \mathbb{Q}, i, j \in \mathbb{N}$ define a ray cluster $T_{p i j}:=\bigcup R_z$ as the union of all transport rays $R_z$ which intersect $S_p^i$, and for which the point of intersection $z \in S_p^i$ is separated from both endpoints of the ray by distance greater than $1 / j$ in $\|\cdot\|$. The same cluster, but with ray ends omitted, is denoted by $T_{p i j}^0:=\bigcup_z\left(R_z^0\right)$. Lemma 20~\cite{mcann-caff-feldman} shows that the ray clusters $T_{p i j}$ define a countable covering of all transport rays $T_1$. Moreover, each $T_{p i j}$ and transport ray $R$ satisfy, either $(R)^0 \subset T_{p i j}$ or $(R)^0 \cap T_{p i j}=\emptyset$. Now we recall Lemma 22 of \cite{mcann-caff-feldman}.
\begin{lemma}[Lemma~22 of \cite{mcann-caff-feldman}]\label{lem:lip-change-of-variable}
Each ray cluster $T_{p i j} \subset \mathbb{R}^d$ admits coordinates $G=G_{p i j}: T_{p i j}^0 \rightarrow \mathbb{R}^{d-1} \times \mathbb{R}^1$ with inverse $F=F_{p i j}: G\left(T_{p i j}^0\right) \rightarrow \mathbb{R}^d$ satisfying:
\begin{enumerate}
    \item F extends to a Lipschitz mapping between $\mathbb{R}^{d-1} \times \mathbb{R}^1$ and $\mathbb{R}^d$.
    \item For each $\lambda>0, G$ is Lipschitz on $T_{p i j}^\lambda:=\left\{x \in T_{p i j}^0 \mid\|x-a\|,\|x-b\|>\lambda\right\}$, where $a$ and $b$ denote the endpoints of the (unique) transport ray $R_z$.
    \item $F(G(x))=x$ for each $x \in T_{p i j}^0$.
    \item If a transport ray $R_z \subset T_{p i j}$ intersects $S_p^i$ at $z$, then each interior point $x \in\left(R_z\right)^0$ of the ray satisfies
$$
G(x)=(U(z), u(x)-u(z)),
$$
where $U: \mathbb{R}^d \rightarrow \mathbb{R}^{d-1}$ gives the Lipschitz coordinates on $S_p^i$.
\end{enumerate}
\end{lemma}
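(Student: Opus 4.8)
The statement is Lemma~22 of \cite{mcann-caff-feldman}, so it may simply be cited; the plan for an independent proof would follow that reference, and I outline it here.

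\textbf{Step 1: build the coordinates from the ray structure.} I would start from the fact that every $x\in T_{pij}^0$ lies in the interior of a unique transport ray $R_z$, and that $u$ is affine with unit speed along $R_z$ -- this is forced by property~(2) in the definition of a transport ray together with $u\in\operatorname{Lip}_1(\R^d)$, since writing $x=(1-s)a+sb$ the two bounds $u(a)-u(x)\le\|a-x\|$ and $u(x)-u(b)\le\|x-b\|$ must both be equalities. Hence $R_z$ meets $S_p=\{u=p\}$ in exactly one point $z=z(x)$, which by the definition of $T_{pij}$ lies in $S_p^i$ and is farther than $1/j$ from both ends of $R_z$. I would then define $G(x):=(U(z(x)),\,u(x)-p)$ on $T_{pij}^0$, which makes property~4 hold verbatim (as $u(z)=p$), and $F(\xi,\tau):=V(\xi)+\tau\,Du(V(\xi))$ on $G(T_{pij}^0)$, using that $Du(z)$ is the unit direction of $R_z$ at the interior point $z$ (there $u$ is differentiable with $|Du|=1$) and that $u(z+\tau Du(z))=p+\tau$; the affine-unit-speed property then gives $F(G(x))=x$, i.e.\ property~3.

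\textbf{Step 2: the one quantitative estimate everything rests on.} The crux is the regularity of the direction field $z\mapsto Du(z)$ on the cluster. The elementary input, obtained from $u\in\operatorname{Lip}_1$ by testing $|u(x+t\,Du(x))-u(x')|\le\|x+t\,Du(x)-x'\|$ with $|t|\le\lambda$ together with the symmetric inequality, is the approximate monotonicity
\[
\bigl|\langle x-x',\,Du(x)-Du(x')\rangle\bigr|\le \frac{\|x-x'\|^{2}}{\lambda},
\]
valid for $x,x'$ at distance $>\lambda$ from the ends of their rays. This bound alone does \emph{not} make $z\mapsto Du(z)$ Lipschitz once $d\ge3$ (two rays can be close yet nearly skew), and reconciling this is the main obstacle: one must additionally exploit the \emph{strict} inequality $|u(z)-u(z')|<\|z-z'\|$ off any single ray -- which prevents the interiors of two distinct rays from focusing toward one another within the length $1/j$ available inside the cluster -- together with the transversal Lipschitz charts $U,V$ on $S_p^i$. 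Pushing this through (the content of the lemmas of \cite{mcann-caff-feldman} preceding Lemma~22) yields that $z\mapsto Du(z)$ is Lipschitz on $S_p^i\cap T_{pij}$ and, more generally, that the footpoint map $x\mapsto z(x)$ is Lipschitz on each truncation $T_{pij}^{\lambda}$, with constants depending only on $\lambda$, $j$ and $\diam(\mathcal X\cup\mathcal Y)$.

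\textbf{Step 3: conclude the four properties.} Granting Step~2, the remainder is bookkeeping. All relevant footpoints lie within $\diam(\mathcal X\cup\mathcal Y)$ of $\mathcal X\cup\mathcal Y$ and all relevant $\tau$ lie in the bounded interval $u(\mathcal X\cup\mathcal Y)-p$, so $G(T_{pij}^0)$ is bounded; on it $F$ is a Lipschitz combination of the Lipschitz maps $V$ and $\xi\mapsto Du(V(\xi))$, and extending these (McShane/Kirszbraun) produces the global Lipschitz extension required in property~1. For property~2, on $T_{pij}^{\lambda}$ the second coordinate of $G$ is the $1$-Lipschitz map $x\mapsto u(x)-p$ and the first is $U\circ z(\cdot)$, a composition of Lipschitz maps, so $G$ is Lipschitz there. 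Properties~3 and~4 were already verified in Step~1. Thus, modulo the direction-field estimate of Step~2 -- the genuinely hard point -- the lemma follows.
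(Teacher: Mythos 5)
The paper does not contain its own proof of this statement: it is quoted verbatim as Lemma~22 of \cite{mcann-caff-feldman} and used as black-box external input, so there is no in-paper argument to compare against. You correctly recognize this and offer a reconstruction of the cited proof rather than a new one.

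Your reconstruction is faithful to the structure of the reference and to how the present paper later uses the lemma. The formulas you propose, $G(x)=(U(z(x)),\,u(x)-p)$ and $F(\xi,\tau)=V(\xi)+\tau\,Du(V(\xi))$, are exactly the form the paper assumes in \eqref{eq:F_decompose}, once one notes that at interior points the ray direction $\chi$ equals $Du$ (since $u(a)-u(b)=\|a-b\|$ forces $Du=(a-b)/\|a-b\|=\chi$ there). Your Step~1 argument that $u$ is affine with unit speed along a ray, hence $R_z$ meets $S_p$ at a unique $z(x)$, is the standard consequence of the two saturated Lipschitz inequalities and is correct. Your identification of the Lipschitz regularity of the direction field as the analytic crux is also right: this is precisely the content of Lemma~16 of \cite{mcann-caff-feldman}, which the paper quotes as Lemma~\ref{lem:lip-control}, and your observation that the first-order monotonicity estimate from $u\in\operatorname{Lip}_1$ is not by itself sufficient in $d\ge 3$, and must be combined with strictness off a single ray plus the transversal charts $(U,V)$ on $S_p^i$, is a fair summary of why that lemma is nontrivial. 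Your Step~3 bookkeeping (boundedness of $G(T_{pij}^0)$, Kirszbraun extension of the vector-valued Lipschitz $F$, Lipschitzness of $G$ on truncations $T_{pij}^\lambda$ via $x\mapsto u(x)-p$ and $U\circ z(\cdot)$) is correct, using that footpoints in the cluster are $1/j$-separated from ray ends so the direction field has a uniform Lipschitz constant there. In short: the paper proves nothing here, and your sketch is an accurate and honest account of what the cited proof does.
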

We also need a result that controls the separation of transport rays arising from two points on the same level set. Let $\chi: \mathbb{R}^d \rightarrow \mathbb{R}^d$ be a ray direction which is specified as follows. If $z$ is an interior point of a transport ray $R$ with upper and lower endpoints $a, b$, then
$$
\chi(z):=\frac{a-b}{\|a-b\|} .
$$
Define $\chi(z)=0$ for any point $z \in \mathbb{R}^d$ not the interior point of a transport ray. 
\begin{lemma}[Lemma 16~\cite{mcann-caff-feldman}]\label{lem:lip-control}
Let $\mathcal{R}_1$ and $\mathcal{R}_2$ be transport rays, with upper end $a_k$ and lower end $b_k$ for $k=1,2$ respectively. If there are interior points $y_k \in\left(\mathcal{R}_k\right)^0$ where both rays pierce the same level set of Monge's potential $u\left(y_1\right)=u\left(y_2\right)$, then the ray directions satisfy a Lipschitz bound
$$
\left\|\chi\left(y_1\right)-\chi\left(y_2\right)\right\| \leq \frac{1}{\sigma}\left\|y_1-y_2\right\|,
$$
with $\sigma:=\min \left\{\left\|y_k-a_k\right\|,\left\|y_k-b_k\right\|\right\}$ for $k=1,2.$
\end{lemma}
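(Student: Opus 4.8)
The plan is to derive the bound from exactly two applications of the hypothesis that $u \in \operatorname{Lip}_1(\mathbb{R}^d)$, exploiting that $u$ descends at unit speed along each transport ray.

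First I would record the unit-speed fact: if $\mathcal{R}_k$ has upper end $a_k$ and lower end $b_k$, then $u(z) = u(a_k) - \|a_k - z\|$ for every $z$ on the segment $[a_k, b_k]$. This is immediate from the $1$-Lipschitz sandwich $u(a_k) - u(z) \le \|a_k - z\|$ and $u(z) - u(b_k) \le \|z - b_k\|$, whose right-hand sides sum to $\|a_k - z\| + \|z - b_k\| = \|a_k - b_k\| = u(a_k) - u(b_k)$, forcing both inequalities to be equalities. Writing $e_k := \chi(y_k) = (a_k - b_k)/\|a_k - b_k\|$, this says $u(y_k + t e_k) = u(y_k) + t$ whenever $y_k + t e_k$ is still an interior point of $\mathcal{R}_k$; by the definition of $\sigma$, every $|t| < \sigma$ qualifies simultaneously for $k = 1$ and $k = 2$. (This is essentially the ``Kantorovich potentials decrease linearly along rays'' fact recalled in Section~\ref{sec:l1-ot}.)

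Next, for $t, r \in (-\sigma, \sigma)$ I would apply the $1$-Lipschitz bound to the interior points $y_1 + t e_1 \in \mathcal{R}_1$ and $y_2 + r e_2 \in \mathcal{R}_2$. With $p := u(y_1) = u(y_2)$ and $\delta := y_1 - y_2$ this gives $|t - r| = |u(y_1 + t e_1) - u(y_2 + r e_2)| \le \|\delta + t e_1 - r e_2\|$. Squaring and simplifying with $\|e_1\| = \|e_2\| = 1$ and $\langle e_1, e_2\rangle - 1 = -\tfrac12\|e_1 - e_2\|^2$ reduces this to $-tr\,\|e_1 - e_2\|^2 \le \|\delta\|^2 + 2t\langle\delta, e_1\rangle - 2r\langle\delta, e_2\rangle$. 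The crucial manoeuvre is then to set $r = -t$ — this makes $-tr = t^2 \ge 0$ and collapses the right-hand side to $\|\delta\|^2 + 2t\langle\delta, e_1 + e_2\rangle$ — and to choose the sign of $t$ so that $t\langle\delta, e_1 + e_2\rangle \le 0$. What remains is $t^2\|e_1 - e_2\|^2 \le \|\delta\|^2$ for every $|t| < \sigma$; letting $|t| \uparrow \sigma$ yields $\sigma^2\|\chi(y_1) - \chi(y_2)\|^2 \le \|y_1 - y_2\|^2$, which is the assertion.

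I do not expect a genuine obstacle here; the argument is short. The one place that needs attention is the sign bookkeeping in the last step: using antisymmetric displacements $(t, -t)$ on the two rays is what doubles the $u$-gap and produces the favorable sign on the coefficient of $\|e_1 - e_2\|^2$, while the remaining freedom in the sign of $t$ is exactly what is needed to discard the otherwise-uncontrolled cross term $\langle\delta, e_1 + e_2\rangle$. One must also keep all test points inside the open rays throughout, which is precisely what the quantity $\sigma$ is designed to guarantee.
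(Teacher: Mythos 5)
Your proof is correct. Note, however, that the paper gives no proof of this statement at all — Lemma~\ref{lem:lip-control} is imported verbatim as Lemma~16 of \cite{mcann-caff-feldman}, so there is no in-paper argument to compare against. Your derivation is essentially the standard one from that reference: exploit that $u$ descends linearly at unit speed along each ray (which you correctly deduce from the $1$-Lipschitz sandwich being forced to saturate), apply the global $1$-Lipschitz bound to a pair of antisymmetrically displaced test points $y_1 + t e_1$ and $y_2 - t e_2$, and then use the residual sign freedom in $t$ to kill the cross term $\langle \delta, e_1 + e_2\rangle$, leaving $t^2 \|e_1 - e_2\|^2 \le \|\delta\|^2$ for all $|t| < \sigma$. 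The algebra checks out (in particular the identity $-2tr\,(1 - \langle e_1,e_2\rangle) = -tr\,\|e_1-e_2\|^2$ and the role of $\sigma$ in keeping both test points interior), and letting $|t| \uparrow \sigma$ gives the stated Lipschitz bound on $\chi$.
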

\subsection{Restriction of Measures to Transport Rays}\label{sec:Restriction}
In this section, we formally define the restriction of a probability measure to some transport ray, henceforth denoted by $\mathcal{R}$ with endpoint $[a,b]$ for some $a,b\in \R^{d}.$
\begin{definition}[Blow-Up around a line]
 We define $\delta$-neighbourhood of $\mathcal{R}$ by 
\begin{align}
    B_\delta(\mathcal{R}) := \{z\in \R^{d}: \operatorname{dist}(z,R)\leq \delta\}.
\end{align}
\end{definition}
\begin{definition}[Measure Restricted to a Slice]\label{defn:slice-mu}
Denote the (interior) slice of $\mathcal{R}$ by $S^\mu_{\mathcal{R}}= \operatorname{spt} (\mu) \cap \mathcal{R}^\circ$. For $I\subset  S^\mu_{\mathcal{R}}$, we define the sliced probability measure $\mu_{\llcorner \mathcal{R}}\in \mathcal{P}(\R)$ as the following weak limit
\begin{align*}
  \mu_{\llcorner \mathcal{R}}(I):=\lim _{\delta \rightarrow 0} \frac{\mu\left(B_\delta(I)\right)}{\mu\left(B_\delta(S^\mu_\mathcal{R})\right)}.
\end{align*}
We define $\nu_{\llcorner \mathcal{R}}\in \mathcal{P}(\R)$ and $S^\nu_\mathcal{R}$ in a similar manner.
\end{definition}
We now generalize this definition to the coupling $\pi\in \Gamma(\mu,\nu).$
\begin{definition}\label{defn:slice-pi}
Let $U\times V \subset S^\mu_{\mathcal{R}}\times S^\nu_{\mathcal{R}}$ be a Borel subset of the transport ray $\mathcal{R}$, then  
    \begin{align*}
        \pi_{\llcorner \mathcal{R}}(U\times V) = \lim_{\delta\to 0} \frac{\pi(B_\delta(U)\times B_\delta(V))}{\pi(B_\delta(S^\mu_\mathcal{R})\times B_\delta(S^\nu_\mathcal{R}))}.
    \end{align*}
\end{definition}
\begin{lemma}[Lemma 3.2 in \cite{stability}]\label{lem:stability-entropic}
Let $\pi \in \Gamma(\mu,\nu)$. Consider for $x \in \operatorname{spt} \mu = \mathcal{X} $ and $r>0$. Define $\pi_x^{(r)} \in \mathcal{P}({\mathcal{Y}})$ as
$$
\pi_x^{(r)}(C)=\frac{\pi\left(B_r(x) \times C\right)}{\mu\left(B_r(x)\right)}, \quad C \in \mathcal{B}({\mathcal{Y}}),
$$
where $\mathcal{B}(\mathcal{Y})$ is the space of all Borel sets of $\mathcal{Y}.$ Then, there exists ${X}_0 \subset \mathcal{X}$ with $\mu\left({X}_0\right)=1$ such that for all $x \in {X}_0$, the weak limit
\begin{align}\label{defn:pi_x}
\pi_x:=\lim _{r \rightarrow 0} \pi_x^{(r)}    
\end{align}
exists. Moreover, $\pi_x$ defines a regular conditional probability of $\pi$ given $x$.
\end{lemma}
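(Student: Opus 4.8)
The plan is to combine the disintegration (regular conditional probability) theorem with a Besicovitch-type differentiation argument. First I would take $(\pi_x)_{x\in\mathcal{X}}$ to be a disintegration kernel for $\pi$ with respect to the first coordinate: since $\mathcal{Y}$ is a compact metric space and $(\operatorname{proj}_X)_\#\pi=\mu$, the disintegration theorem yields a $\mu$-a.e.\ uniquely determined Borel family $(\pi_x)_{x\in\mathcal{X}}\subset\mathcal{P}(\mathcal{Y})$ with $\int_{\mathcal{X}\times\mathcal{Y}} h\,\mathrm{d}\pi=\int_{\mathcal{X}}\bigl(\int_{\mathcal{Y}}h(x,y)\,\mathrm{d}\pi_x(y)\bigr)\mathrm{d}\mu(x)$ for all bounded Borel $h$. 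By construction this family is a regular conditional probability of $\pi$ given the first coordinate, so once I establish that $\pi_x^{(r)}\rightharpoonup\pi_x$ for $\mu$-a.e.\ $x$, both the existence of the limit (with this choice of representative) and the ``moreover'' assertion follow at once; it therefore remains only to prove the weak convergence.

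For the convergence, I would fix $\phi\in C(\mathcal{Y})$ and introduce the finite signed measure $\lambda_\phi$ on $\mathcal{X}$ given by $\lambda_\phi(A):=\int_{A\times\mathcal{Y}}\phi(y)\,\mathrm{d}\pi(x,y)$. The disintegration identity gives $\lambda_\phi\ll\mu$ with $\mathrm{d}\lambda_\phi/\mathrm{d}\mu(x)=\int_{\mathcal{Y}}\phi\,\mathrm{d}\pi_x$, and since $\mu$ is a finite Borel (hence Radon) measure on $\mathbb{R}^d$, the Besicovitch differentiation theorem yields, for $\mu$-a.e.\ $x$,
\[
\int_{\mathcal{Y}}\phi\,\mathrm{d}\pi_x^{(r)}=\frac{\lambda_\phi(B_r(x))}{\mu(B_r(x))}\;\xrightarrow[r\to0]{}\;\int_{\mathcal{Y}}\phi\,\mathrm{d}\pi_x .
\]
For $x\in\operatorname{spt}\mu$ the ratio on the left is well defined for every $r>0$, and $\mu(\operatorname{spt}\mu)=1$, so this is harmless. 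One could alternatively bypass Besicovitch and invoke the classical Lebesgue differentiation theorem, exploiting that in our setting $\mu=p\,\mathrm{d}x$ with $p$ bounded above and below.

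The last step is to upgrade this test-function-by-test-function convergence to genuine weak convergence on a single full-measure set. As $\mathcal{Y}$ is compact metric, $C(\mathcal{Y})$ is separable; I would pick a countable dense family $\{\phi_k\}_{k\ge1}$ and let $X_0\subset\operatorname{spt}\mu$ be the intersection of the full-measure set from the disintegration step with the countably many full-measure sets obtained above for each $\phi_k$, so that $\mu(X_0)=1$. For $x\in X_0$, a standard $\varepsilon/3$ approximation — approximating an arbitrary $\phi\in C(\mathcal{Y})$ in sup-norm by some $\phi_k$ and using that $\pi_x^{(r)}$ and $\pi_x$ are probability measures — gives $\int_{\mathcal{Y}}\phi\,\mathrm{d}\pi_x^{(r)}\to\int_{\mathcal{Y}}\phi\,\mathrm{d}\pi_x$ for every $\phi\in C(\mathcal{Y})=C_b(\mathcal{Y})$, i.e.\ $\pi_x^{(r)}\rightharpoonup\pi_x$. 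The only genuine analytic input is the Besicovitch differentiation theorem; the one point requiring care is that the exceptional null set in that step depends on $\phi$, which is exactly why separability of $C(\mathcal{Y})$ is used, to produce a single $X_0$ valid simultaneously for all test functions. Everything else is bookkeeping.
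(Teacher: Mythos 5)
This lemma is stated in the paper as a citation (``Lemma 3.2 in \cite{stability}'') and the paper does not supply a proof of its own; there is therefore no internal argument to compare against. Your proposal is the standard and correct proof of the cited result: disintegrate $\pi$ along the first marginal to obtain $(\pi_x)_x$, note that for each $\phi\in C(\mathcal Y)$ the signed measure $A\mapsto\int_{A\times\mathcal Y}\phi\,\mathrm d\pi$ is absolutely continuous with respect to $\mu$ with density $x\mapsto\int\phi\,\mathrm d\pi_x$, apply the Besicovitch (or, here, Lebesgue) differentiation theorem to get $\int\phi\,\mathrm d\pi_x^{(r)}\to\int\phi\,\mathrm d\pi_x$ for $\mu$-a.e.\ $x$, and use separability of $C(\mathcal Y)$ together with the fact that $\pi_x^{(r)}$ and $\pi_x$ are probability measures to upgrade to a single full-measure set and to all test functions. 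You correctly flag the one subtle point (the exceptional set depends on $\phi$, hence the need for a countable dense family), and your observation that the general Besicovitch theorem can be replaced by Lebesgue differentiation here, because $\mu=p\,\mathrm dx$ with $p$ bounded above and below, is a legitimate simplification in the paper's setting. This is, to the best of my knowledge, essentially the argument given in the cited reference as well, so there is no meaningful divergence to report.
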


\section{Proof of Theorem~\ref{thm:Theorem1}}\label{sec:proof-main}
By Proposition~3.2 of \cite{entropic-cyc-ld}, as $\varepsilon \to 0$,  any subsequential limit of $\pi^{\varepsilon}$ is an optimal transport coupling. Suppose $\Tilde{\pi}^{\mathrm{opt}}$ is one such weak limit. We first need to show the \emph{restriction} of $\Tilde{\pi}^{\mathrm{opt}}$ on the transport ray, i.e.,   $\Tilde{\pi}^{\mathrm{opt}}_{\llcorner \mathcal{R}}$ is absolutely continuous with respect to the product measure $\mu_{\llcorner \mathcal{R}} \otimes \nu_{\llcorner \mathcal{R}}$. 
Furthermore, to complete the proof, we need to show that the Radon-Nikodym derivative of the sliced probability measure $\Tilde{\pi}^{\mathrm{opt}}_{\llcorner \mathcal{R}}$ with respect to $\mu_{\llcorner \mathcal{R}}\otimes \nu_{\llcorner \mathcal{R}}$ takes the following form 
\[\frac{d\Tilde{\pi}^{\text{opt}}_{\llcorner \mathcal{R}}}{d(\mu_{\llcorner \mathcal{R}} \otimes \nu_{\llcorner \mathcal{R}})}(x,y) = (2\pi \|x-y\|)^{\frac{d-1}{2}} e^{c\|x-y\|} e^{\mathfrak{f}(x)+ \mathfrak{g}(y)}, \qquad (x,y) \in  \mathcal{X}_{\llcorner \mathcal{R}}\times \mathcal{Y}_{\llcorner \mathcal{R}}\]
where $\mathfrak{f}: \mathcal{X}_{\llcorner \mathcal{R}}\to \mathbb{R}$ and $\mathfrak{g}: \mathcal{Y}_{\llcorner \mathcal{R}}\to \mathbb{R}$ are uniquely determined (up to some additive constants) by the following relations:
\begin{align}
    \mathfrak{f}(x) &+ \log \Big(\int_{S^\nu_\mathcal{R}}(2\pi \|x-y\|)^{\frac{d-1}{2}} e^{c\|x-y\|} e^{\mathfrak{g}(y)} \mathrm{d}\nu_{\llcorner \mathcal{R}}\Big) = 0, \label{eq:mathfrak_f}\\
   \mathfrak{g}(y) &+ \log \Big(\int_{S^\mu_\mathcal{R}}(2\pi \|x-y\|)^{\frac{d-1}{2}} e^{c\|x-y\|} e^{\mathfrak{f}(x)} \mathrm{d}\mu_{\llcorner \mathcal{R}}\Big) = 0. \label{eq:mathfrak_g}
\end{align}
We will prove this using the following lemma, but first, we introduce the necessary notation.

Recall that $\{S^{i}_p\}_i$ is the Borel cover of the level sets $S_p= \{x\in \mathbb{R}^{d}| u(x) = p\}$ and $T^0_{pij}$ be the cluster of transport rays which intersect $S^i_p$ at some point $z\in S^i_p$.  Consider a partial covering of $S^i_p \cap \mathcal{X}$ by disjoint intrinsic balls $\{B_{\delta}(x_k)\}_{k=1}^{\ell}$ where $x_k\in S^i_p$ is a point of intersection of some transport ray $\mathcal{R}_{x_k}$ and $S^i_p$. We can further write 
$Z: = (S_p^i \cap \mathcal{X})\Delta \bigcup_{k=1}^{\ell} B_\delta(x_k)$ for some $\ell = \ell(\delta)\in \mathbb{N}$ where $\mu_{\llcorner (S^{i}_p \cap \mathcal{X})}(Z) \to 0$ as $\delta \to 0$ and $x_k \in S^i_p$ for $k\geq 1$. For any transport ray $\mathcal{R}$, let $\mathfrak{S}^{\mu}_{\mathcal{R}}$ and $\mathfrak{S}^{\nu}_{\mathcal{R}}$ be the segments of $\mathcal{R}$ confined within $\mathcal{X}$ and $\mathcal{Y}$ respectively.

\begin{lemma}\label{lemma:Main}
 Fix a cluster of rays $T^0_{pij}$ for some $p \in \mathbb{Q}$, $i,j \in \mathbb{N}$. Set $\delta = \varepsilon^{\frac{1}{2} - \kappa}$ for some $\kappa\ll 1$. Let $O_{pij}$ be an open subset of $(T^0_{pij}\cap \mathcal{X}) \times (T^{0}_{pij}\cap \mathcal{Y}).$
Then, as $\varepsilon \to 0$, 
\begin{align*}
\lim_{\varepsilon\to 0}\left|\pi^{\varepsilon} (O_{pij}) - \sum_{k=1, B_{\delta}(x_k)\subset O_{pij}}^{\ell} \delta^{d -1} \int_{\mathfrak{S}^\mu_{\mathcal{R}_{x_k}}\times \mathfrak{S}^\nu_{\mathcal{R}_{x_k}} \cap O_{pij}} \Pi_{\mathcal{R}_{x_k}}(x,y) d(\mu_{\llcorner {\mathcal{R}_{x_k}}} \otimes \nu_{\llcorner {\mathcal{R}_{x_k}}})  \right|\to 0.
\end{align*}
where $\Pi_{\mathcal{R}_{x_k}}:  \mathfrak{S}^\mu_{\mathcal{R}_{x_k}} \times  \mathfrak{S}^\nu_{\mathcal{R}_{x_k}} \mapsto \mathbb{R}_{\geq 0}$ is defined  as 
$$\Pi_{\mathcal{R}_{x_k}}(x,y):= (2\pi \|x-y\|)^{\frac{d-1}{2}} e^{c\|x-y\|} e^{\mathfrak{f}(x) + \mathfrak{g}(y)} $$
with $(\mathfrak{f},\mathfrak{g})$ being uniquely determined by $\int \Pi_{\mathcal{R}_{x_k}}(x,y) \mathrm{d}\mu_{\llcorner {\mathcal{R}_{x_k}}} (x)= \nu_{\llcorner {\mathcal{R}_{x_k}}}(y)$ and $\int \Pi_{\mathcal{R}_{x_k}}(x,y) \mathrm{d}\nu_{\llcorner {\mathcal{R}_{x_k}}} (y)= \mu_{\llcorner {\mathcal{R}_{x_k}}}(x)$ up to some additive constants. 
\end{lemma}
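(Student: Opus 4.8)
\emph{Proof strategy.} The plan is to localize to a single ray cluster, straighten the transport rays with the Lipschitz change of coordinates of Lemma~\ref{lem:lip-change-of-variable}, and carry out a two‑scale expansion of the density $\frac{d\pi^\varepsilon}{d(\mu\otimes\nu)}(x,y)=\exp\bigl(\tfrac1\varepsilon(f_\varepsilon(x)+g_\varepsilon(y)-\|x-y\|)\bigr)$ around each reference ray $\mathcal{R}_{x_k}$. Since (the weak limit of) $\pi^\varepsilon$ is supported on $T_1$ by Lemma~\ref{lem:support-lemma} and the clusters $T^0_{pij}$ cover $T_1$, it suffices to control $\pi^\varepsilon$ on $O_{pij}$. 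Applying $G=G_{pij}$, a point $x$ of the cluster is described by coordinates $(w,s)\in\mathbb{R}^{d-1}\times\mathbb{R}$, where $w=U(z_x)$ labels the ray through $x$ and $s=u(x)-u(z_x)$ is the position along it; each ray becomes a straight segment $\{w\}\times I_w$, the pair $\mathfrak{S}^\mu_{\mathcal{R}_w}\times\mathfrak{S}^\nu_{\mathcal{R}_w}$ sits over $\{w\}\times\{w\}$, and on a ray $\|F(w,s)-F(w,t)\|=|s-t|$.

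\emph{Reduction to diagonal tubes.} First I partition $(T^0_{pij}\cap\mathcal{X})$ into the disjoint $\delta$‑tubes $\Theta_k:=\{x:\ \|w(x)-w(x_k)\|\le\delta\}\cap\mathcal{X}$ around the rays $\mathcal{R}_{x_k}$, plus the image of the leftover set $Z$; similarly $(T^0_{pij}\cap\mathcal{Y})$ into tubes $\Theta'_k$ and a leftover. Because $\pi^\varepsilon\in\Gamma(\mu,\nu)$, any term $\pi^\varepsilon(\text{leftover}\times\cdot)$ or $\pi^\varepsilon(\cdot\times\text{leftover})$ is bounded by $\mu(\text{leftover})+\nu(\text{leftover})\to0$ as $\delta\to0$ by the definition of $Z$, so up to $o(1)$ one has $\pi^\varepsilon(O_{pij})=\sum_{k,k'}\pi^\varepsilon((\Theta_k\times\Theta'_{k'})\cap O_{pij})$. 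For $k\neq k'$ the rays $\mathcal{R}_{x_k},\mathcal{R}_{x_{k'}}$ are transversally separated by $\gtrsim\delta$ at the common level set, so, using the Lipschitz control of ray directions (Lemma~\ref{lem:lip-control}) and $\mathrm{dist}(\mathcal{X},\mathcal{Y})>c_0$, every $x\in\Theta_k$, $y\in\Theta'_{k'}$ satisfies $\|x-y\|-(u(x)-u(y))\gtrsim\delta^2=\varepsilon^{1-2\kappa}\gg\varepsilon$; inserting this into the exponent and invoking the crude lower bound on $f_\varepsilon+g_\varepsilon-\|x-y\|$ of Lemma~\ref{claim:lower bound} gives $\pi^\varepsilon(\Theta_k\times\Theta'_{k'})\le e^{-c\varepsilon^{-2\kappa}}$, which stays $o(1)$ after summing over the (polynomially many) pairs. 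Thus only the diagonal terms $k=k'$ survive.

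\emph{Diagonal tubes via a two‑scale Gaussian integral.} Fix $k$. Since the expansions of the entropic potentials and of $\tilde I(x,y):=\|x-y\|-u(x_{\llcorner\mathcal{R}})+u(y_{\llcorner\mathcal{R}})$ are valid only in the $\sqrt\varepsilon$‑window of a ray, I subdivide $\Theta_k\times\Theta'_k$ into finer tubes of transverse radius $\varepsilon^{1/2-\kappa'}$ ($0<\kappa'<\kappa$) centered at rays $\mathcal{R}_w$ with $w$ on a net of $B_\delta(w_k)$, the complement again being negligible by Lemma~\ref{claim:lower bound}. On a fine tube I rescale the transverse coordinates by $\sqrt\varepsilon$, writing $x=F(w+\sqrt\varepsilon\,\xi,s)$, $y=F(w+\sqrt\varepsilon\,\eta,t)$ with $|\xi|,|\eta|\le\varepsilon^{-\kappa'}\to\infty$, and combine the quadratic expansion $\tilde I\sim\varepsilon\,\|J(x)\xi-J(y)\eta\|^2/\|x-y\|$ ($J$ the a.e.\ Jacobian of $F$) with the expansion of $(f_\varepsilon,g_\varepsilon)$ from Theorem~\ref{lem:Deri_Conv}; this turns the rescaled density into $\exp\!\bigl(-(1+o(1))\|J(x)\xi-J(y)\eta\|^2/\|x-y\|+\varepsilon^{-1/2}(1+o(1))\langle v_\varepsilon,J(x)\xi-J(y)\eta\rangle\bigr)e^{\mathfrak{f}(x)+\mathfrak{g}(y)}$. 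The transverse integral (now effectively over all of $\mathbb{R}^{d-1}\times\mathbb{R}^{d-1}$, as its domain blows up) is a shifted Gaussian integral producing the factor $(2\pi\|x-y\|)^{(d-1)/2}$, while $\|v_\varepsilon\|^2/\varepsilon\to c$ (Theorem~\ref{lem:Deri_Conv}) produces $e^{c\|x-y\|}$; the $\varepsilon$‑powers from the transverse Jacobian of $\mu\otimes\nu$, the Gaussian normalization, and the number and volume of the fine tubes combine to leave exactly $\delta^{d-1}$, and the surviving $(s,t)$‑integral along $\mathcal{R}_{x_k}$ becomes, by Definitions~\ref{defn:slice-mu}--\ref{defn:slice-pi}, an integral against $\mu_{\llcorner\mathcal{R}_{x_k}}\otimes\nu_{\llcorner\mathcal{R}_{x_k}}$. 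Choosing $\kappa,\kappa'$ small enough that the $o(1)$ factors survive integration over the $\varepsilon^{-\kappa'}$‑ball and the Gaussian tails are negligible, this gives $\pi^\varepsilon((\Theta_k\times\Theta'_k)\cap O_{pij})=\delta^{d-1}\int_{(\mathfrak{S}^\mu_{\mathcal{R}_{x_k}}\times\mathfrak{S}^\nu_{\mathcal{R}_{x_k}})\cap O_{pij}}\Pi_{\mathcal{R}_{x_k}}\,d(\mu_{\llcorner\mathcal{R}_{x_k}}\otimes\nu_{\llcorner\mathcal{R}_{x_k}})+o(1)$. The pair $(\mathfrak{f},\mathfrak{g})$ is determined up to additive constants by letting the marginal constraints of $\pi^\varepsilon$ pass to the limit, which yields precisely $\int\Pi_{\mathcal{R}_{x_k}}d\mu_{\llcorner\mathcal{R}_{x_k}}=\nu_{\llcorner\mathcal{R}_{x_k}}$ and $\int\Pi_{\mathcal{R}_{x_k}}d\nu_{\llcorner\mathcal{R}_{x_k}}=\mu_{\llcorner\mathcal{R}_{x_k}}$ (uniqueness is the standard uniqueness of Schr\"odinger potentials). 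Summing over the $k$ with $B_\delta(x_k)\subset O_{pij}$ and letting $\varepsilon\to0$ (so $\delta=\varepsilon^{1/2-\kappa}\to0$) completes the proof.

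\emph{Main obstacle.} The crux is this two‑scale decomposition: the expansions of $\tilde I$ and $(f_\varepsilon,g_\varepsilon)$ are local to the $\sqrt\varepsilon$‑window of a single ray, so one must separately discard (i) the $\pi^\varepsilon$‑mass outside all such windows inside the cluster and (ii) the mass leaking between rays at transverse distance $\delta\gg\sqrt\varepsilon$, both via the coarse bound of Lemma~\ref{claim:lower bound}; and the $\varepsilon$‑power bookkeeping must be arranged so that the net transverse scale is $\delta^{d-1}$ and so that the linear term $\varepsilon^{-1/2}\langle v_\varepsilon,\cdot\rangle$ (whose coefficient is $O(\varepsilon^{-1/2})$, hence uncontrolled on the $\delta$‑scale) is only ever integrated on the $\sqrt\varepsilon$‑scale where it is $O(1)$. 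A secondary technical point is that $F,G$ are merely Lipschitz, so $J$ exists only a.e.\ and one works through the area formula, and $G$ is Lipschitz only away from ray ends; this is handled by the construction of $T_{pij}$ (rays meet $S^i_p$ at distance $>1/j$ from their endpoints) together with $\mathrm{dist}(\mathcal{X},\mathcal{Y})>c_0$, which keeps $\|x-y\|$ bounded below on $O_{pij}$.
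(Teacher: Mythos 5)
Your overall strategy mirrors the paper's: localize to a ray cluster, straighten with the Lipschitz coordinates of Lemma~\ref{lem:lip-change-of-variable}, discard the mass far from rays via the large‑deviation estimates (Lemmas~\ref{lem:rate-bound} and~\ref{claim:lower bound}), and do a $\sqrt\varepsilon$‑scale Gaussian expansion of $e^{-I/\varepsilon+\tilde f_\varepsilon+\tilde g_\varepsilon}$ near each ray using Theorem~\ref{lem:Deri_Conv}. However, there is a genuine gap in the off‑diagonal step.

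You partition into disjoint $\delta$‑tubes $\Theta_k\times\Theta'_{k'}$ and claim that for $k\ne k'$ every $x\in\Theta_k$, $y\in\Theta'_{k'}$ has $I(x,y)=\|x-y\|-u(x)+u(y)\gtrsim\delta^2\gg\varepsilon$. This fails for \emph{adjacent} tubes: the centers $x_k,x_{k'}$ are separated by $\sim 2\delta$, but $\Theta_k$ and $\Theta'_{k'}$ have transverse radius $\delta$, so their closures touch. Take $x$ on the face of $\Theta_k$ facing $x_{k'}$ and $y\in\mathcal{R}_x\cap\mathcal{Y}$: then $y$ lies on the common boundary of $\Theta'_k$ and $\Theta'_{k'}$, so it is (or is arbitrarily close to) a point of $\Theta'_{k'}$ with $I(x,y)=0$. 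Consequently $\pi^\varepsilon(\Theta_k\times\Theta'_{k'})$ for nearest‑neighbor pairs is \emph{not} $e^{-c\varepsilon^{-2\kappa}}$; it is in fact of the same order (up to constants) as a boundary slice of the diagonal contribution, and your argument does not control it. One could salvage this with a careful boundary‑layer estimate, but as written the step is wrong.

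The paper's proof avoids exactly this problem by introducing an intermediate padding scale $\delta^{1-\kappa}\gg\delta$: it writes the upper bound $\pi^\varepsilon(O_{pij})\le\sum_k\pi^\varepsilon(A^k_\delta\times D^k_{\delta^{1-\kappa}})+\sum_k\pi^\varepsilon\bigl(A^k_\delta\times(D^k_{\delta^{1-\kappa}})^c\bigr)+o(1)$ and the lower bound $\pi^\varepsilon(O_{pij})\ge\sum_k\pi^\varepsilon(A^k_\delta\times D^k_\delta)-o(1)$. The "far" term now has a genuine transverse gap of order $\delta^{1-\kappa}$ between $A^k_\delta$ and $(D^k_{\delta^{1-\kappa}})^c$, so the hypothesis $\operatorname{dist}(d,\mathcal{R}_a)\ge\delta^{1+\kappa}$ of Lemma~\ref{claim:lower bound} holds uniformly and the LDP bound applies; the two Gaussian terms (with transverse radii $\delta$ and $\delta^{1-\kappa}$) are then shown to converge to the same limit because the extra annulus carries exponentially small mass. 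You should adopt this squeeze with two transverse radii rather than the disjoint‑tube partition. Your additional third scale $\varepsilon^{1/2-\kappa'}$ is unnecessary once this is done: the Taylor remainder of $I$ and of $(\tilde f_\varepsilon,\tilde g_\varepsilon)$ can be controlled uniformly on the full $\delta$‑tube (displacement $O(\delta)$ gives error $O(\delta^3/\varepsilon)=O(\varepsilon^{1/2-3\kappa})\to0$ for $\kappa<1/6$), which is how the paper proceeds. The rest of your computation (rescaling by $\sqrt\varepsilon$, completing the Gaussian integral to produce $(2\pi\|x-y\|)^{(d-1)/2}e^{c\|x-y\|}$, the factor $\delta^{d-1}\varepsilon^{(d-1)/2}$ cancelling against $\tilde f_\varepsilon+\tilde g_\varepsilon+\tfrac{d-1}{2}\log\varepsilon\to\mathfrak f+\mathfrak g$, and the Schrödinger‑system characterization of $(\mathfrak f,\mathfrak g)$) is in line with the paper.
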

We defer the proof of this lemma to Section~\ref{subsec:Lemma_Main_Proof} and proceed to complete the proof of Theorem~\ref{thm:Theorem1}.


 Fix a transport ray $\mathcal{R}\subset T^{0}_{pij}$. Recall that $\vec{aa'} \subset \mathfrak{S}^{\mu}_{\mathcal{R}}$ and $\vec{dd'} \subset \mathfrak{S}^{\nu}_{\mathcal{R}}$ be two line segments (or, slices) on $\mathcal{R}$. Let $\mathfrak{A}_{\omega}$ and $\mathfrak{D}_{\omega}$ be two cylinders around $\vec{aa'}$ and $\vec{dd'}$ respectively of radius $\omega$. Define $O_{pij}:=(\mathfrak{A}_{\omega}\cap T^0_{pij}\cap \mathcal{X})\times (\mathfrak{D}_{\omega}\cap T^0_{pij}\cap\mathcal{Y})$. By Lemma~\ref{lemma:Main},  we have 
 $$ \pi^{\varepsilon} (O_{pij}) = \sum_{k=1, B_{\delta}(x_k)\subset O_{pij}}^{\ell} \delta^{d -1} \int_{\mathfrak{S}^\mu_{\mathcal{R}_{x_k}}\times \mathfrak{S}^\nu_{\mathcal{R}_{x_k}} \cap O_{pij}} \Pi_{\mathcal{R}_{x_k}}(x,y) d(\mu_{\llcorner {\mathcal{R}_{x_k}}} \otimes \nu_{\llcorner {\mathcal{R}_{x_k}}}) +o_{\varepsilon}(1).$$
Note that $\mathfrak{f}_k$ and $\mathfrak{g}_k$ satisfy the same recursive relation as the dual potentials in the entropic optimal transport problem between $\mu_{\llcorner \mathcal{R}_{x_k}}$ and $\nu_{\llcorner\mathcal{R}_{x_k}}$ with respect to the cost function $-\frac{d-1}{2}\log(2\pi \|x-y\|) - c\|x-y\|$. By the stability of entropic optimal transport potential \cite{stability}, $(\mathfrak{f}_k,\mathfrak{g}_k)$ converges to $(\mathfrak{f}, \mathfrak{g})$ (defined via \eqref{eq:mathfrak_f} and \eqref{eq:mathfrak_g}) as $\omega \to 0$.  Thus, we have 
 \[\pi^{\varepsilon} (O_{pij}) = \omega^{d-1} (1+o_{\omega}(1))\int_{\vec{aa'}\times \vec{dd'}} \Pi_{\mathcal{R}}(x,y) d(\mu_{\llcorner \mathcal{R}} \otimes \nu_{\llcorner \mathcal{R}}) + o_{\varepsilon}(1).\]
By first letting $\varepsilon\to 0$ and then, dividing both sides by $\omega^{d-1}$ and sending $\omega \to 0$ yields 
$$ \Tilde{\pi}^{\text{opt}}(\vec{aa'}\times \vec{dd'}) = \int_{\vec{aa'}\times \vec{dd'}} \Pi_{\mathcal{R}}(x,y) d(\mu_{\llcorner \mathcal{R}} \otimes \nu_{\llcorner \mathcal{R}}).$$
This completes the proof.

\subsection{Proof of Lemma~\ref{lemma:Main}}\label{subsec:Lemma_Main_Proof}
Recall that $\delta = \varepsilon^{\frac{1}{2}-\kappa}$ for some $0<\kappa \ll 1$. For any $z$, $\mathcal{R}_z$ denotes the maximal transport ray passing through $z$ and $\mathcal{R}^{\circ}_z$ denotes the same transport with ray ends omitted. Let us define
\begin{align}
    A_{\delta}^k := \bigcup_{z\in B_\delta(x_k)}(\mathcal{R}^{\circ}_z\cap \mathsf{Proj}_{\mathcal{X}}(O_{pij})), &\qquad  
    D^k_{\delta} := 
    \bigcup_{z\in B_\delta(x_k)}(\mathcal{R}^{\circ}_z\cap \mathsf{Proj}_{\mathcal{Y}}(O_{pij})),  \\ 
    D^k_{\delta^{1-\kappa}} := 
    \bigcup_{z\in B_{\delta^{1-\kappa}}(x_k)} &(\mathcal{R}^{\circ}_z\cap \mathsf{Proj}_{\mathcal{Y}}(O_{pij}))
\end{align}
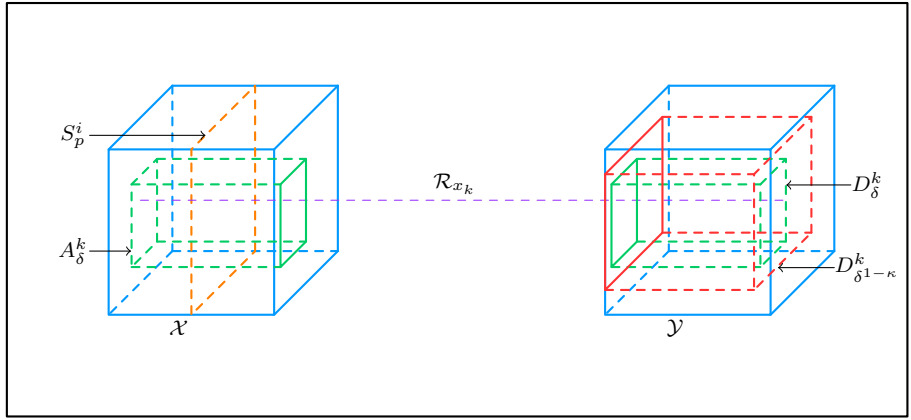
\begin{figure}
\begin{tikzpicture}[scale=2.2, line join=round, line cap=round]
\definecolor{mywhite}{rgb}{0,0,0}
\definecolor{myblue}{rgb}{0,0.6,1}
\definecolor{mygreen}{rgb}{0,0.8,0.4}
\definecolor{myorange}{rgb}{1,0.5,0}
\definecolor{myred}{rgb}{1,0.2,0.2}
\definecolor{myviolet}{rgb}{0.6,0.2,1}

\fill[white] (-1,-1) -- (-1, 0) -- (4, 0) -- (4, -1) -- cycle; 
\fill[white] (-1,1) -- (-1, 1.5) -- (4, 1.5) -- (4, 1) -- cycle; 

\draw[myblue, thick,dashed] (0,0,0) -- (1,0,0);
\draw[myblue, thick] (1,0,0)-- (1,1,0);
\draw[myblue, thick] (1,1,0)-- (0,1,0);
\draw[myblue, thick, dashed] (0,1,0) -- (0,0,0);

\draw[myblue, thick] (0,0,1) -- (1,0,1) -- (1,1,1) -- (0,1,1) -- cycle;

\draw[myblue, thick, dashed] (0,0,0) -- (0,0,1);
\draw[myblue, thick] (1,0,0) -- (1,0,1);
\draw[myblue, thick] (1,1,0) -- (1,1,1); 
\draw[myblue, thick] (0,1,0) -- (0,1,1);

\node at (0.5, 0, 1.2) {$\mathcal{X}$};

\draw[myblue, thick,dashed] (3,0,0) -- (4,0,0);
\draw[myblue, thick] (4,0,0)-- (4,1,0);
\draw[myblue, thick] (4,1,0)-- (3,1,0);
\draw[myblue, thick, dashed] (3,1,0) -- (3,0,0);

\draw[myblue, thick] (3,0,1) -- (4,0,1) -- (4,1,1) -- (3,1,1) -- cycle;

\draw[myblue, thick, dashed] (3,0,0) -- (3,0,1);
\draw[myblue, thick] (4,0,0) -- (4,0,1);
\draw[myblue, thick] (4,1,0) -- (4,1,1); 
\draw[myblue, thick] (3,1,0) -- (3,1,1);

\node at (3.5, 0, 1.2) {$\mathcal{Y}$};

\draw[myorange, thick, dashed] (0.5,0,0) -- (0.5,1,0) -- (0.5,1,1) -- (0.5,0,1) -- cycle;

\node at (-0.6,0.7) {$S^i_p$};
\draw[->] (-0.5,0.7) -- (0.2,0.7);

\draw[myviolet, thin,dashed] (0,0.5,0.5) -- (3.9,0.5,0.5);

\node at (1.9,0.6,0.5) {$\mathcal{R}_{x_k}$};

\draw[mygreen, thick, dashed] 
(0.1,0.25,0.5) -- (0.1,0.75,0.5) -- (0.1,0.75,0.9) -- (0.1,0.25,0.9) -- cycle;

\draw[mygreen, thick] 
(1,0.25,0.5) -- (1,0.75,0.5) -- (1,0.75,0.9) -- (1,0.25,0.9) -- cycle;

\node at (-0.6,0) {$A^k_{\delta}$};
\draw[->] (-0.5,0) -- (-0.25,0);

\draw[mygreen, thick, dashed] (0.1,0.25,0.5) -- (1,0.25,0.5);
\draw[mygreen, thick, dashed] (0.1,0.75,0.5) -- (1,0.75,0.5);
\draw[mygreen, thick, dashed] (0.1,0.75,0.9) -- (1,0.75,0.9);
\draw[mygreen, thick, dashed] (0.1,0.25,0.9) -- (1,0.25,0.9);

\draw[mygreen, thick] 
(3,0.25,0.5) -- (3,0.75,0.5) -- (3,0.75,0.9) -- (3,0.25,0.9) -- cycle;
\draw[mygreen, thick, dashed] 
(3.9,0.25,0.5) -- (3.9,0.75,0.5) -- (3.9,0.75,0.9) -- (3.9,0.25,0.9) -- cycle;

\draw[mygreen, thick, dashed] (3,0.25,0.5) -- (3.9,0.25,0.5);
\draw[mygreen, thick, dashed] (3,0.75,0.5) -- (3.9,0.75,0.5);
\draw[mygreen, thick, dashed] (3,0.75,0.9) -- (3.9,0.75,0.9);
\draw[mygreen, thick, dashed] (3,0.25,0.9) -- (3.9,0.25,0.9);

\draw[myred, thick] 
(3,0.15,0.1) -- (3,0.85,0.1) -- (3,0.85,1) -- (3,0.15,1) -- cycle;

\draw[myred, thick,dashed] 
(3.9,0.15,0.1) -- (3.9,0.85,0.1) -- (3.9,0.85,1) -- (3.9,0.15,1) -- cycle;

\draw[myred, thick,dashed] (3,0.15,0.1) -- (3.9,0.15,0.1);
\draw[myred, thick,dashed] (3,0.85,0.1) -- (3.9,0.85,0.1);
\draw[myred, thick,dashed] (3,0.85,1) -- (3.9,0.85,1);
\draw[myred, thick,dashed] (3,0.15,1) -- (3.9,0.15,1);

\node at (4.2,-0.1) {$D^k_{\delta^{1-\kappa}}$};
\draw[->] (4.0,-0.1) -- (3.66,-0.1);

\node at (4.2,0.4) {$D^k_{\delta}$};
\draw[->] (4.1,0.4) -- (3.71,0.4);

\draw[black, thick] (current bounding box.south west) rectangle (current bounding box.north east);
\end{tikzpicture}
\caption{Pictorial depiction of $S^i_p$, $A^k_{\delta}$, $D^{k}_{\delta}$ when $\mathcal{X}$ and $\mathcal{Y}$ are two unit cubes where one is a linear translation of other and $\mu,\nu$ are uniform measures on $\mathcal{X},\mathcal{Y}$ respectively.}
\end{figure}

We first prove that
\begin{align}
\pi^\varepsilon(O_{pij}) &\leq \underbrace{\sum_{B_{\delta}(x_k)\subset S^i_p} \int_{A^k_{\delta}\times D^k_{\delta^{1-\kappa}}} \frac{d\pi^\varepsilon}{d(\mu \otimes \nu)}d(\mu \otimes \nu)}_{=:(\mathbf{I})}\nonumber\\  &+ \underbrace{\sum_{B_{\delta}(x_k)\subset S^i_p} \int_{A^k_\delta \times  (D^k_{\delta^{1-\kappa}})^c}\frac{d\pi^\varepsilon}{d(\mu \otimes \nu)}d(\mu \otimes \nu)}_{=:(\mathbf{II})} + o_{\delta}(1) \label{eq:pi(O_{pij})_upper}\\
\pi^\varepsilon(O_{pij}) &\geq \underbrace{\sum_{B_{\delta}(x_k)\subset S^i_p} \int_{A^k_{\delta}\times D^k_\delta} \frac{d\pi^\varepsilon}{d(\mu \otimes \nu)}d(\mu \otimes \nu)}_{=:(\mathbf{III})} - o_{\delta}(1) \label{eq:pi(O_{pij})_lower}
\end{align}
where $(D^k_{\delta^{1-\kappa}})^c$ denotes the complement of the set $D^k_{\delta^{1-\kappa}}$ in $\mathsf{Proj}_{\mathcal{Y}}(O_{pij})$, i.e., $\mathsf{Proj}_{\mathcal{Y}}(O_{pij})\backslash D^k_{\delta^{1-\kappa}}$. The first inequality \eqref{eq:pi(O_{pij})_upper} follows by noticing that 
\begin{align}\label{set:ineq-1}
\pi(\Tilde{Z}_{\delta}) \to 0, \hspace{0.5em}  \Tilde{Z}_{\delta}: =O_{pij} \setminus \bigcup_{B_{\delta}(x_k)\subset S^i_p} \Big( (A_\delta^k \times D_{\delta^{1-\kappa}}^k)  \cup (A_\delta^k \times (D_{\delta^{1-\kappa}}^k)^c)\Big).
\end{align}
To see this, we recall that $\mu_{\llcorner (S^i_p \cap \mathcal{X})}(Z)\to 0$ when $\delta \to 0$ where we define $Z:= (S^i_p\cap \mathcal{X}) \Delta \bigcup^{\ell}_{k=1} B_{\delta}(x_k)$. Through this, one can write 
\begin{align}
    O_{pij} &\subseteq \left(\bigcup_{z\in S_p^i} \mathcal{R}_z^0 \cap \mathsf{Proj}_{\mathcal{X}}(O_{pij})\right) \times \left(\bigcup_{z\in S_p^i} \mathcal{R}_z^0\cap \mathsf{Proj}_{\mathcal{Y}}(O_{pij})\right)\\
    &= \left(\bigcup_{k=1}^{\ell} \bigcup_{z\in B_{\delta}(x_k)} \mathcal{R}_z^0 \cap \mathsf{Proj}_{\mathcal{X}}(O_{pij})\right) \times \left(\bigcup_{k=1}^{\ell} \bigcup_{z\in B_{\delta}(x_k)} \mathcal{R}_z^0 \cap \mathsf{Proj}_{\mathcal{Y}}(O_{pij})\right) \cup \Tilde{Z}\\
    &\subseteq \left(\bigcup_{k=1}^\ell  \bigcup_{m=1}^\ell A_\delta^k \times  D_{\delta^{1-\kappa}}^m\right) \cup \Tilde{Z}\\
    &\subseteq \bigcup_{k=1}^{\ell} (A_\delta^k \times  D_{\delta}^k) \cup \left(A_\delta^k \times (D_{\delta^{1-\kappa}}^k)^c\right)\cup \Tilde{Z},
\end{align}
where 
\begin{align}
\Tilde{Z} &:= \bigcup \left(\bigcup_{z\in  Z} \mathcal{R}_z^0 \cap \mathsf{Proj}_{\mathcal{X}}(O_{pij})\right) \times \left(\bigcup_{z\in S^i_p} \mathcal{R}_z^0\cap \mathsf{Proj}_{\mathcal{Y}}(O_{pij})\right) \\ &\quad \bigcup \left(\bigcup_{z\in  S^i_p} \mathcal{R}_z^0 \cap \mathsf{Proj}_{\mathcal{X}}(O_{pij})\right) \times \left(\bigcup_{z\in Z} \mathcal{R}_z^0\cap \mathsf{Proj}_{\mathcal{Y}}(O_{pij})\right).
\end{align}
This proves \eqref{set:ineq-1}, which in turn implies \eqref{eq:pi(O_{pij})_upper}. 
On the other hand, we have 
\begin{align*}
     \bigcup_{k=1}^\ell A_\delta^k \times  D_{\delta}^k &\subseteq O_{pij} \bigcup \left(\bigcup_{z\in  Z} \mathcal{R}_z^0 \cap \mathsf{Proj}_{\mathcal{X}}(O_{pij})\right) \times \left(\bigcup_{z\in S^i_p} \mathcal{R}_z^0\cap \mathsf{Proj}_{\mathcal{Y}}(O_{pij})\right) \\ &\quad \bigcup \left(\bigcup_{z\in  S^i_p} \mathcal{R}_z^0 \cap \mathsf{Proj}_{\mathcal{X}}(O_{pij})\right) \times \left(\bigcup_{z\in Z} \mathcal{R}_z^0\cap \mathsf{Proj}_{\mathcal{Y}}(O_{pij})\right).
\end{align*}
This proves \eqref{eq:pi(O_{pij})_lower}. We now show that as $\varepsilon \to 0$,
 \begin{align}\label{eq:I-convergence}
    \Big|(\mathbf{I}) &- \delta^{d-1}\sum_{k=1}^{\ell(\delta)} \int_{\mathcal{R} \cap (A^k_{\delta}\times D^k_{\delta^{1-\kappa}})} \Pi_{\mathcal{R}}(x,y)d(\mu_{\llcorner \mathfrak{S}^\mu_{\mathcal{R}}} \otimes \nu_{\llcorner \mathfrak{S}^\nu_{\mathcal{R}}})\Big| \longrightarrow 0 \\
     \Big|(\mathbf{III}) &- \delta^{d-1}\sum_{k=1}^{\ell(\delta)} \int_{\mathcal{R} \cap (A^k_{\delta}\times D^k_{\delta})} \Pi_{\mathcal{R}}(x,y)d(\mu_{\llcorner \mathfrak{S}^\mu_{\mathcal{R}}} \otimes \nu_{\llcorner \mathfrak{S}^\nu_{\mathcal{R}}})\Big| \longrightarrow 0 \label{eq:III-convergence}
\end{align}
and 
\begin{align}\label{eq:II-convergence}
  (\mathbf{II})  \longrightarrow 0.
\end{align}
Before proceeding to the proof of the above claim, let us first show how this implies the desired result of this claim. Recall from \eqref{eq:pi(O_{pij})_upper} and \eqref{eq:pi(O_{pij})_lower} that $\pi^{\varepsilon}(O_{pij})$ is bounded above by $(\mathbf{I})+ (\mathbf{II})$ and lower bounded by $(\mathbf{I})$. Combining these facts with the convergence claimed in the above display proves the result of this lemma. 
\newline
Now we proceed to the proof of \eqref{eq:I-convergence}, \eqref{eq:III-convergence} and \eqref{eq:II-convergence}. Proof of \eqref{eq:I-convergence}, \eqref{eq:III-convergence} are similar. Therefore it suffices to establish \eqref{eq:III-convergence}. We start by proving \eqref{eq:II-convergence}. 
To estimate the second term we make use of Lemma \ref{lem:rate-bound} and our large deviation estimate from Lemma \ref{claim:lower bound} to get that
\begin{align*}
  (\mathbf{II})=\sum_{B_{\delta}\subset S^{i}_p} \pi^{\varepsilon}\left(A_{\delta}\times (D_{\delta^{1-\kappa}})^c \right)& \lesssim \sum_{B_{\delta}\subset S_p^i} \inf_{(a,d)\in A_{\delta}\times D_{\delta^{1-\kappa}}^c} e^{-\frac{I(a,d)}{\varepsilon}}\\
&\lesssim  \delta^{-(d-1)} e^{\frac{-C\delta^{2+2\kappa}}{\varepsilon}},
\end{align*}
where the factor $\delta^{d-1}$ comes from the fact that the intrinsic balls on the level set have volume growth of order $\delta^{d-1}.$ In particular when $\delta = \epsilon^{\frac{1}{2}-\kappa}$, $(\mathbf{II})\to 0$ as $\varepsilon$ tends to zero. Now we proceed to show \eqref{eq:III-convergence}. Recall that 
\begin{align}\label{eq:Treat_I}
 (\mathbf{I}) = \sum_{k=1}^{\ell} \int_{A^k_{\delta}\times D^k_{\delta}} \frac{d\pi^\varepsilon}{d(\mu \otimes \nu)}d(\mu \otimes \nu) .
\end{align}
We need to estimate $\frac{d\pi^\varepsilon}{d(\mu \otimes \nu)}$ on the set $A^k_{\delta}\times D^k_{\delta}$. 
To start with, we write 
\begin{align}\label{eq:pi_epsilon_measure} 
\pi^{\varepsilon}(A^k_\delta\times D^k_{\delta}) = \int_{A_{\delta}\times D_{\delta}} e^{-\frac{I(x',y')}{\varepsilon} + \tilde{f}_{\varepsilon}(x')+ \tilde{g}_{\varepsilon}(y')} d(\mu\otimes \nu). 
\end{align}
where $\tilde{f}_{\varepsilon}(x) = (f_{\varepsilon}(x)+ u(x))/\varepsilon$, $\tilde{g}_{\varepsilon}(y) = (g_{\varepsilon}(y)- u(y))/\varepsilon$ and $u$ is a Kantorovich potential of the underlying optimal transport problem. 

By Lemma~2.2 of \cite{mcann-caff-feldman}, each ray cluster (say $T^0_{pij}$ in this case) admits a local coordinate function $G$ mapping the ray cluster to $\mathbb{R}^{d-1}\times\mathbb{R} $ and its inverse $F$ such that, one has  $F(G(x)) = x$ for all $x\in T^0_{pij}$ and $G(x) = (U(z), u(x)-u(z))$ for any $x$ along any transport ray $\mathcal{R}_z\subset T^0_{pij}$ with endpoint $z$ and $u$ being the Kantorovich potential function. By the change of variable formula, let $p$ and $q$ be the densities of $\mu$ and $\nu$, 
\[\mathrm{d}\mu(x') = p(F(\tilde{x})) J(F(\tilde{x})) d\tilde{x}, \qquad \mathrm{d}\nu(y') = q(F(\tilde{y})) J(F(\tilde{y})) d\tilde{y}\]
where $(\tilde{x},\tilde{y})$ be the local coordinates obtained by the transformation $G$ and $J$ is the Jacobian of the transformation. Furthermore, the construction of the function $F$ in \cite{mcann-caff-feldman} suggests that 
\begin{align}\label{eq:F_decompose} 
F(\mathfrak{x}) = F((\mathfrak{x}_1, \ldots ,\mathfrak{x}_d)) = V(\mathfrak{x}_1,\ldots , \mathfrak{x}_{d-1}) + \mathfrak{x}_d \chi(V(\mathfrak{x}_1.\ldots , \mathfrak{x}_{d-1})) , 
\end{align}
for $\mathfrak{x} \in \mathcal{R}_z$ and some continuous function $V:\mathbb{R}^{d-1} \to \mathbb{R}^d$. Here $\chi$ denotes the direction of transport ray $\mathcal{R}_x$. 
Recall that $A^k_{\delta}$ and $D^k_{\delta}$ are cylinders of radius $\delta$ and around some transport ray $\mathcal{R}_{x_k}$. For the rest of the proof, we use the notations $A_{\delta}$, $D_{\delta}$ and $\mathcal{R}$ in place of $A^k_{\delta}, D^k_{\delta}$ and $\mathcal{R}_{x_k}$ respectively. Since $\delta = O(\varepsilon^{\frac{1}{2}-\kappa})$ and since we assume that $p$ and $q$ are continuous in the interiors of $\mathcal{X}$ and $\mathcal{Y}$, by using \eqref{eq:F_decompose} and continuity of $F$, one can write for any $\tilde{x}=: G(x')\in G(A_{\delta}\times D_{\delta})$ with $\tilde{x} = \mathfrak{x} + \sqrt{\varepsilon}\mathfrak{x}^{\perp}$ and $\tilde{y} = \mathfrak{y} + \varepsilon\mathfrak{y}^{\perp}$  for some $\mathfrak{x}^{\perp}, \mathfrak{y}^{\perp}\in \mathbb{R}^{d-1}$ ($\mathfrak{x}$ and $\mathfrak{y}$ are the projections of $\tilde{x} \in G(A_{\delta})$ and $\tilde{y} \in G(D_{\delta})$  onto the transport ray $R_z$) that  
\begin{align*}
\mathrm{d}\mu(x') & = \varepsilon^{(d-1)/2}p( V(\mathfrak{x}_1.\ldots , \mathfrak{x}_{d-1}) + \mathfrak{x}_d \nu(V(\mathfrak{x}_1.\ldots , \mathfrak{x}_{d-1})))\\ &\times  J(F( V(\mathfrak{x}_1.\ldots , \mathfrak{x}_{d-1}) + \mathfrak{x}_d \chi(V(\mathfrak{x}_1.\ldots , \mathfrak{x}_{d-1})))) (1+O(\delta))d\mathfrak{x}^{\perp}_1 \ldots d\mathfrak{x}^{\perp}_{d-1} d\mathfrak{x}_d
\end{align*}
and,
\begin{align}\label{eqn:local-nu-expression}
\mathrm{d}\nu(y') & = \varepsilon^{(d-1)/2} q( V(\mathfrak{y}_1.\ldots , \mathfrak{y}_{d-1}) + \mathfrak{y}_d \nu(V(\mathfrak{y}_1.\ldots , \mathfrak{y}_{d-1})))\\ &\times  J(F( V(\mathfrak{y}_1.\ldots , \mathfrak{y}_{d-1}) + \mathfrak{y}_d \chi(V(\mathfrak{y}_1.\ldots , \mathfrak{y}_{d-1})))) (1+O(\delta))d\mathfrak{y}^{\perp}_1 \ldots d\mathfrak{y}^{\perp}_{d-1} d\mathfrak{y}_d.
\end{align}
By Taylor's expansion of $I(x',y')$, we have 
\begin{align}\label{eqn:I-taylor}
I(x',y') = \|F(\tilde{x}) - F(\tilde{y})\| - \|F(\mathfrak{x}) - F(\mathfrak{y})\| = 
\varepsilon \frac{\|(\overline{\tfrac{\partial F(\mathfrak{x})}{\partial \mathfrak{x}}})\mathfrak{x}^{\perp} - (\overline{\tfrac{\partial F(\mathfrak{y})}{\partial \mathfrak{y}}})\mathfrak{y}^{\perp}\|^2}{2\|F(\mathfrak{x})-F(\mathfrak{y})\|} + O(\varepsilon\delta)    
\end{align}
where the last equality follows by noting that $\langle F(\mathfrak{x}) - F(\mathfrak{y}), (\overline{\tfrac{\partial F(\mathfrak{x})}{\partial \mathfrak{x}}})\mathfrak{x}^{\perp} - (\overline{\tfrac{\partial F(\mathfrak{y})}{\partial \mathfrak{y}}})\mathfrak{y}^{\perp} \rangle = O(\varepsilon\delta)$. Here $\overline{\tfrac{\partial F(\mathfrak{x})}{\partial \mathfrak{x}}}$ and $\overline{\tfrac{\partial F(\mathfrak{y})}{\partial \mathfrak{y}}}$ denote the principle submatrices $(d-1)\times (d-1)$ of $\tfrac{\partial F(\mathfrak{x})}{\partial \mathfrak{x}}$ and $\tfrac{\partial F(\mathfrak{y})}{\partial \mathfrak{y}}$, respectively.
Furthermore, from Theorem~\ref{lem:Deri_Conv}(b), we have 
\begin{align}\label{eqn:f_eps and g_eps expansion}
\tilde{f}_{\varepsilon}(x') &= \tilde{f}_{\varepsilon}(F(\mathfrak{x})) + \frac{1}{\sqrt{\varepsilon}}\langle v_1, (\overline{\tfrac{\partial F(\mathfrak{x})}{\partial \mathfrak{x}}})\mathfrak{x}^{\perp}\rangle + O(\sqrt{\varepsilon}) \\ 
\tilde{g}_{\varepsilon}(y') &= \tilde{g}_{\varepsilon}(F(\mathfrak{y})) - \frac{1}{\sqrt{\varepsilon}}\langle v_1, (\overline{\tfrac{\partial F(\mathfrak{y})}{\partial \mathfrak{y}}})\mathfrak{y}^{\perp}\rangle + O(\sqrt{\varepsilon}),
\end{align}
for some constant vector $v_1 = c\cdot (1,\ldots, 1) \in \mathbb{R}^d$.
Plugging  these into the right hand side of \eqref{eq:pi_epsilon_measure} and substituting $\bar{\mathfrak{x}}^\perp = (\overline{\tfrac{\partial F(\mathfrak{x})}{\partial \mathfrak{x}}})\mathfrak{x}^{\perp}$, $\bar{\mathfrak{y}}^{\perp} = (\overline{\tfrac{\partial F(\mathfrak{y})}{\partial \mathfrak{y}}})\mathfrak{y}^{\perp}$ (and also recognizing $F(\mathfrak{x})= x$ and $F(\mathfrak{y})= y$) yields 
\begin{align}\label{eqn:integral-on-rays}
\pi^{\varepsilon}(A_\delta\times D_{\delta}) = &\int_{(A_{\delta}\times D_{\delta})\cap \mathcal{R}}  \int^{\delta\varepsilon^{-1/2}}_{-\delta\varepsilon^{-1/2}}\int^{\delta\varepsilon^{-1/2}}_{-\delta\varepsilon^{-1/2}} \int^{\delta\varepsilon^{-1/2}}_{-\delta\varepsilon^{-1/2}}\int^{\delta\varepsilon^{-1/2}}_{-\delta\varepsilon^{-1/2}} \\ & \times \varepsilon^{(d-1)}   e^{- \frac{\|\bar{\mathfrak{x}}^{\perp} - \bar{\mathfrak{y}}^{\perp}\|^2}{2\|x-y\|} + \langle v_1, (\bar{\mathfrak{x}}^\perp - \bar{\mathfrak{y}}^\perp)\rangle + O(\sqrt{\varepsilon})} \\ & \times p(x)q(y) e^{\tilde{f}_{\varepsilon}(x)+ \tilde{g}_{\varepsilon}(y) }  d\bar{\mathfrak{x}}^\perp_1 \ldots d\bar{\mathfrak{x}}^\perp_{d-1} d\mathfrak{x}_d d\bar{\mathfrak{y}}^\perp_1 \ldots d\bar{\mathfrak{y}}^\perp_{d-1} d\mathfrak{y}_d.
\end{align}
 Notice that $x$ and $y$ are parametrized as $x = V(z_1,\ldots , z_{d-1}) + \mathfrak{x}_\mathrm{d}\chi(z_1,\ldots , z_{d-1})$ and $y = V(z_1,\ldots , z_{d-1}) + \mathfrak{y}_\mathrm{d}\chi(z_1,\ldots , z_{d-1}, z_d)$. Thus
\begin{align}
\|x-y\| = \|\chi(z_1,\ldots , z_{d-1}, z_d)\| |\mathfrak{x}_d - \mathfrak{y}_d| = |\mathfrak{x}_d - \mathfrak{y}_d|.    
\end{align}
Completing the integral with respect to $x^{\perp}$ and $y^{\perp}$, we obtain that
\begin{align}
\pi^{\varepsilon}(A_\delta\times D_{\delta}) &\sim \delta^{(d-1)} \varepsilon^{(d-1)/2}\int_{(A_{\delta}\times D_{\delta})\cap \mathcal{R}} (2\pi \|\mathfrak{x}_d-\mathfrak{y}_d\|)^{\frac{d-1}{2}} e^{c^2(d-1)\|\mathfrak{x}_d-\mathfrak{y}_d\|} \\ & \times e^{\tilde{f}_{\varepsilon}(x)+ \tilde{g}_{\varepsilon}(y) } p(x) q(y) d\mathfrak{x}_d d\mathfrak{y}_d
\end{align}
where $\mathfrak{x}_d$ and $\mathfrak{y}_d$ vary in the set $A_{\delta}\cap R_z$ and $D_{\delta}\cap R_z$ respectively. 
 Theorem~\ref{lem:Deri_Conv}(a) further implies that $(\tilde{f}_{\varepsilon}(x)+ \tilde{g}_{\varepsilon}(y) + \frac{d-1}{2}\log \varepsilon)$ converges uniformly on any compact sets of $\mathcal{R}$ to $\mathfrak{f}(x)+ \mathfrak{g}(y)$ as $\varepsilon \to 0$ where $\mathfrak{f}$ and $\mathfrak{g}$ are uniquely determined (up to some constants) by the relation \eqref{eq:mathfrak_f} and \eqref{eq:mathfrak_g}. Applying this convergence result to the above display shows that the integrand converges pointwise to $\Pi_{\mathcal{R}} (x,y)$. Plugging this into the right-hand side of \eqref{eq:Treat_I} yields \eqref{eq:II-convergence}. This completes the proof. 

\section{Limit Theory of EOT Potentials}\label{sec:eot}
In this section, we establish the expansion of the entropic optimal transport potentials in the vicinity of a transport ray. Earlier works, such as \cite{Nutz-Wiesel}, have demonstrated that the EOT potentials converge to the Kantorovich potential as the entropic regularization parameter $\varepsilon \to 0$. Building on this foundation, Theorem~\ref{lem:Deri_Conv} extends these results by showing that the scaled difference between the EOT potential and the Kantorovich potential, when divided by $\varepsilon$, has a non-trivial limit as $\varepsilon \to 0$. In addition, Theorem~\ref{lem:Deri_Conv} establishes the convergence of the derivative (scaled by $\varepsilon$) of the EOT potential in any direction orthogonal to a transport ray. These findings provide a more refined understanding of the behavior of the EOT potential near transport rays, further enriching the theoretical framework of entropic optimal transport under vanishing regularization. 
\begin{theorem}\label{lem:Deri_Conv}
Let $(f_{\varepsilon}, g_{\varepsilon})$ be the entropic potentials and $(-u,u)$ be Monge-Kantorovich potentials as in Section \ref{sec:l1-ot}. Denote
\begin{align*}
\tilde{f}_{\varepsilon}(x) = \frac{f_{\varepsilon}(x)+ u(x)}{\varepsilon}, \quad \tilde{g}_{\varepsilon}(x) =  \frac{g_{\varepsilon}(x)- u(x)}{\varepsilon}.
\end{align*}
  For any transport ray $\mathcal{R}$, let $(v_1,\ldots, v_{d-1})$ be the set of $d-1$ orthonormal vectors which are also orthogonal to the tangential direction on $\mathcal{R}$.  For any point $x \in \mathcal{R}^{\circ}\cap \mathcal{X}$ and $y\in \mathcal{R}^{\circ}\cap\mathcal{Y}$, we define 
\begin{align}
    A^g_\varepsilon(y) &:= \frac{1}{\sqrt{\varepsilon}}(\langle \nabla g_{\varepsilon}(y), v_1\rangle, \ldots  \langle \nabla g_{\varepsilon}(y), v_{d-1}\rangle), \\
    A^f_\varepsilon(x) &:= \frac{1}{\sqrt{\varepsilon}} (\langle \nabla f_{\varepsilon}(x), v_1\rangle, \ldots  \langle \nabla f_{\varepsilon}(x), v_{d-1}\rangle).
\end{align}
For almost all transport ray $\mathcal{R}$, we have:
\begin{enumerate}
\item[(a)]  $(\tilde{f}_{\varepsilon} + \frac{(d-1)}{2}\log \varepsilon, \tilde{g}_{\varepsilon})$ converges uniformly on any compact subset of $(\mathcal{R}^\circ\cap \mathcal{X})\times (\mathcal{R}^\circ\cap \mathcal{Y})$.

\item[(b)] As $\varepsilon \to 0$, $\lim_{\varepsilon \to 0 }A^g_{\varepsilon}(y) = c(1,\ldots, 1)$ and $\lim_{\varepsilon \to 0} A^f_\varepsilon(x) = - c(1,\ldots, 1)$ for some $c\in \mathbb{R}$ which is same for all $x\in \mathcal{R}^{\circ}$.  
\end{enumerate}
\end{theorem}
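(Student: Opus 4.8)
The plan is to exploit the Schr\"odinger fixed-point system satisfied by the entropic potentials. Writing $f_\varepsilon=-u+\varepsilon\tilde f_\varepsilon$, $g_\varepsilon=u+\varepsilon\tilde g_\varepsilon$ and $I(x,y):=\|x-y\|-|u(x)-u(y)|\ge0$ — so that $I=0$ precisely when $x,y$ lie on a common transport ray and $\tfrac{\mathrm d\pi^\varepsilon}{\mathrm d(\mu\otimes\nu)}=e^{-I(x,y)/\varepsilon}e^{\tilde f_\varepsilon(x)+\tilde g_\varepsilon(y)}$ — the marginal constraints become
\[
e^{\tilde f_\varepsilon(x)}=\Big(\textstyle\int_{\mathcal Y}e^{-I(x,y)/\varepsilon}e^{\tilde g_\varepsilon(y)}\,\mathrm d\nu(y)\Big)^{-1},\qquad e^{\tilde g_\varepsilon(y)}=\Big(\textstyle\int_{\mathcal X}e^{-I(x,y)/\varepsilon}e^{\tilde f_\varepsilon(x)}\,\mathrm d\mu(x)\Big)^{-1}.
\]
Fixing a ray cluster and passing to the Lipschitz coordinates $F,G$ of Lemma~\ref{lem:lip-change-of-variable}, the computation \eqref{eqn:I-taylor} gives, for $x$ interior to a ray and $y=F(\mathfrak y+w)$ with $w$ transverse, the uniform expansion $I(x,y)=\tfrac{1}{2\|x-\mathfrak y\|}\big\|\,\overline{\partial F(\mathfrak y)}\,w\,\big\|^2(1+o(1))$ and symmetrically in $x$, so that both integrals above are Laplace integrals concentrating on the one-dimensional ray with $d-1$ transverse Gaussian directions.

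For part (a), I would first record two a priori facts: each $f_\varepsilon,g_\varepsilon$ is $1$-Lipschitz (a log-sum-exp of the $1$-Lipschitz maps $x\mapsto\|x-y\|$), and, by Laplace's method applied to the displayed integrals — a nondegenerate quadratic minimum of codimension $d-1$ — one has $\int_{\mathcal Y}e^{-I(x,y)/\varepsilon}\,\mathrm d\nu(y)\asymp\varepsilon^{(d-1)/2}$ uniformly on compact subsets of the ray interior. The second fact shows $\tilde f_\varepsilon+\tfrac{d-1}{2}\log\varepsilon$ and $\tilde g_\varepsilon$ are uniformly bounded there; differentiating the fixed-point relations along the ray and using that $u$ is differentiable with $\nabla u$ pointing along the ray yields equicontinuity, so by Arzel\`a--Ascoli a subsequence of $(\tilde f_\varepsilon+\tfrac{d-1}{2}\log\varepsilon,\tilde g_\varepsilon)$ converges uniformly on compacta to a pair $(\mathfrak f,\mathfrak g)$. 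Inserting the Gaussian identity $\int_{\mathbb R^{d-1}}e^{-|\bar w|^2/(2t)+\langle a,\bar w\rangle}\,\mathrm d\bar w=(2\pi t)^{(d-1)/2}e^{t|a|^2/2}$ with $t=\|x-y\|$ into the Laplace expansion of the marginal equations shows $(\mathfrak f,\mathfrak g)$ is a Schr\"odinger pair for the entropic problem between $\mu_{\llcorner\mathcal R}$ and $\nu_{\llcorner\mathcal R}$ with kernel $(2\pi\|x-y\|)^{(d-1)/2}e^{c\|x-y\|}$, i.e.\ it satisfies \eqref{eq:mathfrak_f}--\eqref{eq:mathfrak_g}; uniqueness of such pairs (bounded continuous cost) makes the limit independent of the subsequence, giving (a).

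For part (b), I would use the first-variation identity $\langle\nabla f_\varepsilon(x),v_j\rangle=\int\big\langle\tfrac{x-y}{\|x-y\|},v_j\big\rangle\,\mathrm d\pi^\varepsilon_x(y)$, valid since $\mathcal X,\mathcal Y$ are disjoint, with $\pi^\varepsilon_x$ the conditional of Lemma~\ref{lem:stability-entropic}, of density $\propto e^{-I(x,y)/\varepsilon}e^{\tilde g_\varepsilon(y)}$. For $x$ interior to the ray, after the substitution $\bar w=\overline{\partial F(\mathfrak y)}\,w/\sqrt\varepsilon$ the transverse law of $\pi^\varepsilon_x$ becomes asymptotically Gaussian with mean $\|x-\mathfrak y\|\,A^g_\varepsilon$, the shift coming from the expansion of $\tilde g_\varepsilon$ transverse to the ray; since $\tfrac{x-y}{\|x-y\|}$ equals the ray direction $\chi\perp v_j$ up to a term linear in $\bar w$, the expectation above is $O(\sqrt\varepsilon)$, so $A^f_\varepsilon(x)$ stays bounded and $\lim A^f_\varepsilon$ equals minus a ray-average of $\lim A^g_\varepsilon$, and symmetrically for $A^g_\varepsilon$. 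This averaging fixed point forces both limits to be constant along the ray and opposite, hence of the form $\mp c\,(1,\dots,1)$, while the Gaussian normalization of part (a) identifies the same constant as $c=\tfrac12\lim_{\varepsilon\to0}\|v_\varepsilon\|^2/\varepsilon$; the coupled system pins it down uniquely.

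The crux, and the main obstacle, is that (a) and (b) are not independent: evaluating the Laplace expansion of the marginal equation requires the first-order transverse behaviour of $\tilde g_\varepsilon$, namely its scaled gradient $A^g_\varepsilon$, whereas bounding and identifying $A^g_\varepsilon$ requires the Laplace expansion of the same integrals. One must therefore carry the zeroth-order unknowns (the potentials restricted to the ray) and the first-order unknowns (their scaled transverse gradients) together — proving joint precompactness of the pair and identifying every joint subsequential limit with the unique solution of one coupled limiting system. Two further technical points: the statement holds only for a.e.\ ray, since one discards the null set where the coordinates of Lemma~\ref{lem:lip-change-of-variable} degenerate, $u$ is nondifferentiable, or the disintegration of Lemma~\ref{lem:stability-entropic} fails; and controlling the Laplace remainders uniformly down to the scale $\delta=\varepsilon^{1/2-\kappa}$, including near the ray ends, relies on the large-deviation lower bounds (Lemma~\ref{claim:lower bound}, Lemma~\ref{lem:rate-bound}) already used in the proof of Lemma~\ref{lemma:Main}.
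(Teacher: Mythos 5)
Your proposal correctly identifies the overall architecture — Schr\"odinger fixed-point system, passage to the Lipschitz coordinates of Lemma~\ref{lem:lip-change-of-variable}, the codimension-$(d-1)$ Laplace expansion via~\eqref{eqn:I-taylor}, the Gaussian moment-generating-function identity, and for part~(b) a first-variation/averaging argument in the spirit of the paper's Lemma~\ref{lem:const-derivative}. You also correctly diagnose the central obstruction: the Laplace expansion that would prove~(a) requires a priori control of the scaled transverse gradients $A^f_\varepsilon, A^g_\varepsilon$, while bounding those gradients requires the Laplace expansion. That diagnosis is accurate — and it is exactly where your proposal has a genuine gap.

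You propose to close the loop by ``carrying the zeroth- and first-order unknowns together and proving joint precompactness,'' but you give no mechanism for doing so. The a priori facts you do invoke are not enough: $1$-Lipschitzness of $f_\varepsilon,g_\varepsilon$ only gives $\nabla f_\varepsilon,\nabla g_\varepsilon=O(1)$, hence $A^f_\varepsilon,A^g_\varepsilon=O(\varepsilon^{-1/2})$, which is useless; and the estimate $\int_{\mathcal Y}e^{-I(x,y)/\varepsilon}\,\mathrm d\nu(y)\asymp\varepsilon^{(d-1)/2}$ does \emph{not} transfer to $\int_{\mathcal Y}e^{-I(x,y)/\varepsilon}e^{\tilde g_\varepsilon(y)}\,\mathrm d\nu(y)$ without first controlling how $\tilde g_\varepsilon$ grows transversally at scale $\sqrt\varepsilon$, which is precisely what $A^g_\varepsilon$ governs. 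So the step ``the second fact shows $\tilde f_\varepsilon+\frac{d-1}{2}\log\varepsilon$ and $\tilde g_\varepsilon$ are uniformly bounded'' is circular as written, and nothing you supply substitutes for the missing a priori bound on $A^g_\varepsilon$.

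The paper breaks the cycle with an ingredient entirely absent from your proposal: Lemma~\ref{lem:entropic-cost-expansion}, the global lower bound $v_\varepsilon-v_0\ge\frac{d-1}{2}\varepsilon\log(1/\varepsilon)+O(\varepsilon)$, which is proved from the dual formulation with the \emph{Kantorovich} (not entropic) potentials and hence is independent of $\tilde f_\varepsilon,\tilde g_\varepsilon$. Feeding this into~\eqref{eq:f_g}, then combining with the ray-level fixed-point identity~\eqref{eq:fg_combined} from Lemma~\ref{lem:constant-on-strips}, Jensen's inequality, and an integration against an optimal plan, one arrives at the bound~\eqref{eq:Theta_1&2_bound} on $\int\log\mathbb E[\Theta^1_\varepsilon]\,\mathrm d\mu+\int\log\mathbb E[\Theta^2_\varepsilon]\,\mathrm d\nu$; since $\Theta^1_\varepsilon$ carries the factor $e^{\|A^g_\varepsilon\|^2\|x-y\|/2}$, this forces $A^g_\varepsilon$ (and by symmetry $A^f_\varepsilon$) to be bounded a.e.\ along a.e.\ ray. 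Only then do the Laplace expansions make sense, tightness follows, and the stability theorem of~\cite{stability} identifies the limit in part~(a). Your proposal would need this (or an equivalent) unconditional energy/cost lower bound to be a proof.

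A smaller issue: in part~(b) your constant $c=\frac12\lim\|v_\varepsilon\|^2/\varepsilon$ disagrees by a factor of $2$ with the paper's $c=\lim\|v_\varepsilon\|^2/\varepsilon$ appearing in the proof sketch (and the claim that the two constants in~(a) and~(b) coincide would still need a line of justification once the expansion of $\Theta^1_\varepsilon$ in~\eqref{eq:Theta_1} is matched against the kernel in~\eqref{eq:mathfrak_f}).
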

 We establish the above theorem by relying on the following three key lemmas. First, we state these lemmas and then proceed with the proof of Theorem~\ref{lem:Deri_Conv}. The proofs of the lemmas will be presented subsequently.

\begin{lemma}\label{lem:const-derivative}
Let $\mathcal{R}$ be a transport ray with a direction vector $\nu\in \R^{d}$ and let $(x,y)\in \mathcal{X}\times \mathcal{Y}$ be interior points on $\mathcal{R}$ such that $u(x)-u(y)=\|x-y\|$. Let $\{e_1,\cdots,e_{n-1},e_{n}\}$ be an orthonormal frame with $e_n$ is the direction of the transport ray $\mathcal{R}$. Then as $\varepsilon\to 0$ we have
\begin{align}
\partial_{i}f_{\varepsilon}(x) &\to  c,\quad \partial_{j}g_{\varepsilon}(y) \to -c
\end{align}
for indices $1\leq i,j\leq n-1$ and $c$ is some constant independent of $i,j.$
\end{lemma}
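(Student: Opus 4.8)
The plan is to analyze the first-order optimality (Schrödinger/Sinkhorn) conditions for the entropic potentials $(f_\varepsilon, g_\varepsilon)$ and differentiate them in directions orthogonal to the transport ray $\mathcal{R}$. Recall that $(f_\varepsilon,g_\varepsilon)$ satisfy the fixed-point relations coming from \eqref{defn:dual-v_eps}, namely $f_\varepsilon(x) = -\varepsilon \log \int_{\mathcal Y} e^{(g_\varepsilon(y)-\|x-y\|)/\varepsilon}\,\mathrm d\nu(y)$ and the symmetric identity for $g_\varepsilon$. Writing $\tilde f_\varepsilon = (f_\varepsilon + u)/\varepsilon$, $\tilde g_\varepsilon = (g_\varepsilon - u)/\varepsilon$ and using $\tilde I(x,y) = \|x-y\| - u(x) + u(y) \geq 0$ with equality exactly on transport rays, these become $e^{-\tilde f_\varepsilon(x)} = \int_{\mathcal Y} e^{\tilde g_\varepsilon(y)} e^{-\tilde I(x,y)/\varepsilon}\,\mathrm d\nu(y)$. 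Differentiating in an orthogonal direction $e_i$ ($1\le i\le n-1$) at an interior ray point $x$ gives
\begin{align*}
\partial_i f_\varepsilon(x)\, e^{-\tilde f_\varepsilon(x)} = \frac{1}{\varepsilon}\int_{\mathcal Y} e^{\tilde g_\varepsilon(y)} e^{-\tilde I(x,y)/\varepsilon}\, \partial_{x_i}\big(u(x) - \|x-y\|\big)\,\mathrm d\nu(y),
\end{align*}
so $\partial_i f_\varepsilon(x)$ is a conditional expectation (under the probability measure on $\mathcal Y$ with density proportional to $e^{\tilde g_\varepsilon(y)}e^{-\tilde I(x,y)/\varepsilon}$, which is exactly $\pi^\varepsilon_x$) of the quantity $\partial_{x_i}(u(x)-\|x-y\|)$.

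The key structural input is the quadratic expansion of $\tilde I$ transverse to the ray, already recorded in the proof sketch and in Lemma~\ref{lemma:Main}: near a ray point, $\tilde I(x + \sqrt\varepsilon x^\perp, y+\sqrt\varepsilon y^\perp) \approx \varepsilon\,\|J(x)x^\perp - J(y)y^\perp\|^2/\|x-y\|$. Hence under $\pi^\varepsilon_x$ the transverse fluctuations of $y$ are of order $\sqrt\varepsilon$ and are asymptotically Gaussian, which is what makes the conditional expectations converge. Concretely, $\partial_{x_i}(u(x)-\|x-y\|) = (\chi(x))_i - (x-y)_i/\|x-y\|$; along the ray this vanishes, and its transverse gradient is $O(1)$, so after rescaling the $\sqrt\varepsilon$-Gaussian fluctuations the limit of $\partial_i f_\varepsilon(x)$ is a fixed linear functional of the limiting Gaussian mean. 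First I would make this precise: show $A^f_\varepsilon(x) = \varepsilon^{-1/2}(\partial_i f_\varepsilon(x))_i$ is, up to $o(1)$, a deterministic affine function of the limiting transverse drift $v_\varepsilon/\sqrt\varepsilon$ of the Schrödinger bridge, and symmetrically for $A^g_\varepsilon(y)$.

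The heart of the argument — and the main obstacle — is showing that this common limit is (i) finite, (ii) independent of the index $i$, and (iii) independent of the base point $x$ along $\mathcal R$, i.e.\ that $A^f_\varepsilon(x) \to -c(1,\dots,1)$ with a single scalar $c$. Index-independence and $x$-independence should follow from a self-consistency/coupling argument: writing the analogous identity for $\partial_j g_\varepsilon(y)$ and combining the two (using that $\pi^\varepsilon$ couples $x$ and $y$ along the same ray) forces $\partial_i f_\varepsilon(x) + \partial_i g_\varepsilon(y) \to 0$ in each transverse direction, and then the rotational symmetry of the Gaussian limit in the $(d-1)$ transverse coordinates — combined with a choice of orthonormal frame diagonalizing the relevant Jacobian quadratic form, or an averaging argument over frames — collapses the limit to a scalar multiple of $(1,\dots,1)$. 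Finiteness/tightness of $\varepsilon^{-1/2}\nabla f_\varepsilon$ is the delicate analytic point; I would obtain it from a priori Lipschitz/gradient bounds on $f_\varepsilon$ (uniform in $\varepsilon$, using compactness of $\mathcal X,\mathcal Y$ and the quantitative separation $\operatorname{dist}(\mathcal X,\mathcal Y)>c_0$) together with the lower-bound large-deviation estimate of Lemma~\ref{claim:lower bound}, which controls the mass that $\pi^\varepsilon_x$ places away from the ray and hence prevents the rescaled drift from blowing up. Once tightness is in hand, any subsequential limit satisfies the self-consistency relation above, which pins it down uniquely, giving the claimed convergence with a single constant $c$; the sign asymmetry $+c$ versus $-c$ between $f$ and $g$ comes directly from the $+u(x)$ versus $-u(y)$ asymmetry in $\tilde I$.
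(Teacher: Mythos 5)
Your overall strategy---differentiate the Sinkhorn fixed-point relation, localize near the ray, use the quadratic expansion of $I$ transverse to the ray, and combine the relations for $\nabla f_\varepsilon$ and $\nabla g_\varepsilon$---is the same as the paper's. The paper implements the differentiation via a finite difference $x_\delta=\operatorname{Proj}_{S^i_p}(x+\delta\chi^\perp)$ along a level set and the Lipschitz coordinates of Lemma~\ref{lem:lip-change-of-variable} rather than by a direct derivative of the integral, but that is a technical rather than a conceptual difference.

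The genuine gap is in your argument for why the limit is a single scalar, i.e.\ independent of the index $i$ and of the base point. You invoke ``rotational symmetry of the Gaussian limit in the $(d-1)$ transverse coordinates,'' possibly after ``diagonalizing the relevant Jacobian quadratic form.'' But the quadratic form
\begin{align}
\frac{\bigl\|\bigl(\overline{\partial_{\mathfrak{x}}F(\mathfrak{x})}\bigr)\mathfrak{x}^\perp-\bigl(\overline{\partial_{\mathfrak{y}}F(\mathfrak{y})}\bigr)\mathfrak{y}^\perp\bigr\|^2}{2\,\|x-y\|}
\end{align}
involves the Jacobians of the ray-cluster coordinate change $F$ at two different points, and is not rotationally invariant in the fixed frame $\{e_1,\dots,e_{n-1}\}$ adapted to $\mathcal R$; diagonalizing it introduces its own non-trivial frame, so there is no symmetry that would force all components of $\nabla^\perp f_\varepsilon$ to coincide. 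The paper closes this gap in an entirely different way: iterating the two conditional-expectation identities gives
\begin{align}
\inner{\nabla f_\varepsilon(x)}{\chi^\perp}=\mathbb{E}_{\zeta^{f_\varepsilon}}\mathbb{E}_{\zeta^{g_\varepsilon}}\bigl[\inner{\nabla f_\varepsilon}{\chi'}\bigr]+O(\delta),
\end{align}
i.e.\ $F_\varepsilon(x,\chi^\perp):=\inner{\nabla f_\varepsilon(x)}{\chi^\perp}$ equals a weighted average of its own values up to $O(\delta)$ over the compact set $\mathcal X\times\mathbb S^{d-1}$. Evaluating this at a minimizer forces the minimum to equal the average up to $O(\delta)$, so $F_\varepsilon$ is within $O(\delta)$ of a constant over both the base point and the direction simultaneously; letting $\delta=\varepsilon^{1/2-\kappa}\to 0$ pins the limit down, and the $\pm c$ relation between $f_\varepsilon$ and $g_\varepsilon$ then falls out of \eqref{eqn:derivative identity}. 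This maximum-principle-type self-consistency argument is the substantive content of the paper's proof and is not recoverable from symmetry considerations.

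A second, smaller issue: you describe the ``delicate analytic point'' as tightness of $\varepsilon^{-1/2}\nabla f_\varepsilon$ and frame the limit as an affine function of $v_\varepsilon/\sqrt\varepsilon$. Those objects belong to the proof of Theorem~\ref{lem:Deri_Conv}, not to this lemma. Lemma~\ref{lem:const-derivative} concerns the unrescaled transverse derivatives $\partial_i f_\varepsilon$, whose uniform boundedness is immediate from $1$-Lipschitzness of the dual potentials; the $\varepsilon^{-1/2}$-tightness is established separately in Theorem~\ref{lem:Deri_Conv} via \eqref{eq:Theta_1&2_bound} and Lemma~\ref{lem:entropic-cost-expansion}, with Lemma~\ref{lem:const-derivative} feeding in only at the very end to identify the limit as $\pm c(1,\dots,1)$.

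Finally, a minor computational remark: in your differentiated identity the prefactor $1/\varepsilon$ should cancel against the $\varepsilon$ coming from $\tilde f_\varepsilon=(f_\varepsilon+u)/\varepsilon$, and the left-hand side picks up $\partial_i u(x)$, which vanishes in the transverse directions because $\nabla u=\chi$ along a ray; as written the bookkeeping is off, though harmlessly so.
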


\begin{lemma}\label{lem:constant-on-strips}
 There exists continuous bounded functions $\Theta^i_\varepsilon:\R^n\times \R\to \R$, $i=1,2$ such that the following identities hold
\begin{align}
\Tilde{f}_\varepsilon(x) &= -\frac{(d-1)}{2}\log\varepsilon  -\log\left(\int_{\mathcal{R}_x} \Theta^1_\varepsilon(x,y_x)e^{\Tilde{g}_\varepsilon(y_x)} \mathrm{d}\nu_{\llcorner \mathcal{R}_x}(y_x)\right)  + o_{\varepsilon}(1) \label{eqn:tilde-f}\\
\Tilde{g}_\varepsilon(y) &= -\frac{(d-1)}{2}\log\varepsilon  - \log\left(\int_{\mathcal{R}_y} \Theta^2_\varepsilon(x_y,y)e^{\Tilde{f}_\varepsilon(x_y)} \mathrm{d}\mu_{\llcorner \mathcal{R}_y}(x_y)\right) +  o_{\varepsilon}(1) \quad \quad
\label{eqn:tilde-g}
\end{align}
where $\Tilde{f}_\varepsilon(x)=\frac{f_\varepsilon(x)+u(x)}{\varepsilon}$, $\Tilde{g}_\varepsilon(y)=\frac{g_\varepsilon(y)-u(y)}{\varepsilon}$ for a.e. $x\in \mathcal{X}$ and $y\in \mathcal{Y}$. 
\end{lemma}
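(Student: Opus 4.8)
The plan is to read the two claimed identities off the Schr\"odinger fixed point equations for the EOT potentials and then carry out a Laplace-type (transverse Gaussian) analysis of the resulting integrals, whose mass localizes as $\varepsilon\to0$ on the zero set of $I$ --- exactly the transport ray through the base point. Concretely, integrating the density $e^{-I(x,y)/\varepsilon+\tilde f_\varepsilon(x)+\tilde g_\varepsilon(y)}$ of \eqref{eq:pi_epsilon_measure} in $y$ against $\nu$ and using $(\operatorname{proj}_X)_\#\pi^\varepsilon=\mu$ gives, for $\mu$-a.e.\ $x$,
\[
e^{-\tilde f_\varepsilon(x)}=\int_{\mathcal Y}e^{\tilde g_\varepsilon(y)-I(x,y)/\varepsilon}\,\mathrm d\nu(y),
\]
and symmetrically $e^{-\tilde g_\varepsilon(y)}=\int_{\mathcal X}e^{\tilde f_\varepsilon(x)-I(x,y)/\varepsilon}\,\mathrm d\mu(x)$, where $I\ge0$ and $\{I=0\}$ is the union of transport rays. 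The two relations are symmetric, so it suffices to analyze the first and produce $\Theta^1_\varepsilon$; then $\Theta^2_\varepsilon$ and \eqref{eqn:tilde-g} come out verbatim from the second.

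\textbf{A priori control and localization.} Fix $\mu$-a.e.\ $x$, which lies in the interior of a unique ray $\mathcal R_x\subset T^0_{pij}$. The first step is to establish, by feeding the two fixed point relations into one another and using $f_\varepsilon\to-u$, $g_\varepsilon\to u$ (\cite{Nutz-Wiesel}) together with the transverse-derivative statement of Lemma~\ref{lem:const-derivative}, the crude bounds $|\tilde f_\varepsilon+\tfrac{d-1}{2}\log\varepsilon|\le C$, $|\tilde g_\varepsilon+\tfrac{d-1}{2}\log\varepsilon|\le C$ and $|A^f_\varepsilon|,|A^g_\varepsilon|\le C$ on compact subsets of ray interiors. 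Setting $\delta=\varepsilon^{1/2-\kappa}$ and splitting $\mathcal Y$ into the tube of transverse radius $\delta$ about $\mathcal R_x$ and its complement, the quadratic lower bound $I(x,y)\gtrsim\operatorname{dist}(y,\mathcal R_x)^2$ off the ray (the large-deviation estimate, Lemma~\ref{claim:lower bound}) combined with the crude upper bound on $\tilde g_\varepsilon$ makes the complementary contribution $o(\varepsilon^N)$ for every $N$, hence negligible next to $e^{-\tilde f_\varepsilon(x)}$, which will turn out to be of exact order $\varepsilon^{(d-1)/2}$.

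\textbf{Transverse Gaussian expansion and assembly of $\Theta^1_\varepsilon$.} On the tube apply the Lipschitz coordinates $F=F_{pij},\,G=G_{pij}$ of Lemma~\ref{lem:lip-change-of-variable}: write $y=F(\mathfrak y^\perp,\mathfrak y_d)$ with $\mathfrak y_d$ the arclength along the ray (so the level sets of $u$ are $\{\mathfrak y_d=\text{const}\}$ and $u(y)$ depends on $\mathfrak y_d$ only), $|\mathfrak y^\perp|\le\delta$ transverse, and $\mathrm d\nu(y)=q(y)\,|\det DF|\,\mathrm d\mathfrak y^\perp\,\mathrm d\mathfrak y_d$. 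By a.e.\ differentiability of $F$ and \eqref{eq:F_decompose}, expand as in \eqref{eqn:I-taylor}: $I(x,y)=\tfrac{1}{2\|x-y_{\mathfrak y_d}\|}\big\|\overline{\tfrac{\partial F(\mathfrak y)}{\partial\mathfrak y}}\,\mathfrak y^\perp\big\|^2+O(|\mathfrak y^\perp|^3)$ and $\tilde g_\varepsilon(y)=\tilde g_\varepsilon(y_{\mathfrak y_d})+\tfrac{1}{\sqrt\varepsilon}\big\langle A^g_\varepsilon(y_{\mathfrak y_d}),\,\overline{\tfrac{\partial F(\mathfrak y)}{\partial\mathfrak y}}\,\mathfrak y^\perp\big\rangle+o_\varepsilon(1)$, with $y_{\mathfrak y_d}:=F(0,\mathfrak y_d)\in\mathcal R_x$. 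Substituting $\mathfrak y^\perp=\sqrt\varepsilon\,\zeta$ pulls out a factor $\varepsilon^{(d-1)/2}$ and turns the transverse integral into a Gaussian integral over the ball $\{|\zeta|\le\varepsilon^{-\kappa}\}\subset\R^{d-1}$ (which exhausts $\R^{d-1}$ as $\varepsilon\to0$) with quadratic form $\tfrac12\langle\zeta,H_{\mathfrak y_d}\zeta\rangle$, $H_{\mathfrak y_d}=\|x-y_{\mathfrak y_d}\|^{-1}\big(\overline{\tfrac{\partial F}{\partial\mathfrak y}}\big)^{\!\top}\overline{\tfrac{\partial F}{\partial\mathfrak y}}$, and linear term $\langle A^g_\varepsilon,\overline{\tfrac{\partial F}{\partial\mathfrak y}}\,\zeta\rangle$; completing the square evaluates it, up to $(1+o_\varepsilon(1))$, to $(2\pi)^{(d-1)/2}(\det H_{\mathfrak y_d})^{-1/2}\exp\big(\tfrac12\langle A^g_\varepsilon,(\overline{\partial F/\partial\mathfrak y})H_{\mathfrak y_d}^{-1}(\overline{\partial F/\partial\mathfrak y})^{\!\top}A^g_\varepsilon\rangle\big)$. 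Collecting prefactors: the Gaussian normalization $(2\pi)^{(d-1)/2}(\det H_{\mathfrak y_d})^{-1/2}$ together with the transverse part of $|\det DF|$ leaves $(2\pi\|x-y_{\mathfrak y_d}\|)^{(d-1)/2}$; the longitudinal part of $q(y_{\mathfrak y_d})\,|\det DF|\,\mathrm d\mathfrak y_d$ is a ray-dependent multiple of $\mathrm d\nu_{\llcorner\mathcal R_x}$ by Definition~\ref{defn:slice-mu}; and the completed-square exponential is a bounded, positive, continuous function of $(x,y_{\mathfrak y_d})$ thanks to $|A^g_\varepsilon|\le C$. Setting $\Theta^1_\varepsilon(x,y_x)$ equal to the product of these factors --- continuous and bounded, $\varepsilon$-dependent only through $A^g_\varepsilon$ and the $O(\delta)$ corrections --- yields
\[
e^{-\tilde f_\varepsilon(x)}=\varepsilon^{(d-1)/2}\Big(\int_{\mathcal R_x}\Theta^1_\varepsilon(x,y_x)\,e^{\tilde g_\varepsilon(y_x)}\,\mathrm d\nu_{\llcorner\mathcal R_x}(y_x)\Big)\big(1+o_\varepsilon(1)\big),
\]
and taking $-\log$ gives \eqref{eqn:tilde-f}; the symmetric computation gives \eqref{eqn:tilde-g} and $\Theta^2_\varepsilon$.

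\textbf{Main obstacle.} The crux is that the integral concentrates at precisely the scale $\sqrt\varepsilon$ at which $e^{\tilde g_\varepsilon}$ is of order $\varepsilon^{-(d-1)/2}\to\infty$, so every error in the transverse expansions of $I$ and of $\tilde g_\varepsilon$ has to beat $\varepsilon^{(d-1)/2}$ --- and this must be done with only the Lipschitz, a.e.-differentiable regularity of the coordinate map $F$ and of $u$; hence the Taylor expansions above must in fact be executed as uniform second-difference estimates over the ray cluster, after discarding a null set of rays (this is the source of the qualifier ``for a.e.\ transport ray''). Equally essential and intertwined with it is the a priori bound $|A^g_\varepsilon|\le C$: it does not follow from Lemma~\ref{lem:const-derivative} alone, and is obtained by running the present lemma and Lemma~\ref{lem:const-derivative} together --- which is exactly why both are fed simultaneously into Theorem~\ref{lem:Deri_Conv} rather than used in sequence.
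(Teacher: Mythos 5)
Your proof is correct and follows essentially the same route as the paper: plug the Schr\"odinger fixed-point identity $e^{-\tilde f_\varepsilon(x)}=\int e^{\tilde g_\varepsilon - I/\varepsilon}\,\mathrm d\nu$ into the $F,G$ coordinates of Lemma~\ref{lem:lip-change-of-variable}, discard the far field via the large-deviation lower bound on $I$, Taylor-expand $I$ and $\tilde g_\varepsilon$ transversally, rescale $\mathfrak y^\perp=\sqrt\varepsilon\,\zeta$ to extract $\varepsilon^{(d-1)/2}$, integrate the Gaussian, and read off $\Theta^1_\varepsilon$; the paper's \eqref{eq:Theta_1} is exactly the completed-square form you wrote. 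One thing you handle with more care than the paper's exposition does: you correctly note that boundedness of $\Theta^1_\varepsilon$ cannot be read off this computation alone --- it hinges on an a priori bound for $A^g_\varepsilon$, which the paper only establishes later in the proof of Theorem~\ref{lem:Deri_Conv} by feeding this very lemma through Jensen's inequality, so the argument is genuinely a joint bootstrap. Your flagging of the a.e.-differentiability of the Lipschitz coordinates as the source of the a.e.\ qualifier is also the right reading.
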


\begin{lemma}\label{lem:entropic-cost-expansion}
Let $v_\varepsilon$ be defined as in \eqref{defn:dual-v_eps}. Then for $\varepsilon>0$ small enough we have 
\begin{align}
    v_{\varepsilon}-v_{0} \geq \frac{(d-1)}{2}\varepsilon \log(1/\varepsilon)+O(\varepsilon).
\end{align}
\end{lemma}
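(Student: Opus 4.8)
\textbf{Proof proposal for Lemma~\ref{lem:entropic-cost-expansion}.}

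The plan is to produce the lower bound on $v_\varepsilon - v_0$ by inserting a \emph{carefully chosen suboptimal pair} $(f_\varepsilon, g_\varepsilon)$ into the dual functional in \eqref{defn:dual-v_eps} and estimating the resulting log-integral from below. Since $v_\varepsilon$ is a supremum, any admissible choice yields a lower bound, so the whole argument reduces to exhibiting a good competitor and controlling the exponential integral $\int_{\mathcal{X}\times\mathcal{Y}} e^{(f_\varepsilon(x)+g_\varepsilon(y)-\|x-y\|)/\varepsilon}\,\d\mu(x)\,\d\nu(y)$. The natural competitor is the Kantorovich pair: take $f_\varepsilon = -u$ on $\mathcal{X}$ and $g_\varepsilon = u$ on $\mathcal{Y}$, so that the linear part $\int f_\varepsilon\,\d\mu + \int g_\varepsilon\,\d\nu = v_0$, and the correction term is exactly $-\varepsilon\log \int_{\mathcal{X}\times\mathcal{Y}} e^{(u(y)-u(x)-\|x-y\|)/\varepsilon}\,\d\mu\,\d\nu = -\varepsilon \log \int e^{-I(x,y)/\varepsilon}\,\d\mu\,\d\nu$, where $I(x,y) = \|x-y\| - u(x) + u(y) \ge 0$ is the same nonnegative functional appearing throughout the paper. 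Thus $v_\varepsilon - v_0 \ge -\varepsilon \log \int_{\mathcal{X}\times\mathcal{Y}} e^{-I(x,y)/\varepsilon}\,\d\mu(x)\,\d\nu(y)$, and it remains to show the integral is $O(\varepsilon^{(d-1)/2})$, which upon taking $-\varepsilon\log(\cdot)$ produces $\tfrac{d-1}{2}\varepsilon\log(1/\varepsilon) + O(\varepsilon)$.

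The core estimate is therefore the upper bound $\int_{\mathcal{X}\times\mathcal{Y}} e^{-I(x,y)/\varepsilon}\,\d\mu\,\d\nu \lesssim \varepsilon^{(d-1)/2}$. I would prove this by a Laplace-type / coarea argument localized near the transport rays, using the machinery already set up in Section~\ref{sec:l1-ot}. Cover $T_1$ by the countably many ray clusters $T_{pij}$ (Lemma~\ref{lem:lip-change-of-variable}) and work in the Lipschitz coordinates $G = G_{pij}$, in which a transport ray becomes a coordinate segment and $I$ vanishes exactly on the $d$-dimensional ``diagonal'' locus $\{(x,y): x,y \text{ on the same ray}\}$. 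The Taylor expansion \eqref{eqn:I-taylor} gives $I(x,y) \gtrsim \|\bar{\mathfrak{x}}^\perp - \bar{\mathfrak{y}}^\perp\|^2 / \|x-y\|$ for points at transverse distance $O(\sqrt\varepsilon)$ from the ray; more globally one needs a quantitative lower bound $I(x,y) \gtrsim \operatorname{dist}((x,y), \text{ray locus})^2$, which should follow from Lemma~\ref{lem:lip-control} (the Lipschitz control on ray directions on a common level set) exactly as in the proof of Lemma~\ref{lemma:Main}. Integrating $e^{-c\|\bar{\mathfrak{x}}^\perp - \bar{\mathfrak{y}}^\perp\|^2/\varepsilon}$ over the $2(d-1)$ transverse directions gives a factor $\varepsilon^{(d-1)/2}$ (one Gaussian integral of dimension $d-1$, after diagonalizing the quadratic form in the combined transverse variable), while the two ``longitudinal'' ray parameters and the $(d-1)$-dimensional level-set parameter integrate over a bounded region, contributing $O(1)$; the contribution of the part of $\mathcal{X}\times\mathcal{Y}$ staying a fixed distance away from all rays is exponentially small in $1/\varepsilon$ since there $I \ge c_1 > 0$ (this uses $\mathcal{X}\cup\mathcal{Y}\subseteq T_0\cup T_1$ together with compactness; the length-zero set $T_0$ contributes nothing because $\mu,\nu$ are mutually supported on disjoint compact sets so the relevant pairs are bounded away from the diagonal, or is handled by the same distance bound). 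Summing the $O(\varepsilon^{(d-1)/2})$ contributions over the countable cover requires a uniform constant, which one gets by noting only finitely many clusters carry mass up to an error that is itself $O(\varepsilon^{(d-1)/2})$, or by a dominated-convergence/tightness argument.

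The main obstacle I anticipate is making the ``global'' quadratic lower bound $I(x,y) \gtrsim \operatorname{dist}^2$ genuinely uniform: near the \emph{ends} of transport rays the constant $\sigma$ in Lemma~\ref{lem:lip-control} degenerates and the coordinate map $G_{pij}$ is only Lipschitz on $T_{pij}^\lambda$, so the Gaussian-integral heuristic can fail in a $\lambda$-neighborhood of ray endpoints. The standard fix — and what I would do — is to introduce the truncation parameter $\lambda$ (as in the definition of $T_{pij}^\lambda$ and $T_{pij}^j$), prove the bound with a constant depending on $\lambda$, control the leftover near-endpoint region by the crude estimate $e^{-I/\varepsilon}\le 1$ times its (small) measure, and then optimize; since the lemma only asserts a \emph{lower} bound on $v_\varepsilon - v_0$ with an $O(\varepsilon)$ slack, we have room to absorb these truncation errors. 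A secondary technical point is justifying that the expansion \eqref{eqn:I-taylor}, stated in the paper for the specific scaling $\delta = \varepsilon^{1/2-\kappa}$, can be upgraded to (or replaced by) a clean one-sided inequality valid on the whole transverse slab; this is routine since $F$ is Lipschitz with the stated submatrix derivatives, but it should be stated as a separate sub-claim to keep the Laplace estimate clean.
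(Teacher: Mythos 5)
Your proposal is correct and takes essentially the same route as the paper: it uses the Kantorovich pair $(f_K,g_K)$ as the dual competitor, reduces to the Laplace-type estimate $\int_{\mathcal{X}\times\mathcal{Y}}e^{-I/\varepsilon}\,\d\mu\otimes\d\nu\lesssim\varepsilon^{(d-1)/2}$, and proves it by localizing to the ray clusters $T_{pij}$ in the Lipschitz coordinates of Lemma~\ref{lem:lip-change-of-variable} and performing a Gaussian integral in the transverse directions, with the far-from-ray contribution controlled by the quantitative lower bound on $I$ (the paper invokes Lemma~\ref{claim:lower bound}, which plays the role of your ``$I\gtrsim\operatorname{dist}^2$'' sub-claim). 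You correctly identify the two genuine technical issues — uniformity across the countable family of clusters and degeneracy of the ray-direction Lipschitz constant $\sigma$ near ray endpoints — which the paper also handles through its cluster/refinement machinery, though somewhat implicitly; your suggested truncation-and-absorb strategy using the $O(\varepsilon)$ slack is a reasonable way to close that gap.
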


\begin{proof}[Proof of Theorem~\ref{lem:Deri_Conv}]
 We first show that $A^g_{\varepsilon}, A^f_{\varepsilon}$ are bounded. Using this fact, we prove later the convergence of  $(\tilde{f}_{\varepsilon} + \frac{(d-1)}{2}\log \varepsilon, \tilde{g}_{\varepsilon})$, $A^g_{\varepsilon}$, and $A^f_{\varepsilon}$. From Lemma \ref{lem:constant-on-strips},  for $x\in \mathcal{R}^\circ \cap \mathcal{X}$ and $y\in \mathcal{R}^\circ \cap \mathcal{Y}$,
\begin{align}\label{eq:fg_combined}
  &\Tilde{f}_{\varepsilon}(x) +\Tilde{g}_{\varepsilon}(y) +(d-1)\log(\varepsilon)  + \log\left(\int_{\mathcal{R}^\circ\cap \mathcal{Y}} \Theta^1_\varepsilon(x,y')e^{\Tilde{g}_\varepsilon(y)} q(y') \ \mathrm{d}y'\right) \\
  &\quad + \log\left(\int_{\mathcal{R}^{\circ}\cap \mathcal{X}} \Theta^2_\varepsilon(x',y)e^{\Tilde{f}_{\varepsilon}(x')} p(x') \ \mathrm{d}x'\right) + o_{\varepsilon}(1)= 0 \label{eq:ray_equation} 
\end{align}
where $\Tilde{f}_{\varepsilon}=\frac{f_{\varepsilon}+u}{\varepsilon}$ and $\Tilde{g}_\varepsilon=\frac{g_{\varepsilon}-u}{\varepsilon}$. From Lemma \ref{lem:entropic-cost-expansion}, we know that 
\begin{align}\label{eq:f_g}
   \int \Tilde{f}_{\varepsilon}(x) \ \mathrm{d}\mu(x) +\int \Tilde{g}_{\varepsilon}(y) \ \mathrm{d}\nu(y)  \geq -\frac{(d-1)}{2} \log \varepsilon + O(1).
\end{align}
Thus we get
\begin{align}
    \frac{(d-1)}{2}\log \varepsilon & + \int \log\left(\int_{\mathcal{R}^\circ_x\cap \mathcal{Y}} \Theta^1_\varepsilon(x,y')e^{\Tilde{g}_{\varepsilon}} q(y')dy'\right)\mathrm{d}\mu(x) \\ & + \int \log\left(\int_{\mathcal{R}^\circ_y\cap \mathcal{X}} \Theta^2_\varepsilon(x',y)e^{\Tilde{f}_{\varepsilon}} p(x')dx'\right) \mathrm{d}\nu(y) \leq O(1),
\end{align}
where $\mathcal{R}^\circ_x$ denotes the interior of the transport containing $x$. 
Now writing the integrals in the logarithms as expectations with respect to the measures $\Theta^i_\varepsilon$ for $i=1,2$ we get
\begin{align}
    &\frac{(d-1)}{2}\log \varepsilon + \int\log \mathbb{E}_{\Theta^1_\varepsilon}\big[e^{\Tilde{g}_{\varepsilon}(Y)}\big] \mathrm{d}\mu(x) +\int \Big(\log \int_{\mathcal{R}^\circ_x\cap \mathcal{Y} } \Theta^1_\varepsilon(x,y')q(y') \mathrm{d}y'\Big) \mathrm{d}\mu(x)  \\
    &\quad + \int\log \mathbb{E}_{\Theta^2_\varepsilon}\big[e^{\Tilde{f}_{\varepsilon}(X)}\big] \mathrm{d}\nu(y) +\int \log \Big(\int_{\mathcal{R}^\circ_y\cap \mathcal{X} } \Theta^2_\varepsilon(x',y)p(x') \mathrm{d} x'\Big) \mathrm{d}\nu(y)\leq O(1),
\end{align}
where $X$ and $Y$ are two random variables supported on $\mathcal{R}^{\circ}_x\cap \mathcal{X}$ and $\mathcal{R}^{\circ}_y \cap \mathcal{Y}$ with densities proportional to $\Theta^2_{\varepsilon}(\cdot, y)$ and $\Theta^2_{\varepsilon}(x, \cdot)$ with respect to $\mu_{{\llcorner \mathcal{R}^{\circ}}_x}$ and $\nu_{{\llcorner \mathcal{R}^{\circ}}_y}$ respectively. Applying Jensen's inequality, we thus have
\begin{align}
    &\frac{(n-1)}{2}\log \varepsilon +  \int \mathbb{E}_{\Theta^1_\varepsilon}\left[\Tilde{g}_{\varepsilon}(Y)\right] \mathrm{d}\mu(x) +\int\Big(\log \int_{\mathcal{R}^\circ_x\cap \mathcal{Y} } \Theta^1_\varepsilon(x,y') q(y')dy'\Big)\mathrm{d}\mu(x)   \\ &+ \int\mathbb{E}_{\Theta^2_\varepsilon}\left[\Tilde{f}_{\varepsilon}(X)\right]\mathrm{d}\nu(x) +\int \Big(\log \int_{\mathcal{R}^\circ_y\cap \mathcal{X} } \Theta^2_\varepsilon(x',y)p(x') dx'\Big)\mathrm{d}\nu(y)\leq O(1).
\end{align}
Observe that
$$ \int \mathbb{E}_{\Theta^1_\varepsilon}\left[\Tilde{g}_{\varepsilon}(Y)\right] \mathrm{d}\mu(x) + \int\mathbb{E}_{\Theta^2_\varepsilon}\left[\Tilde{f}_{\varepsilon}(X)\right]\mathrm{d}\nu(x) = \int(\Tilde{f}_{\varepsilon}(x) + \Tilde{g}_{\varepsilon}(y)) \ \mathrm{d}\hat{\pi}^{\mathrm{opt}}(x,y)$$
for some optimal transport plan $\hat{\pi}^{\mathrm{opt}}$. 
Due to the inequality in \eqref{eq:f_g}, we get $\int\mathbb{E}_{\Theta^1_\varepsilon}\left[\Tilde{g}_{\varepsilon}(Y)\right] \mathrm{d}\mu(x) + \int \mathbb{E}_{\Theta^2_\varepsilon}\big[\Tilde{f}_{\varepsilon}(X)\big] \mathrm{d}\nu(y) + \frac{1}{2}(d-1)\log \varepsilon\geq O(1)$. Using this inequality in the above display yields
\begin{align}\label{eq:Theta_1&2_bound}
   \int \log \Big(\mathbb{E}_{\nu_{\llcorner \mathcal{R}^\circ_x}}\big[\Theta^1_\varepsilon(x,\cdot)\big] \Big) \mathrm{d}\mu(x)  + \int \log\Big( \mathbb{E}_{\mu_{\llcorner \mathcal{R}^\circ_y}}\big[\Theta^2_\varepsilon(\cdot,y)\big]\Big)\mathrm{d}\nu(y)\leq O_{\varepsilon}(1).\quad 
\end{align}
We claim that the above implies $A^{g}_{\varepsilon}$ and $A^{f}_{\varepsilon}$ are almost everywhere bounded with respect to $\nu_{\llcorner \mathcal{R}^{\circ}}$ and $\mu_{\llcorner \mathcal{R}^{\circ}}$ respectively as $\varepsilon \to 0$ for almost all transport ray $\mathcal{R}$. We show the implication for $A^{g}_{\varepsilon}$. Other case follows similarly. Recall from \eqref{eq:Theta_1} of Lemma~\ref{lem:constant-on-strips} that $$\Theta^1_\varepsilon (x,y') = (2\pi \|x-y'\|)^{\frac{d-1}{2}} e^{\|A^g_{\varepsilon}(y')\|^2\|x-y'\|/2}e^{\inner{A^g_{\varepsilon}(y')}{x}}.$$ Plugging this expression into \eqref{eq:Theta_1&2_bound} indicates the almost everywhere boundedness of $A^{g}_{\varepsilon}$ with respect to $\nu_{\llcorner \mathcal{R}^{\circ}}$ as $\varepsilon \to 0$ for almost all transport ray $\mathcal{R}$.  

\noindent \textbf{Proof of (a)}: Recall that $I(x,y)= c(x,y) + u(x) -u(y)$. For $\delta>0$ small, from \eqref{eqn:localized-integral}, the following expansion holds when $\varepsilon \to 0$,
\begin{align}\label{eq:Duality}
\Tilde{f}_\varepsilon(x)+\log \int_{D_\delta} e^{\frac{-I(x,y)+g_{\varepsilon}(y)-u(y)}{\varepsilon}} \mathrm{d}\nu(y) + o_\varepsilon(1) = 0.
\end{align}
Recall the pair of functions $(F,G)$ from Lemma~\ref{lem:lip-change-of-variable}. Furthermore, expanding as in Lemma~\ref{lemma:Main}, we have
\begin{align}
\tilde{f}_\varepsilon(x) + \frac{d-1}{2} \log \varepsilon &=  -\log\left( \int_{G(D_\delta)} e^{-\frac{\|(\overline{\partial_{\mathfrak{x}} F(\mathfrak{x})})\mathfrak{x}^\perp - (\overline{\partial_{\mathfrak{y}}F(\mathfrak{y})}) \mathfrak{y}^\perp\|^2}{2\|F(\mathfrak{x})-F(\mathfrak{y})\|}+O(\delta)} e^{\Tilde{g}_{\varepsilon}(F(\mathfrak{y}+\sqrt{\varepsilon}\mathfrak{y}^\perp))} \right.\\
&\quad \quad \left.\times q(F(\mathfrak{y})) J(F(\mathfrak{y}))(1+O(\delta)) \ \mathrm{d}\mathfrak{y}_1^\perp \cdots \mathrm{d}\mathfrak{y}_{d-1}^\perp\mathrm{d}\mathfrak{y}_d\right) + o_\varepsilon(1)
\end{align}
and similarly,
\begin{align}
\Tilde{g}_\varepsilon(y)  &= -\log \varepsilon^{(d-1/2)}\left( \int_{G(A_\delta)} e^{-\frac{\|(\overline{\partial_{\mathfrak{x}} F(\mathfrak{x})})\mathfrak{x}^\perp - (\overline{\partial_{\mathfrak{y}}F(\mathfrak{y})}) \mathfrak{y}^\perp\|^2}{2\|F(\mathfrak{x})-F(\mathfrak{y})\|}+O(\delta)} e^{\Tilde{f}_{\varepsilon}(F(\mathfrak{x}+\sqrt{\varepsilon}\mathfrak{x}^\perp))} \right.\\
&\quad \quad \left.\times p(F(\mathfrak{x})) J(F(\mathfrak{x}))(1+O(\delta)) \ \mathrm{d}\mathfrak{x}_1^\perp \cdots \mathrm{d}\mathfrak{x}_{d-1}^\perp\mathrm{d}\mathfrak{x}_d\right) + o_\varepsilon(1)\\
&= -\log \left( \int_{G(A_\delta)} e^{-\frac{\|(\overline{\partial_{\mathfrak{x}} F(\mathfrak{x})})\mathfrak{x}^\perp - (\overline{\partial_{\mathfrak{y}}F(\mathfrak{y})}) \mathfrak{y}^\perp\|^2}{2\|F(\mathfrak{x})-F(\mathfrak{y})\|}+O(\delta)} e^{\Tilde{f}_{\varepsilon}(F(\mathfrak{x}+\sqrt{\varepsilon}\mathfrak{x}^\perp))+\frac{(d-1)}{2}\log \varepsilon} \right.\\
&\quad \quad \left.\times p(F(\mathfrak{x})) J(F(\mathfrak{x}))(1+O(\delta)) \ \mathrm{d}\mathfrak{x}_1^\perp \cdots \mathrm{d}\mathfrak{x}_{d-1}^\perp\mathrm{d}\mathfrak{x}_d\right) + o_\varepsilon(1).
\end{align}
The above two equations show that $\Tilde{f}_\varepsilon+\frac{(d-1)}{2}\log\varepsilon$ and $\Tilde{g}_\varepsilon$ are two entropic optimal transport potential for the entropic optimal transport problem between $\tilde{\mu}_{\varepsilon}=G_{\#}\mu_{\llcorner D_\delta}$ and $\tilde{\nu}_{\varepsilon}=G_{\#}\nu_{\llcorner D_\delta}$ in the space $G(A_\delta) \times G(D_\delta)$ with respect to the cost function $c_{\varepsilon} (x,y) = \frac{\|(\overline{\partial_{\mathfrak{x}} F})\mathfrak{x}^\perp-(\overline{\partial_{\mathfrak{y}}F})\mathfrak{y}^\perp\|^2}{2\|F(\mathfrak{x})-F(\mathfrak{y})\|}+ O(\varepsilon)$. Since $A^{f}_{\varepsilon}$ and $A^{g}_{\varepsilon}$ are uniformly bounded, $\Tilde{f}_{\varepsilon}(F(\mathfrak{x}+\sqrt{\varepsilon}\mathfrak{x}^\perp))$ and $\Tilde{g}_{\varepsilon}(F(\mathfrak{x}+\sqrt{\varepsilon}\mathfrak{x}^\perp))$ grow linearly in $\mathfrak{x}^\perp$ and $\mathfrak{y}^\perp$. As a result, the integrals in the above two displays are uniformly bounded on compact sets as $\varepsilon \to 0$. Thus $\Tilde{f}_\varepsilon+\frac{(d-1)}{2}\log\varepsilon$ and $\Tilde{g}_\varepsilon$ are tight on any compact subset of $\mathcal{R}^{\circ}$. Notice that  $\tilde{\mu}_{\varepsilon}$ weakly converges to $\mu_{\llcorner \mathcal{R}^{\circ}}\otimes \mathrm{Leb}(\mathbb{R}^{d-1})$ and $\tilde{\nu}_{\epsilon}$ weakly converges to $\nu_{\llcorner \mathcal{R}^{\circ}}\otimes \mathrm{Leb}(\mathbb{R}^{d-1})$ and furthermore, $c_{\varepsilon}(x,y)$ converges to $\frac{\|(\overline{\partial_{\mathfrak{x}} F})\mathfrak{x}^\perp-(\overline{\partial_{\mathfrak{y}}F})\mathfrak{y}^\perp\|^2}{2\|F(\mathfrak{x})-F(\mathfrak{y})\|}$ as $\varepsilon \to 0$. By using the tightness as argued before and  Theorem 2.5 in~\cite{stability}, we conclude that $\Tilde{f}_\varepsilon(x)+\frac{(d-1)}{2}\log\varepsilon$ and $\Tilde{g}_\varepsilon(y)$ jointly converge to the potential function $(\tilde{f}(x), \tilde{g}(y))$ for any $x,y$ connected by the transport ray $\mathcal{R}$ where $(\tilde{f}(x), \tilde{g}(y))$ are potential functions.

\noindent\textbf{Proof of (b):}  From part (a), we know $\Tilde{f}_{\varepsilon}+\frac{1}{2} (d-1)\log \varepsilon$ and $\Tilde{g}_{\varepsilon}$ converges uniformly on compacts of almost all transport ray $\mathcal{R}$ as $\varepsilon \to 0$. Combining this convergence along with the boundedness of the sequence $A^{g}_{\varepsilon}$ and $A^{f}_{\varepsilon}$ and \eqref{eq:fg_combined} yields the convergence of $A^{g}_{\varepsilon}$ and $A^{f}_{\varepsilon}$. Now convergence of $A^{g}_{\varepsilon}$ and $A^{f}_{\varepsilon}$ to $c(1,\ldots, 1)$ and $-c(1,\ldots, 1)$ for some constant $c\in \mathbb{R}$ follows from Lemma~\ref{lem:const-derivative}.

\end{proof}

\subsection{Proof of Lemma~\ref{lem:const-derivative}}
Let $(f_\varepsilon,g_\varepsilon)$ and $(f_K,g_K)$ denote extremizers for the dual problems \eqref{defn:dual-v_eps} and \eqref{defn:dual-v0}. Then from Proposition 4 in \cite{mcann-caff-feldman} there exists $u\in \operatorname{Lip}_1(\R^d)$ such that
\begin{align}
    u(x) = -f_K(x),\quad u(y) = g_K(y), \quad \forall (x,y)\in \mathcal{X}\times \mathcal{Y}.
\end{align}
Using the relation
\begin{align}
f_{\varepsilon}(x) &=-\varepsilon \log \int_{\mathcal{Y}} e^{\frac{g_{\varepsilon}(y)-\|x-y\|}{\varepsilon}} \ \mathrm{d}\nu(y),\quad  \mu \text {-a.s. }
\end{align}
we get
\begin{align}
\frac{f_{\varepsilon}(x)+u(x)}{\varepsilon}+\log \int_{\mathcal{Y}} e^{\frac{-I(x,y)+g_{\varepsilon}(y)-u(y)}{\varepsilon}} \ \mathrm{d}\nu(y) = 0 \label{eqn:f_eps}
\end{align}
where $I(x,y) = \|x-y\|+u(x)-u(y).$ For any given $x$, we denote $y_x $ to be any $y\in \mathcal{R}_x\cap \mathcal{Y}$ such that $I(x,y)\equiv 0$. Furthermore, for points outside a $\delta$-neighbourhood of the transport ray, there exists $c_0=c_0(\delta)>0$ such that $I(x,y)\geq c_0 > 0.$ Therefore, if we assume that $x\in S^i_p$ for some $i\in \mathbb{N},p\in \mathbb{Q}$, it suffices to estimate the integral on the domain $D_\delta = \bigcup_{z\in B_\delta(x)\cap S^i_p} \mathcal{R}_z \cap \mathcal{Y}$,
\begin{align}\label{eqn:localized-integral}
\frac{f_{\varepsilon}(x)+u(x)}{\varepsilon} = -\log \int_{D_\delta} e^{\frac{-I(x,y)+g_{\varepsilon}(y)-u(y)}{\varepsilon}} \ \mathrm{d}\nu(y) + O(\varepsilon^{-1}c_0(\delta)),
\end{align}
where we also used the fact that the potentials $g_\varepsilon$ and $u$ are bounded on compact sets. Consider variations of $x$ along the level set $S^i_p,$ i.e. $x_\delta = \operatorname{Proj}_{S^i_p}(x+\delta \chi^\perp)$ where $\chi^\perp$ is an unit vector orthogonal to the direction of transport ray $\mathcal{R}_x$. We estimate the derivative in the normal direction of the equation \eqref{eqn:localized-integral} as follows
\begin{align}\label{eqn:df_eps}
&\frac{f_{\varepsilon}(x_\delta)+u(x_\delta) - (f_{\varepsilon}(x)+u(x))}{ \varepsilon} \\
&= \frac{\delta}{\varepsilon} \inner{\nabla f_\varepsilon(x)}{\chi^\perp} +  O(\varepsilon^{-1}\delta)
\end{align}
where we used the fact that $u(x_\delta)-u(x)=O(\delta)$ since $\inner{\chi^\perp}{\nabla u(x)}=0.$ From the integral term in \eqref{eqn:localized-integral}, we have
\begin{align}\label{eqn:expansion}
&\int_{D_\delta} \left(e^{\frac{f(x_\delta,y)}{\varepsilon}} -e^{\frac{f(x,y)}{\varepsilon}}\right) \ \mathrm{d}\nu(y) \\
&\int_{D_\delta} e^{\frac{f(x,y)}{\varepsilon}}\left(e^{\frac{f(x_\delta,y)-f(x,y)}{\varepsilon}} -1\right) \ \mathrm{d}\nu(y) \\
&=  \int_{D_\delta} \left(\frac{f(x_\delta,y)-f(x,y)}{\varepsilon} + O(\varepsilon^{-1}\delta)\right)e^{\frac{f(x,y)}{\varepsilon}}\ \mathrm{d}\nu(y),
\end{align}
where $f$ is defined as follows
\begin{align}\label{defn:f}
f(x,y) &:= -I(x,y)+g_{\varepsilon}(y)-u(y). 
\end{align}
Recall that Lemma \ref{lem:lip-change-of-variable} gives local coordinate function $G$ mapping a ray cluster to $\R^{d-1}\times \R$ along with its inverse map $F.$ Thus, we can express $x\in \mathcal{X}$ and $y\in D_\delta$ as
\begin{align}
\tilde{x}=G(x),\quad  \tilde{x}=\mathfrak{x},\quad \tilde{y}=G(y),\quad \tilde{y}=\mathfrak{y}+\mathfrak{y}^\perp
\end{align}
where $\mathfrak{x}$ and $\mathfrak{y}$ are orthogonal projections of the points $\tilde{x},\tilde{y}$ onto the transport $\mathcal{R}_x$. Expanding $I$ and $g_\varepsilon$ as in \eqref{eqn:I-taylor} and \eqref{eqn:f_eps and g_eps expansion} we get
\begin{align}
f(x,y) = -\frac{\|(\overline{\tfrac{\partial F(\mathfrak{y})}{\partial \mathfrak{y}}})\mathfrak{y}^{\perp}\|^2}{2\|F(\mathfrak{x})-F(\mathfrak{y})\|} + g_\varepsilon(F(\mathfrak{{y}}))+\inner{\nabla g_\varepsilon(F(\mathfrak{y}))}{(\overline{\tfrac{\partial F(\mathfrak{y})}{\partial \mathfrak{y}}})\mathfrak{y}^\perp}  -u(F(\mathfrak{y})) + O(\delta).
\end{align}
Similarly, we can write
\begin{align}
\widetilde{x_\delta}=G(x_\delta), \quad \widetilde{x_\delta}= \mathfrak{\bar{x}} ,\quad \tilde{y} =\mathfrak{\mathfrak{\bar{y}}}+\mathfrak{\bar{y}}^\perp
\end{align}
where $\mathfrak{\bar{x}},\mathfrak{\bar{y}}$ are orthogonal projections of the points $\widetilde{x_\delta}$ and $\tilde{y}$ on the transport ray $\mathcal{R}_{x_\delta}$. Thus, we get 
\begin{align}
f(x_\delta,y) &   = -\frac{\| (\overline{\tfrac{\partial F(\mathfrak{\bar{y}})}{\partial \mathfrak{{y}}}}){\mathfrak{\bar{y}}}^{\perp}\|^2}{2\|F(\mathfrak{\bar{x}})-F(\mathfrak{\bar{y}})\|}+ g_\varepsilon(F(\mathfrak{\bar{y}}))+\inner{\nabla g_\varepsilon(F(\mathfrak{\bar{y}}))}{(\overline{\tfrac{\partial F(\mathfrak{\bar{y}})}{\partial \mathfrak{y}}})\mathfrak{\bar{y}}^\perp} -u(F(\mathfrak{\bar{y}}))+ O(\delta).
\end{align}
By Lemma \ref{lem:lip-control} the distance between the transport rays emanating from $x$ and $x_\delta$ is of order $O(\delta)$, i.e. $\|\mathfrak{y}-\mathfrak{\bar{y}}\|+\|\mathfrak{\bar{y}}^\perp-\mathfrak{y}^\perp\|=O(\delta)$. Furthermore, since $\|x_\delta-x\|=O(\delta)$ and $G$ is a Lipschitz map we have $\|\mathfrak{x}-\mathfrak{\bar{x}}\|=O(\delta).$ Thus, if we further decompose $\mathfrak{\bar{y}}^\perp=\mathfrak{z}+\mathfrak{z}^\perp$, where $\mathfrak{z}^\perp$ is orthogonal to $\mathcal{R}_x$, and omit higher order terms, we get
\begin{align}
    f(x_\delta,y)-f(x,y) = \delta \inner{\nabla g_\varepsilon(F(\mathfrak{y}))}{\tilde{\chi}^\perp} + o(\delta)
\end{align}
where $\tilde{\chi}^\perp =\frac{\mathfrak{\bar{z}}'^\perp-\mathfrak{{y}}'^\perp}{\|\mathfrak{\bar{z}}'^\perp-\mathfrak{{y}}'^\perp\|}$, $\mathfrak{\bar{z}}'^\perp=(\overline{\tfrac{\partial F(\mathfrak{\bar{y}})}{\partial \mathfrak{y}}})\mathfrak{\bar{z}}^\perp$ and $\mathfrak{y}'^\perp=(\overline{\tfrac{\partial F(\mathfrak{{y}})}{\partial \mathfrak{y}}})\mathfrak{{y}}^\perp.$ This implies that
\begin{align}\label{eqn:final-integral-expansion}
&= \int_{D_\delta} \left(e^{\frac{f(x+\delta \chi^\perp,y)}{\varepsilon}} -e^{\frac{f(x,y)}{\varepsilon}}\right)\ \mathrm{d}\nu(y) \\
&= \int_{D_\delta} \left(\frac{f(x+\delta \chi^\perp,y)-f(x,y)}{\varepsilon} +O(\varepsilon^{-1}\delta)\right)e^{\frac{f(x,y)}{\varepsilon}}\ \mathrm{d}\nu(y)\\
&= \int_{G(D_\delta)} \left(\frac{\delta \inner{\nabla  g_{\varepsilon}(F(\mathfrak{y}))}{\tilde{\chi}^\perp}}{\varepsilon}+ O(\varepsilon^{-1}\delta)\right)d\zeta^{g_\varepsilon}(\mathfrak{y}),
\end{align}
where $d\zeta^{g_\varepsilon}(\mathfrak{y})\sim e^{\frac{f(x,y)}{\varepsilon}}\ q(F(\mathfrak{y})) J(F(\mathfrak{y}))(1+O(\delta)) \ \mathrm{d}\mathfrak{y}_1^\perp \cdots \mathrm{d}\mathfrak{y}_{d-1}^\perp\mathrm{d}\mathfrak{y}_d$. Combining \eqref{eqn:df_eps} and \eqref{eqn:final-integral-expansion} we get
\begin{align}
 \frac{\delta}{\varepsilon}\inner{\nabla f_\varepsilon(x)}{\chi^\perp}  &=-\log \frac{\int_{D_\delta} e^{\frac{f(x+\delta \chi^\perp,y)}{\varepsilon}} \ \mathrm{d}\nu(y) }{\int_{D_\delta} e^{\frac{f(x,y)}{ \varepsilon}} \ \mathrm{d}\nu(y) }  +  O(\varepsilon^{-1}\delta)\\
 &= -\log\left( 1+  \frac{\int_{D_\delta} \left(e^{\frac{f(x+\delta \chi^\perp,y)}{\varepsilon}} -e^{\frac{f(x,y)}{ \varepsilon}} \right) \ \mathrm{d}\nu(y)}{\int_{D_\delta} e^{\frac{f(x,y)}{ \varepsilon}} \ \mathrm{d}\nu(y) }\right) +  O(\varepsilon^{-1}\delta)\\
 &= -\frac{\delta}{\varepsilon}\frac{\int_{G(D_\delta)} \inner{\nabla  g_{\varepsilon}(F(\mathfrak{y}))}{\tilde{\chi}^\perp}d\zeta^{g_\varepsilon}(\mathfrak{y})}{\int_{D_\delta} e^{\frac{f(x,y)}{ \varepsilon}} \ \mathrm{d}\nu(y)}+O(\varepsilon^{-1}\delta^2) + O(\varepsilon^{-1}\delta)
\end{align}
and therefore the above equation yields
\begin{align}\label{eqn:derivative identity}
 \inner{\nabla f_\varepsilon(x)}{\chi^\perp} = -\mathbb{E}_{\zeta^{g_{\varepsilon}}}\left[\inner{\nabla  g_{\varepsilon}(y_x)}{\Tilde{\chi}^\perp}\right]+O(\delta)
\end{align}
where we abuse notation and denote the normalized measure by $\zeta^{g_\varepsilon}$. Repeating the argument with $g_\varepsilon$ we get
\begin{align}
\inner{\nabla g_\varepsilon(y)}{\chi^\perp} = -\mathbb{E}_{\zeta^{f_{\varepsilon}}}\left[\inner{\nabla f_{\varepsilon}(x_{y})}{\bar{\chi}^\perp}\right]+O(\delta)
\end{align}
where the above expectation is with respect to the same measure where instead of $g_\varepsilon$ in \eqref{defn:f} we use $f_\varepsilon.$ Then 
\begin{align}
  \inner{\nabla f_\varepsilon(x)}{\chi^\perp} = \mathbb{E}_{\zeta^{f_\varepsilon}}\mathbb{E}_{\zeta^{g_{\varepsilon}}}\left[\inner{\nabla  f_{\varepsilon}}{\chi'}\right]+O(\delta) 
\end{align}
where $\chi'$ is some vector orthogonal to the transport ray $\mathcal{R}_x.$ Then we first claim that $\inner{\nabla f_\varepsilon}{\chi^\perp} $ converges to a constant when $\delta\to 0.$ This follows from the fact that the minimum value of the function $F_\varepsilon(x,\chi^\perp)=\inner{\nabla f_\varepsilon(x)}{\chi^\perp}$ on a compact set $(x,\chi^\perp)\in \mathcal{X}\times \S^{d-1}$, denoted by $F^{\operatorname{min}}_\varepsilon$ satisfies
\begin{align}
    \left|F^{\operatorname{min}}_{\varepsilon}-\mathbb{E}_{\zeta^{f_\varepsilon}}\mathbb{E}_{\zeta^{g_{\varepsilon}}}\left[F_\varepsilon\right] \right| \lesssim \delta \to 0
\end{align}
when $\delta=\varepsilon^{1/2-\kappa}$, $\kappa\in (0,1/2)$ and $\varepsilon\to 0.$ The same argument implies that $\inner{\nabla g_\varepsilon}{\chi^\perp} $ converges as $\varepsilon\to 0.$ The fact that the derivatives of $f_\varepsilon$ and $g_\varepsilon$ converge to the same constant up to a sign follows from sending $\delta=\varepsilon^{1/2-\kappa}\to 0$ in \eqref{eqn:derivative identity}.

\subsection{Proof of Lemma~\ref{lem:constant-on-strips}}
Recall the expression
 \begin{align}\label{eq:Initial_Eq}
 \frac{f_{\varepsilon}(x)-u(x)}{\varepsilon}+\log \int_{\mathcal{Y}} e^{\frac{-I(x,y)+g_{\varepsilon}(y)+u(y)}{\varepsilon}} \mathrm{d}\nu(y) = 0
\end{align}
where $I(x,y) = \|x-y\|-u(x)+u(y).$ Let $\mathcal{R}^{\circ}_x \in T^0_{pij}$ be a part of some ray cluster where we have a local coordinate representation due to Lemma~\ref{lem:lip-change-of-variable}. We first make the following split
\begin{align}
\int_{\mathcal{Y}} e^{-I(x,y)/\varepsilon+\Tilde{g}_\varepsilon} \ \mathrm{d}\nu(y)&= \int_{D_\delta } e^{-I(x,y)/\varepsilon+\Tilde{g}_\varepsilon} \ \mathrm{d}\nu(y) + \int_{D_\delta^c } e^{-I(x,y)/\varepsilon+\Tilde{g}_\varepsilon} \ \mathrm{d}\nu(y)
\end{align}
where $D_\delta = \bigcup_{z\in B_\delta(x)\cap S^i_p} \mathcal{R}_z \cap \mathcal{Y}$ for some $\delta>0$ small enough where $S^i_p$ is same as in Section~\ref{sec:l1-ot}. Like as in the proof of Lemma \ref{lemma:Main}, we introduce
\begin{align}
\Tilde{x}:=G(x)=\mathfrak{x}+\sqrt{\varepsilon}\mathfrak{x}^\perp, \quad \Tilde{y}:=G(y)=\mathfrak{y}+\sqrt{\varepsilon}\mathfrak{y}^\perp, 
\end{align}
for some $\mathfrak{x}^\perp, \mathfrak{y}^\perp \in \mathbb{R}^{d-1} $ which are orthogonal to the direction $\chi$ of $\mathcal{R}_x$. As in the proof of of Lemma~\ref{lemma:Main}, we now have 
\begin{align}
\mathrm{d}\nu(y) & = \varepsilon^{(d-1)/2} q( V(\mathfrak{y}_1.\ldots , \mathfrak{y}_{d-1}) + \mathfrak{y}_d \nu(V(\mathfrak{y}_1.\ldots , \mathfrak{y}_{d-1})))\\ &\times  J(F( V(\mathfrak{y}_1.\ldots , \mathfrak{y}_{d-1}) + \mathfrak{y}_d \nu(V(\mathfrak{y}_1.\ldots , \mathfrak{y}_{d-1})))) (1+O(\delta))d\mathfrak{y}^{\perp}_1 \ldots d\mathfrak{y}^{\perp}_{d-1} d\mathfrak{y}_d\\
I(x,y) &= \|F(\tilde{x}) - F(\tilde{y})\| - \|F(\mathfrak{x}) - F(\mathfrak{y})\| = 
\varepsilon \frac{\|(\overline{\tfrac{\partial F(\mathfrak{x})}{\partial \mathfrak{x}}})\mathfrak{x}^{\perp} - (\overline{\tfrac{\partial F(\mathfrak{y})}{\partial \mathfrak{y}}})\mathfrak{y}^{\perp}\|^2}{2\|F(\mathfrak{x})-F(\mathfrak{y})\|} + O(\varepsilon\delta)\\
\tilde{g}_{\varepsilon}(y) &= \tilde{g}_{\varepsilon}(F(\mathfrak{y})) + \frac{1}{\sqrt{\varepsilon}}\langle \nabla \tilde{g}_{\varepsilon}(F(\mathfrak{y})), (\overline{\tfrac{\partial F(\mathfrak{y})}{\partial \mathfrak{y}}})\mathfrak{y}^{\perp}\rangle + O(\sqrt{\varepsilon})
\end{align}
where $\overline{\tfrac{\partial F(\cdot)}{\partial \cdot}}$ denotes $(d-1)\times (d-1)$ principle submatrix of $\tfrac{\partial F(\cdot)}{\partial \cdot}$. We use the following notation $\bar{\mathfrak{x}}^\perp := (\overline{\tfrac{\partial F(\mathfrak{x})}{\partial \mathfrak{x}}})\mathfrak{x}^{\perp} $ and $\bar{\mathfrak{y}}^\perp := (\overline{\tfrac{\partial F(\mathfrak{y})}{\partial \mathfrak{y}}})\mathfrak{x}^{\perp} $. This yields the following expression after plugging into \eqref{eq:Initial_Eq},
\begin{align}
&\Tilde{f}_\varepsilon(x) =\frac{f_{\varepsilon}(x)-u(x)}{\varepsilon} \\
&= -\log \int_{D_\delta} e^{-I(x,y)/\varepsilon+\Tilde{g}_\varepsilon(y)} \ \mathrm{d}\nu(y) + O(e^{-O(1/\varepsilon)})\\
&= -\log \varepsilon^{(d-1)/2} e^{-O(\sqrt{\varepsilon})+O(\delta)}\int_{\mathcal{R}_x}  \underbrace{\int^{\delta\varepsilon^{-1/2}}_{-\delta\varepsilon^{-1/2}}\cdots \int^{\delta\varepsilon^{-1/2}}_{-\delta\varepsilon^{-1/2}}}_{(d-1)\text{-times}}  e^{- \frac{\|\bar{\mathfrak{x}}^{\perp} - \bar{\mathfrak{y}}^{\perp}\|^2}{2\|x-y\|} - \frac{1}{\sqrt{\varepsilon}} \langle  \nabla \tilde{g}_{\varepsilon}(F(\mathfrak{y})), (\bar{\mathfrak{x}}^\perp - \bar{\mathfrak{y}}^\perp)\rangle}\\
&\quad \times q(y) e^{\tilde{g}_{\varepsilon}(F(\mathfrak{y}))+ \frac{1}{\sqrt{\varepsilon}}\inner{\nabla \tilde{g}_{\varepsilon}(F(\mathfrak{y}))}{\bar{\mathfrak{x}}^\perp}} \ \mathrm{d}\bar{\mathfrak{y}}^\perp_1 \ldots \mathrm{d}\bar{\mathfrak{y}}^\perp_{d-1} d\mathfrak{y}_d + o_{\varepsilon}(1).
\end{align}
As $\varepsilon \to 0$, the $(d-1)$ inner integrals converge to Gaussian integrals that can be simply integrated to yield the following asymptotic relation
\begin{align}\label{eqn:tilde-f-expansion}
\Tilde{f}_\varepsilon(x) = -\log \varepsilon^{(d-1)/2} \int_{\mathcal{R}_x^\circ\cap \mathcal{Y}} & (2\pi \|\bar{\mathfrak{x}}^\perp_d-\bar{\mathfrak{y}}^\perp_d\|)^{\frac{d-1}{2}} e^{\frac{\|\nabla \tilde{g}_{\varepsilon}(F(\mathfrak{y}))\|^2}{2\varepsilon}\|\bar{\mathfrak{x}}^\perp_d-\bar{\mathfrak{y}}^\perp_d\|} \\ &\times e^{\tilde{g}_{\varepsilon}(y)+\frac{1}{\sqrt{\varepsilon}}\inner{\nabla \tilde{g}_{\varepsilon}(F(\mathfrak{y}))}{\bar{\mathfrak{x}}^\perp}}  q(y) \ \mathrm{d}\mathfrak{y}_d + o_\varepsilon(1)
\end{align}
and thus, we define $\Theta^1_\varepsilon$ as a function of the distance along the 
\begin{align}\label{eq:Theta_1}
    \Theta^1_\varepsilon := (2\pi \|\bar{\mathfrak{x}}^\perp_d-\bar{\mathfrak{y}}^\perp_d\|)^{\frac{d-1}{2}}  e^{\frac{\|\nabla \tilde{g}_{\varepsilon}(F(\mathfrak{y}))\|^2}{2\varepsilon}\|\bar{\mathfrak{x}}^\perp_d-\bar{\mathfrak{y}}^\perp_d\|}e^{\frac{1}{\sqrt{\varepsilon}}\inner{\nabla \tilde{g}_{\varepsilon}(F(\mathfrak{y}))}{\bar{\mathfrak{x}}^\perp}}.
\end{align}
Repeating the same argument yields a similar relation for \eqref{eqn:tilde-g}.

\subsection{Proof of Lemma~\ref{lem:entropic-cost-expansion}}
We begin by using the dual formulation \eqref{defn:dual-v_eps} with the potentials $(f_K, g_K)$ attaining the supremum in \eqref{defn:dual-v0} to get the following lower bound
\begin{align}
v_{\varepsilon} & \geq \int_{\mathcal{X}} f_K \ \mathrm{d} \mu+\int_{\mathcal{Y}} g_K \ \mathrm{d} \nu-\varepsilon \log \left(\int_{\mathcal{X} \times \mathcal{Y}} e^{-\frac{I}{\varepsilon}} \mathrm{~d} \mu \otimes \mathrm{d}\nu\right) \\
& =v_0-\varepsilon \log \left(\int_{\mathcal{X} \times \mathcal{Y}} e^{-\frac{I}{\varepsilon}} \ \mathrm{d} \mu \otimes \mathrm{d}\nu\right),    
\end{align}
where $I(x,y)=\|x-y\|-f_K(x)-g_K(y)$ for $(x,y)\in \mathcal{X} \times \mathcal{Y}$. We claim that for all $\varepsilon>0$, there exists a constant $C>0$ (independent of $\varepsilon$) such that the following holds
\begin{align}
\int_{\mathcal{X} \times \mathcal{Y}} e^{-I / \varepsilon} \ \mathrm{d} \mu \otimes \mathrm{d}\nu \leq C \varepsilon^{(d-1)/2}.    
\end{align}
To prove this we consider the covering described in Lemma~\ref{lemma:Main}. By Lemma \ref{lem:support-lemma} it suffices to prove this estimate on $\bigcup_{p\in \mathbb{Q},i,j\in \mathbb{N}} (T^{0}_{pij}\cap \mathcal{X}) \times (T^{0}_{pij}\cap \mathcal{Y}).$ Define for any $p\in \mathbb{Q}$, 
\begin{align}
\Tilde{T}_p := \bigcup_{i,j\in \mathbb{N}} (T^{0}_{pij}\cap \mathcal{X}) \times (T^{0}_{pij}\cap \mathcal{Y}).
\end{align}
Then to avoid double counting we refine this family of sets $\{\Tilde{T}_p\}_{p\in \mathbb{Q}}$ by defining a disjoint family of sets
\begin{align}
\bar{T}_{p_1} &:= \Tilde{T}_{p_1}\\
\bar{T}_{p_k} &:= \Tilde{T}_{p_k}\setminus \bigcup_{q=1}^{k-1}\bar{T}_{p_q},\quad k\geq 2
\end{align}
where $\{{p_n}\}_{n=1}^{\infty}$ is an enumeration of the rational numbers. This gives us a disjoint family of sets and therefore it suffices to estimate
\begin{align}
    \int_{\mathcal{X}\times \mathcal{Y}} e^{-I/\varepsilon} \ \mathrm{d}\mu \otimes \mathrm{d}\nu \leq \limsup_{m\to \infty}\int_{\cup_{n=1}^m \bar{T}_{p_n}} e^{-I/\varepsilon} \ \mathrm{d}\mu \otimes \mathrm{d}\nu.
\end{align}
Now observe that using the covering described in Lemma~\ref{lemma:Main} of the $T^0_{pij}$ by
\begin{align}
  A^k_\delta := \bigcup_{z\in B_\delta(x_k)}  \mathcal{R}_z \cap \mathcal{X},\quad D^k_\delta := \bigcup_{z\in B_\delta(x_k)}  \mathcal{R}_z \cap \mathcal{Y}
\end{align}
where $\{B_\delta(x_k)\}_{k\in \mathbb{N}}$ covers $S^i_p$, we have
\begin{align}
\int_{\cup_{n=1}^m \bar{T}_{p_n}} e^{-\frac{I}{\varepsilon}} \ \mathrm{d}\mu \otimes \mathrm{d}\nu &\leq \int_{\Tilde{T}_{p_m}} e^{-\frac{I}{\varepsilon}} \ \mathrm{d}\mu \otimes \mathrm{d}\nu\\
&\leq \sum_{k} \int_{A^k_{\delta}\times D^k_{\delta^{1-\kappa}}} e^{-\frac{I}{\varepsilon}} \ \mathrm{d}\mu \otimes \mathrm{d}\nu \\
&\quad + \sum_{k}\int_{A^k_{\delta}\times \left(D^k_{\delta^{1-\kappa}}\right)^c} e^{-\frac{I}{\varepsilon}} \ \mathrm{d}\mu \otimes \mathrm{d}\nu.
\end{align}
Then arguing as in Lemma~\ref{lemma:Main}, we have 
\begin{align}
\mathrm{d}\nu(y) & = \varepsilon^{(d-1)/2} q( V(\mathfrak{y}_1.\ldots , \mathfrak{y}_{d-1}) + \mathfrak{y}_d \nu(V(\mathfrak{y}_1.\ldots , \mathfrak{y}_{d-1})))\\ &\times  J(F( V(\mathfrak{y}_1.\ldots , \mathfrak{y}_{d-1}) + \mathfrak{y}_d \nu(V(\mathfrak{y}_1.\ldots , \mathfrak{y}_{d-1})))) (1+O(\delta))d\mathfrak{y}^{\perp}_1 \ldots d\mathfrak{y}^{\perp}_{d-1} d\mathfrak{y}_d\\
I(x,y) &= \|F(\tilde{x}) - F(\tilde{y})\| - \|F(\mathfrak{x}) - F(\mathfrak{y})\| = 
\varepsilon \frac{\|(\overline{\tfrac{\partial F(\mathfrak{x})}{\partial \mathfrak{x}}})\mathfrak{x}^{\perp} - (\overline{\tfrac{\partial F(\mathfrak{y})}{\partial \mathfrak{y}}})\mathfrak{y}^{\perp}\|^2}{2\|F(\mathfrak{x})-F(\mathfrak{y})\|} + O(\varepsilon\delta)
\end{align}
where $\overline{\tfrac{\partial F(\cdot)}{\partial \cdot}}$ denotes $(d-1)\times (d-1)$ principle submatrix of $\tfrac{\partial F(\cdot)}{\partial \cdot}$. We use the following notation $\bar{\mathfrak{x}}^\perp := (\overline{\tfrac{\partial F(\mathfrak{x})}{\partial \mathfrak{x}}})\mathfrak{x}^{\perp} $ and $\bar{\mathfrak{y}}^\perp := (\overline{\tfrac{\partial F(\mathfrak{y})}{\partial \mathfrak{y}}})\mathfrak{x}^{\perp}$. Thus
\begin{align}\label{eqn:first-term}
&\sum_{k}\int_{A^k_{\delta}\times D^k_{\delta^{1-\kappa}}} e^{-\frac{I}{\varepsilon}} \ \mathrm{d}\mu \otimes \mathrm{d}\nu \\
&\lesssim  \varepsilon^{(d-1)/2} \int_{A^k_{\delta}\times (D^k_{\delta^{1-\kappa}}\cap \mathcal{R})} \mathrm{d}\mu(x) \int_{\R^{d-1}}    e^{- \frac{\|\bar{\mathfrak{x}}^{\perp} - \bar{\mathfrak{y}}^{\perp}\|^2}{2\|x-y\|}}  q(y)   d\bar{\mathfrak{y}}^\perp_1 \ldots d\bar{\mathfrak{y}}^\perp_{d-1} d{\mathfrak{y}}_d\\
&\lesssim \varepsilon^{(d-1)/2} \sum_{k}\operatorname{Vol}(A_{\delta}^k)\lesssim \varepsilon^{(d-1)/2} \operatorname{Vol}(\mathcal{X}).
\end{align}
On the other hand, using Lemma~\ref{claim:lower bound} with the assumption that $\operatorname{dist}(d,\mathcal{R}_a)\geq \delta^{1+\kappa}$ for any $(a,d)\in A^k_\delta \times (D^k_{\delta^{1-\kappa}})^c$ with $\delta=\varepsilon^{1/2-\kappa}$ for $\kappa\in (0,1/2)$ small enough to get
\begin{align}\label{eqn:second-term}
  \sum_{k} \int_{A^k_{\delta}\times \left(D^k_{\delta^{1-\kappa}}\right)^c} e^{-\frac{I}{\varepsilon}} \ \mathrm{d}\mu \otimes \mathrm{d}\nu &\lesssim   \sum_{k}\int_{A^k_{\delta}\times \left(D^k_{\delta^{1-\kappa}}\right)^c} e^{-\frac{c_0 \delta^{2+2\kappa}}{\varepsilon}} \ \mathrm{d}\mu \otimes \mathrm{d}\nu \quad \\
  &\lesssim e^{-\frac{c_0 \delta^{2+2\kappa}}{\varepsilon}} \sum_{k}\operatorname{Vol}\left(A^k_\delta \times (D^k_{\delta^{1-\kappa}})^c\right)\\
  &\lesssim e^{-\varepsilon^{-\frac{c_0(1+2\kappa)^2}{2}}}\operatorname{Vol}\left(\mathcal{X}\times \mathcal{Y}\right)
\end{align}
which shows that this term decays exponentially faster than $\varepsilon^{(d-1)/2}$ when $\varepsilon \to 0$. Thus combining \eqref{eqn:first-term} and \eqref{eqn:second-term} we get
\begin{align}
\int_{\mathcal{X}\times \mathcal{Y}} e^{-I/\varepsilon} \ \mathrm{d}\mu \otimes \mathrm{d}\nu &\leq \limsup_{m\to \infty}\int_{\cup_{n=1}^m \bar{T}_{p_n}} e^{-I/\varepsilon} \ \mathrm{d}\mu \otimes \mathrm{d}\nu\\
&\lesssim \limsup_{m\to \infty}\int_{ \Tilde{T}_{p_m}} e^{-I/\varepsilon} \ \mathrm{d}\mu \otimes \mathrm{d}\nu\\
&\lesssim \varepsilon^{(d-1)/2}
\end{align}
as desired.

\begin{remark}
The above computation is simpler when the source and target measures are unit rectangles in $\mathbb{R}^2$, i.e. $\mu=\mathbf{1}_{[0,1]^2}\mathrm{d}x$ and $\nu=\mathbf{1}_{[2,3]\times [0,1]}\mathrm{d}y$. Then
\begin{align}
\int_{B_{\delta}(x)\times B_{\delta}(y)} e^{-E / \varepsilon} \mathrm{d} \mu \otimes \nu &\leq \int_{B_{\delta}(x)\times B_{\delta}(y)}  e^{-\frac{|x_2-y_2)^2}{2\varepsilon |x_1-y_1|}+ \frac{O(\delta^2)}{\varepsilon}} \ \mathrm{d}x_1 \mathrm{d}y_1 \mathrm{d}x_2 \mathrm{d}y_2 \\
&\leq \int \sqrt{2\pi \varepsilon |x_1-y_1|} e^{-O(\delta^2)/\varepsilon} \mathrm{d}x_2 \mathrm{d}x_1 \mathrm{d}y_1\\
&\leq C \varepsilon^{1/2}  \delta^4 \leq C \sqrt{\varepsilon} \operatorname{Vol}(B_{\delta}(x)\times B_{\delta}(y)).
\end{align}
Summing this over all such balls we get
\begin{align}
\int_{\cup_{i=1}^n B^i_{\delta}(x)\times B^i_{\delta}(y)} e^{-E / \varepsilon} \mathrm{d} \mu \otimes \nu \leq C \sqrt{\varepsilon}
\end{align}
since the volume of $\delta$-balls are controlled by the total volume of the support of the measures $\mu$ and $\nu.$ We estimate the remainder $\mathcal{X}\times \mathcal{Y} \setminus \cup_{i=1}^n B^i_{\delta}(x)\times B^i_{\delta}(y)$ by a crude estimate of the form $e^{-\frac{I(x,y)}{\varepsilon}}\leq e^{-\frac{C}{\varepsilon}}$ which then gives the estimate
\begin{align}
    \int_{\mathcal{X}\times \mathcal{Y}} e^{-E/\varepsilon} \mathrm{d}\mu \otimes \mathrm{d}\nu \leq C \sqrt{\varepsilon}
\end{align}
as desired.
\end{remark}

\section{Auxiliary Results}
\begin{lemma}\label{lem:rate-bound}
Let $A^k_\delta$ and $(D^k_{\delta^{1-\kappa}})^c$ be subsets as defined in Lemma~\ref{lemma:Main}. Then there exists a constant $C>0$ and $\varepsilon_0>0$ such that
\begin{align}
\pi^{\varepsilon}(A^k_\delta \times (D^k_{\delta^{1-\kappa}})^c) \lesssim \inf_{(a,d)\in A^k_\delta \times (D^k_{\delta^{1-\kappa}})^c} e^{-\frac{I(a,d)}{\varepsilon}}
\end{align}
for $\varepsilon\leq \varepsilon_0$. 
\end{lemma}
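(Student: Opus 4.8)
Throughout we restrict to a single ray cluster $T^0_{pij}$, so that Lemma~\ref{lem:lip-change-of-variable} furnishes the coordinates $(F,G)$, Lemma~\ref{lem:lip-control} controls the transverse spreading of nearby rays, and Lemma~\ref{lem:Deri_Conv}(a) gives, uniformly on compact sub-cylinders of the cluster, $\tilde{g}_\varepsilon=O(1)$ and $\tilde{f}_\varepsilon=-\tfrac{d-1}{2}\log\varepsilon+O(1)$, where $\tilde{f}_\varepsilon:=(f_\varepsilon+u)/\varepsilon$, $\tilde{g}_\varepsilon:=(g_\varepsilon-u)/\varepsilon$. The density of $\pi^\varepsilon$ with respect to $\mu\otimes\nu$ is $e^{-I(x,y)/\varepsilon}\,e^{\tilde{f}_\varepsilon(x)+\tilde{g}_\varepsilon(y)}$, and since $\pi^\varepsilon$ has first marginal $\mu$, for $\mu$-a.e.\ $x$ the conditional $\pi^\varepsilon_x$ has $\nu$-density $e^{-I(x,y)/\varepsilon}e^{\tilde{f}_\varepsilon(x)+\tilde{g}_\varepsilon(y)}$ and $\int_{\mathcal{Y}}e^{-I(x,y)/\varepsilon}e^{\tilde{g}_\varepsilon(y)}\,\mathrm{d}\nu(y)=e^{-\tilde{f}_\varepsilon(x)}$. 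Writing $R:=A^k_\delta\times(D^k_{\delta^{1-\kappa}})^c$,
\[
\pi^\varepsilon(R)=\int_{A^k_\delta}\pi^\varepsilon_x\big((D^k_{\delta^{1-\kappa}})^c\big)\,\mathrm{d}\mu(x),\qquad
\pi^\varepsilon_x(E)=\frac{\int_{E}e^{-I(x,y)/\varepsilon}e^{\tilde{g}_\varepsilon(y)}\,\mathrm{d}\nu(y)}{\int_{\mathcal{Y}}e^{-I(x,y)/\varepsilon}e^{\tilde{g}_\varepsilon(y)}\,\mathrm{d}\nu(y)}.
\]
A pointwise density bound fails: it would contribute $\mu(A^k_\delta)\,\|e^{\tilde{f}_\varepsilon}\|_\infty\asymp\delta^{d-1}\varepsilon^{-(d-1)/2}=\varepsilon^{-\kappa(d-1)}\to\infty$. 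The idea is to recover the missing $\varepsilon^{(d-1)/2}$ from the $y$-integrals, which is why we keep the normalized (conditional) formulation.

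\textbf{The two Gaussian estimates.} For the denominator, restricting to the $\sqrt\varepsilon$-tube about $\mathcal{R}_x$ and expanding $I$ as in \eqref{eqn:I-taylor} (a positive-definite quadratic form in the $d-1$ transverse coordinates with $O(1)$ coefficients, $\tilde{g}_\varepsilon=O(1)$, densities bounded) yields $\int_{\mathcal{Y}}e^{-I(x,y)/\varepsilon}e^{\tilde{g}_\varepsilon(y)}\,\mathrm{d}\nu(y)\gtrsim\varepsilon^{(d-1)/2}$, uniformly for $x$ in the cylinder — the same Laplace-type computation already carried out in the proofs of Lemma~\ref{lemma:Main} and Lemma~\ref{lem:entropic-cost-expansion}. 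For the numerator, if $x\in A^k_\delta$ and $y\in(D^k_{\delta^{1-\kappa}})^c$ then the feet on $S^i_p$ of $\mathcal{R}_x$ and $\mathcal{R}_y$ differ by at least $\delta^{1-\kappa}-\delta$, so by Lemma~\ref{lem:lip-control} one has $\operatorname{dist}(y,\mathcal{R}_x)\ge\rho$ for some $\rho\gtrsim\delta^{1+\kappa}$; Lemma~\ref{claim:lower bound} then gives $I(x,y)\gtrsim\operatorname{dist}(y,\mathcal{R}_x)^2$ with constant matching the quadratic form above. By the co-area formula for $y\mapsto\operatorname{dist}(y,\mathcal{R}_x)$, bounded longitudinal integration, and $\rho\gg\sqrt\varepsilon$,
\[
\int_{(D^k_{\delta^{1-\kappa}})^c}e^{-I(x,y)/\varepsilon}e^{\tilde{g}_\varepsilon(y)}\,\mathrm{d}\nu(y)\;\lesssim\;\int_{\{v\in\mathbb{R}^{d-1}:\,|v|\ge\rho\}}e^{-c|v|^2/\varepsilon}\,\mathrm{d}v\;\lesssim\;\varepsilon\,\rho^{\,d-3}\,e^{-c\rho^2/\varepsilon}.
\]
Dividing by the denominator bound, $\pi^\varepsilon_x\big((D^k_{\delta^{1-\kappa}})^c\big)\lesssim(\rho/\sqrt\varepsilon)^{d-3}e^{-c\rho^2/\varepsilon}$, i.e.\ the tail probability that a non-degenerate $(d-1)$-dimensional Gaussian of variance of order $\varepsilon$ exceeds $\rho$.

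\textbf{Conclusion and main obstacle.} Integrating the last display over $A^k_\delta$ and using $\mu(A^k_\delta)\asymp\delta^{d-1}$,
\[
\pi^\varepsilon(R)\;\lesssim\;\delta^{d-1}\,(\rho/\sqrt\varepsilon)^{\,d-3}\,e^{-c\rho^2/\varepsilon}.
\]
With $\delta=\varepsilon^{1/2-\kappa}$ and $\rho\asymp\delta^{1+\kappa}$, the prefactor $\delta^{d-1}(\rho/\sqrt\varepsilon)^{d-3}$ is a power of $\varepsilon$ whose exponent tends to $(d-1)/2>0$ as $\kappa\to0$, hence is bounded once $\kappa$ is small; and since $\rho^3/\varepsilon\to0$, the relative error in the quadratic expansion of $I$ is $O(\rho^3/\varepsilon)=o(1)$ on the $O(\varepsilon/\rho)$-shell carrying the tail mass, so $\inf_R I=c\rho^2(1+o(1))$ and thus $e^{-c\rho^2/\varepsilon}\asymp e^{-\inf_R I/\varepsilon}=\inf_{(a,d)\in R}e^{-I(a,d)/\varepsilon}$. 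Combining, $\pi^\varepsilon(R)\lesssim\inf_{(a,d)\in R}e^{-I(a,d)/\varepsilon}$ for $\varepsilon\le\varepsilon_0$, as claimed. The delicate point — and the reason crude pointwise bounds do not suffice — is the exact cancellation between the $\varepsilon^{-(d-1)/2}$ blow-up of the entropic potential $e^{\tilde{f}_\varepsilon(x)}$ and the $\varepsilon^{(d-1)/2}$-order transverse Gaussian mass of $e^{-I(x,\cdot)/\varepsilon}$; moreover, because the surviving exponential rate $e^{-c\rho^2/\varepsilon}$ must be matched against $e^{-\inf_R I/\varepsilon}$, one genuinely needs the leading-order (sharp) quadratic lower bound for $I$ transverse to the ray together with $\rho$ comparable to the true transverse separation, which is precisely what \eqref{eqn:I-taylor} and Lemma~\ref{claim:lower bound} deliver.
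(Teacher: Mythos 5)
Your route is genuinely different from the paper's, but it has real gaps. The paper treats this lemma as a lightweight localization of the large-deviations upper bound from \cite{entropic-cyc-ld}: $I$ here is the cyclically-monotone rate function of Lemma~4.2 of \cite{entropic-cyc-ld}, one invokes Lemma~4.1 of that reference to obtain $\pi^\varepsilon(B_r(a,d))\le C e^{-(I_\eta(a,d)-\eta)/\varepsilon}$ on a small ball around each $(a,d)$, then covers the compact set $A^k_\delta\times(D^k_{\delta^{1-\kappa}})^c$ by finitely many such balls and sends the truncation level $\eta\to 0$. That argument is short, uses no information about the entropic potentials, and is deliberately self-contained so that it can feed into Lemma~\ref{lemma:Main} without creating dependency loops.

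Your proposal instead re-derives the bound by hand via conditional densities and Laplace estimates, which requires inputs that the cited results do not actually give you. First, you need $\tilde g_\varepsilon=O(1)$ \emph{uniformly} over all of $(D^k_{\delta^{1-\kappa}})^c$ for the numerator (and over the $\sqrt\varepsilon$-tube for the denominator); but Lemma~\ref{lem:Deri_Conv}(a) only yields uniform convergence of $\tilde g_\varepsilon$ on compact subsets of a \emph{single} ray $\mathcal R^\circ\cap\mathcal Y$, for \emph{almost every} ray, with no uniformity across rays or in the transverse direction beyond the $O(\sqrt\varepsilon)$ scale controlled by part (b). Second, your numerator estimate silently upgrades Lemma~\ref{claim:lower bound} to a pointwise quadratic lower bound $I(x,y)\gtrsim \operatorname{dist}(y,\mathcal R_x)^2$ ``with constant matching the quadratic form,'' whereas the lemma as stated only provides the one-sided threshold $I(a,d)\ge c\,\delta^{2+2\kappa}$ under the hypothesis $\operatorname{dist}(d,\mathcal R_a)\ge\delta^{1+\kappa}$; the Taylor identity \eqref{eqn:I-taylor} that would give a genuine quadratic behavior is only valid in the $O(\sqrt\varepsilon)$-tube, not at transverse distance $\rho\asymp\delta^{1+\kappa}\gg\sqrt\varepsilon$. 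Third, your $I$ is the duality gap $\|x-y\|+u(x)-u(y)$, while the lemma's $I$ is the cyclic-monotonicity rate function of \cite{entropic-cyc-ld}; one would still need to reconcile the two to close the statement. Finally, even if one is willing to grant Lemma~\ref{lem:Deri_Conv} as an input, you are importing the entire potential-expansion machinery (a later section of the paper) into what is intended to be an elementary auxiliary estimate, which at minimum requires a careful check that no circular dependency is introduced; the paper avoids this entirely by going through \cite{entropic-cyc-ld}.
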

\begin{proof}
For convenience, we drop the superscript $k$ in the definition of sets $A^k_\delta$ and $(D^k_{\delta^{1-\kappa}})^c$. Fix $\eta>0$ and $(a, d) \in A_{\delta}\times D_{\delta^{1-\kappa}}^c$. Recall the definition of the rate function $I$ as in Lemma 4.2~\cite{entropic-cyc-ld}; given a c-cyclically monotone set $\emptyset \neq \Gamma \subseteq \mathcal{X} \times \mathcal{Y}$, define
$$
I(x, y):=\sup _{\ell \geq 2} \sup _{\left(x_i, y_i\right)_{i=2}^\ell \subset \Gamma} \sup _{\sigma \in \Sigma(\ell)} \sum_{i=1}^\ell \|x_i-y_i\|-\sum_{i=1}^\ell \|x_i - y_{\sigma(i)}\|
$$
where $\left(x_1, y_1\right):=(x, y)$ and $\Sigma(\ell)$ is the set of permutations of $\{1,\cdots,\ell\}$. Thus there exist $\ell \geq 2$ and $\left(a_i, d_i\right)_{i=2}^\ell \subset \mathcal{X}\times \mathcal{Y}$ such that
\begin{align}
\sum_{i=1}^\ell \| a_i -d_i \|-\sum_{i=1}^\ell \| a_i- d_{i+1}\| >I_\eta(a, d)-\eta / 2    
\end{align}
where $\left(a_1, d_1\right):=(a, d)$ and $I_\eta(a, d):=I(a, d) \wedge \eta^{-1}$, and we replace the rate function $I$ by $I_\eta$ in the case when $I=+\infty.$ Then applying Lemma 4.1 in \cite{entropic-cyc-ld} yields a ball $B_r(a, d)$ with
\begin{align}
    \pi^{\varepsilon}(B_r(a,d)) \leq C e^{-\frac{I_{\eta}(a,d)-\eta}{\varepsilon}}
\end{align}
for $\varepsilon \leq \varepsilon_0$ with $\varepsilon_0$ and $r>0$ small enough. Covering the compact set $A_{\delta}\times D_{\delta^{1-\kappa}}^c$ by a finite number of balls $B_r(a,d)$ with some small enough fixed radius $r>0$ to get
\begin{align}
    \pi^{\varepsilon}(A_{\delta}\times D_{\delta^{1-\kappa}}^c) \lesssim  \inf_{(a,d)\in A_{\delta}\times D_{\delta^{1-\kappa}}^c} e^{-\frac{I_{\eta}(a,d)-\eta}{\varepsilon}}.
\end{align}
Since the above estimate holds for arbitrary $\eta>0$, we get the desired inequality.
\end{proof}

\begin{figure}
\begin{tikzpicture}[scale=2.5, line join=round, line cap=round]

\definecolor{mywhite}{rgb}{0,0,0}
\definecolor{myblue}{rgb}{0,0.6,1}
\definecolor{mygreen}{rgb}{0,0.8,0.4}
\definecolor{myorange}{rgb}{1,0.5,0}
\definecolor{myred}{rgb}{1,0.2,0.2}
\definecolor{myviolet}{rgb}{0.6,0.2,1}

\fill[white] (4,0,0) -- (4,1.8,0) -- (4,1.8,1.8) -- (4,0,1.8) -- cycle;

\draw[myorange, thick, dashed] (0.5,0,0) -- (0.5,1,0) -- (0.5,1,1) -- (0.5,0,1) -- cycle;

\draw (0.5,0.5,0.5) node {.} coordinate[label=below: $a$] (a);

\draw[myblue, dashed] (0.5,0.5,0.5) -- (3.5,0.5,0.5) ;
\node at (2.5,0.45,0.5) {$\mathcal{R}_a$};

\draw[myred, thick] (0.5,0.5,0.5) ellipse [x radius=0.1, y radius=0.3];
\node at (-0.25,-0.1,0.5) {$B_{\delta^{1+\tau \kappa}}(a)$};
\draw[->] (0,0,0.5) -- (0.4,0.34,0.5);

\draw (0.5,0.8,0.5) node {.} coordinate[label=above: $a'$] (ap);;

\draw[myblue, dashed] (0.5,0.8,0.5) -- (3.5,0.8,0.5);
\node at (2.5,0.87,0.5) {$\mathcal{R}_{a'}$};

\draw[mygreen, dashed] (0.5,0.5,0.5) -- (3.5,1.3,0.3);

\draw[myorange, thick, dashed] (3.5,0,0) -- (3.5,1.5,0) -- (3.5,1.5,1.5) -- (3.5,0,1.5) -- cycle;

\draw (3.5,0.5,0.5) node {.} coordinate[label=right: $d'$] (dp);

\draw (3.5,0.8,0.5) node {.} coordinate[label=right: $d''$] (dpp);

\draw (3.5,1.3,0.3) node {.} coordinate[label=right: $d$] (d);

\draw[black] (0.8,0.5,0.5) arc (10:50:0.13);
\node at (0.85,0.55,0.5) {$\alpha$};
 
\draw[black, thick] (current bounding box.south west) rectangle (current bounding box.north east);
\end{tikzpicture}
\caption{Pictorial depiction of the points $a,a',d, d',d'',d$ constructed in the proof of Lemma~\ref{claim:lower bound} when $\mathcal{X}$ and $\mathcal{Y}$ are two unit cubes where one is a linear translation of other and $\mu,\nu$ are uniform measures on $\mathcal{X},\mathcal{Y}$ respectively.}
\end{figure}
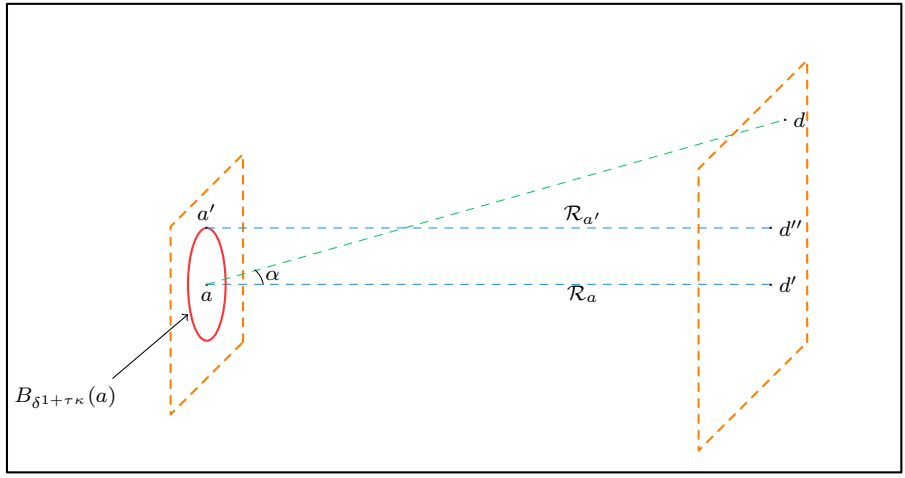

\begin{lemma}\label{claim:lower bound}
Let $(a,d)\in A_\delta\times D_\delta\cap \{(x,y) \in \mathcal{X}\times \mathcal{Y}:I(x,y)\neq 0\}$ be such that $\mathrm{dist}(d,\mathcal{R}_a)\geq \delta^{1+\kappa}$ for some small $\kappa>0$ where $\mathcal{R}_a$ is the transport ray from $a$. Then we have  
\begin{align}\label{eq:Idelta}
    I(a,d)\geq c\delta^{2+2\kappa}.  
\end{align}
\end{lemma}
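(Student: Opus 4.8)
The plan is to leverage the $1$‑Lipschitz regularity of the Kantorovich potential $u$ together with its affinity along transport rays, reducing the bound on $I(a,d)=\|a-d\|-u(a)+u(d)$ to an elementary planar estimate. Along $\mathcal{R}_a$ the potential $u$ is affine with unit slope, increasing towards the upper end $a^{+}\in\mathcal{X}$; let $\chi$ be the unit direction of $\mathcal{R}_a$ pointing towards $a^{+}$, and pick any point $q$ on the sub‑segment of $\mathcal{R}_a$ between $a$ and $a^{+}$, so that $u(q)-u(a)=\|q-a\|$. Since $u$ is $1$‑Lipschitz on $\mathbb{R}^{d}$ we have $u(d)\ge u(q)-\|q-d\|$, hence
\[
I(a,d)\ \ge\ \|a-d\|+\|q-a\|-\|q-d\|,
\]
the perimeter defect of the triangle $q,a,d$ at $a$. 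This defect vanishes exactly when $a\in[q,d]$, i.e. when $d$ lies on the line $\ell$ through $\mathcal{R}_a$, so the positivity of the right-hand side must be quantified in terms of $\operatorname{dist}(d,\ell)$.

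Writing $R:=\operatorname{diam}(\mathcal{X}\cup\mathcal{Y})$, $\rho:=\operatorname{dist}(d,\ell)$, and decomposing $d-a$ into components along and orthogonal to $\ell$, one has $\|a-d\|=\sqrt{s^{2}+\rho^{2}}$ and $\|q-d\|=\sqrt{(s-\|q-a\|)^{2}+\rho^{2}}$ with $s:=\langle d-a,\chi\rangle$; using $\sqrt{t^{2}+\rho^{2}}\ge |t|+\rho^{2}/(2R)$ and collecting terms (exactly as in the expansion \eqref{eqn:I-taylor}) gives
\[
\|a-d\|+\|q-a\|-\|q-d\|\ \ge\ \frac{\|q-a\|\,\rho^{2}}{4R^{2}} .
\]
The right-hand side is nondecreasing in $\|q-a\|$, so it suffices to produce one admissible $q$ with $\|q-a\|$ bounded below by a constant and to bound $\rho$ from below.

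For the former: $a\in A_{\delta}$ lies on a transport ray piercing $B_{\delta}(x_{k})\subset S^{i}_{p}$ and inside $\mathsf{Proj}_{\mathcal{X}}(O_{pij})$, and the ray‑cluster construction keeps $O_{pij}$ away from the ray ends (cf. the sets $T^{\lambda}_{pij}$ in Lemma~\ref{lem:lip-change-of-variable}), so $a$ is separated from $a^{+}$ by a distance $\ell_{0}>0$ depending only on $p,i,j$ and $O_{pij}$; take $q\in\mathcal{R}_a$ with $\|q-a\|=\ell_{0}$. For the latter: $d\in D_{\delta}$ is an interior point of some transport ray $\mathcal{R}_{z}$ with $z\in B_{\delta}(x_{k})$, and since $\mathcal{R}_{z}$ and $\mathcal{R}_a$ both pierce the level set $S_p$ at interior points inside $B_{\delta}(x_{k})$, Lemma~\ref{lem:lip-control} gives $\|\chi(z)-\chi(z')\|=O(\delta)$ (with $z'$ the point where $\mathcal{R}_a$ pierces $S_p$); hence $\mathcal{R}_{z}$, and in particular $d$, stays within $O(\delta)$ of $\ell$, and the foot of the perpendicular from $d$ to $\ell$ lies inside the segment $\mathcal{R}_a$ once $\delta$ is small, so that $\rho=\operatorname{dist}(d,\ell)=\operatorname{dist}(d,\mathcal{R}_a)\ge\delta^{1+\kappa}$. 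Plugging into the two displays yields $I(a,d)\ge \tfrac{\ell_{0}}{4R^{2}}\,\delta^{2+2\kappa}$, which is the claim with $c=\ell_{0}/(4R^{2})$.

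The delicate step, and the one I expect to demand the most care, is the identification $\operatorname{dist}(d,\ell)=\operatorname{dist}(d,\mathcal{R}_a)$: one must rule out that the perpendicular foot of $d$ on $\ell$ overshoots an endpoint of $\mathcal{R}_a$, since otherwise $\rho$ could be far smaller than $\operatorname{dist}(d,\mathcal{R}_a)$ and the second display would be vacuous. This is precisely where one uses, quantitatively, both the uniform separation of the rays of a cluster from their ends and the near‑parallelism of neighboring rays supplied by Lemma~\ref{lem:lip-control}; the remaining steps are routine planar geometry together with the $1$‑Lipschitz bound.
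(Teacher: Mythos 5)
Your proof takes a genuinely different route from the paper's. The paper lower-bounds the c-cyclically-monotone rate function $I$ of Lemma~\ref{lem:rate-bound} by exhibiting an explicit two-cycle $\{(a,d),(a',d'')\}$: $a'$ is a perturbation of $a$ of size $\delta^{1+\tau\kappa}$ in the direction $v=\tfrac{a-d}{\|a-d\|}-\tfrac{a-d'}{\|a-d'\|}$, $d''$ is where $\mathcal{R}_{a'}$ meets the level set of $u$ through $d'$, and the estimate rests on $\sin\alpha\gtrsim\mathrm{dist}(d,\mathcal{R}_a)$ together with Lemma~\ref{lem:lip-control} to control $\|d'-d''\|$. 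You instead bound the Kantorovich gap $\tilde I(a,d)=\|a-d\|-u(a)+u(d)$ directly from the $1$-Lipschitz continuity and unit-slope ray-affinity of $u$, using a single auxiliary point $q\in\mathcal{R}_a$ and a planar triangle-excess estimate. That estimate is correct: with $a$ at the origin, $q=L\chi$, $t=-\langle d-a,\chi\rangle>0$ and $\rho=\mathrm{dist}(d,\ell)$,
\begin{equation}
\|a-d\|+\|q-a\|-\|q-d\|=\int_t^{L+t}\frac{\rho^2}{\bigl(\sqrt{u^2+\rho^2}+u\bigr)\sqrt{u^2+\rho^2}}\,\mathrm{d}u\ \geq\ \frac{L\rho^2}{2R^2},
\end{equation}
so your claimed bound holds. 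If it went through completely, your argument would be cleaner than the paper's, since it dispenses with the two-cycle machinery entirely.

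Two points need attention, though. First, the $I$ that the lemma must bound, as it is consumed via Lemma~\ref{lem:rate-bound} and Lemmas~4.1--4.2 of \cite{entropic-cyc-ld}, is the cyclic rate function, and one has $I\leq\tilde I$ unconditionally: for any cycle $(x_i,y_i)_{i=2}^{\ell}\subset\Gamma$ with $(x_1,y_1)=(x,y)$, the $1$-Lipschitz bound $\|x_i-y_{\sigma(i)}\|\geq u(x_i)-u(y_{\sigma(i)})$ together with $\|x_i-y_i\|=u(x_i)-u(y_i)$ for $i\geq 2$ gives $\sum\|x_i-y_i\|-\sum\|x_i-y_{\sigma(i)}\|\leq\tilde I(x,y)$. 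So a lower bound on $\tilde I$ does not by itself lower-bound $I$; you would need to also supply the identification $I=\tilde I$ in this compact setting, or bypass Lemma~\ref{lem:rate-bound} altogether by estimating $d\pi^\varepsilon/d(\mu\otimes\nu)=e^{-(\|x-y\|-f_\varepsilon(x)-g_\varepsilon(y))/\varepsilon}$ with a quantitative rate for $f_\varepsilon\to f_K$. The paper avoids this entirely by constructing a cycle and hence bounding $I$ itself. Second, the step you flag as delicate is indeed the real geometric gap: your estimate needs $\rho=\mathrm{dist}(d,\ell)\geq\delta^{1+\kappa}$, while the hypothesis gives only $\mathrm{dist}(d,\mathcal{R}_a)\geq\delta^{1+\kappa}$, and $\mathrm{dist}(d,\ell)\leq\mathrm{dist}(d,\mathcal{R}_a)$, with strict inequality when the orthogonal foot overshoots the segment. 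The Lipschitz-control of ray directions gives only $\mathrm{dist}(d,\ell)=O(\delta)$, an upper bound, which cannot close this gap; what is needed is a quantitative argument (from the uniform separation of cluster rays from their endpoints plus Lemma~\ref{lem:lip-control}) that the orthogonal foot of $d$ on $\ell$ lies in the interior of $\mathcal{R}_a$. That said, the paper's own proof tacitly makes the same assumption --- the identity $\sin\alpha=\|d-d'\|/\|a-d\|$ presumes $d'$ is an interior point --- so this is a gap you share with the paper rather than one you introduce.
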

\begin{proof}
Fix $\big(a, d'\big) \in \mathcal{R}_a$ where $\mathcal{R}_a$ is the transport ray connecting $a$ and $\mathcal{Y}$ and $d' = \operatorname{argmin}_{y \in \mathcal{R}_a} \|y- d\|$. Let $B_{\delta^{1+\tau\kappa}}(a)$ be a ball of radius $1+\tau\kappa$ around $a$ for some $\tau>1$ such that $0<\tau-1\ll 1$. Define 
\[v = \frac{a-d}{\|a-d\|} - \frac{a-d'}{\|a-d'\|}\]
and choose $a' \in \partial B_{\delta^{1+\tau\kappa}}(a)$ such that
$ a- a'$ is parallel to $v$. Let $\mathcal{R}_{a'}$ be the transport ray from $a'$ and let $d'' \in \mathcal{R}_{a'}\cap \mathcal{Y}$ such that $d''$ and $d'$ resides on the same level set of the Kantorovich potential $u$. 
Due to Lemma 4.2~\cite{entropic-cyc-ld}, it suffices to show the following in order to prove \eqref{eq:Idelta}:
\begin{align}\label{eq:cyclic_monotonic_est}
\|a- d\|-\|a'- d\|+\|a' -d''\|-\|a - d''\|\geq c\delta^{2+(1+\tau)\kappa}.
\end{align}
Notice that 
\begin{align}
 \| & a- d\|-\|a'- d\|+\|a' -d'\|-\|a - d'\| \\
& = \Big\langle \frac{a-d}{\|a-d\|} - \frac{a-d'}{\|a-d'\|} + O(\|d'-d''\|), a-a'\Big\rangle  + O(\|a-a'\|^2)\label{eq:InnerProd}.
\end{align}
Define $\nu_{\vec{d'a}} =  d'-a/\|a-d'\|$ and denote $\nu^{\perp}_{ \vec{d'a}}$ to be the direction perpendicular to $\nu_{\vec{d'a}}$ which is in the same plane as $a-d$. Let $\alpha\in (0,\pi/2)$ be such that $a-d/\|a-d\| = (-\cos \alpha) \nu_{\vec{d'a}}+ (\sin \alpha) \nu^{\perp}_{\vec{d'a}}$. 
Notice that $a-d'/\|a-d'\| = -\nu_{\vec{d'a}}$. As a result, we can write 
\begin{align}
 \frac{a-d}{\|a-d\|} - \frac{a-d'}{\|a-d'\|} &= (-\cos \alpha + 1)\nu_{\vec{aa'}}+ (\sin \alpha) \nu^{\perp}_{\vec{aa'}}, \\ \Big\|\frac{a-d}{\|a-d\|} - \frac{a-d'}{\|a-d'\|}\Big\| &= 2\sin(\alpha/2).
\end{align}
Notice that $\sin(\alpha)= \|d-d'\|/\|a-d\|\geq c\|d-d'\|\geq c\delta^{1+\kappa}$ some positive constant $c\geq \mathrm{dist}(\mathcal{X}, \mathcal{Y})^{-1}$. As a result, we can bound the right-hand side as
\begin{align*}
    \text{r.h.s. of \eqref{eq:InnerProd}} &\geq 2\sin(\alpha/2)\|a-a'\| + O(\|d-d''\|\|a-a'\| + \|a-a'\|^2).
\end{align*}
Notice that the first term on the right-hand side is bounded below by $c\delta^{2+(1+\tau)\kappa}$ for some $c>0$. On the other hand side, $\|a-a'\|$ is bounded above by $c'\delta^{2(1+\tau \kappa)}$. To complete the proof of \eqref{eq:cyclic_monotonic_est}, it suffices to show 
\begin{align}
    \|d'-d''\| \lesssim \|a-a'\|.
\end{align}
Let $d''$ and $d'$ be any two points lying on the same level set $\{u=q\}$ emanating from the points $\Tilde{a}$ and $a$ respectively. Then
\begin{align}
d'' = \Tilde{a} + \nu(\Tilde{a}) \|d''-\Tilde{a}\|,\quad d' = a+ \nu(a) \|d'-a\|. 
\end{align}
Since $\Tilde{a},a\in \{u=p\}$ and $d',d'' \in \{u=q\}$ we have
\begin{align}
\left\|d^{\prime \prime}-\tilde{a}\right\|=u\left(d^{\prime \prime}\right)-u(\tilde{a}) =u\left(d^{\prime}\right)-u(a)=\left\|d^{\prime}-a\right\|.
\end{align}
Using Lemma \ref{lem:lip-control} we have
\begin{align}
\|d'-d''\| &\leq \|\Tilde{a}-a\|+\|d'-a\|\|\nu(\Tilde{a}-\nu(a)\|\\
&\leq \|\Tilde{a}-a\| + \frac{C}{\sigma} \|d'-a\|\| \|\Tilde{a}-a\|\\
&\leq \|\Tilde{a}-a\|\left( 1+ \frac{C}{\sigma} \|d'-a\|\right)\\
&\lesssim \|\Tilde{a}-a\|,
\end{align}
where constant above depends on $C,\sigma$ and $\operatorname{diam}(\mathcal{X}),\operatorname{diam}(\mathcal{Y})$ and $\operatorname{dist}(\mathcal{X},\mathcal{Y}).$ Furthermore, by using the Lipschitz change of coordinates as in Lemma \ref{lem:lip-change-of-variable}, we argue that
\begin{align}
    \|\Tilde{a}-a\| \lesssim \|a'-a\|.
\end{align}
To see this, we note that
\begin{align}
\|\Tilde{a}-a'\|\leq \operatorname{dist}\left((U(\Tilde{a}),0),(U(a'),u(a')-u(\Tilde{a}))\right)\lesssim \|a-a'\|
\end{align}
where the constant $C$ depends on the Lipschitz constant of the change of coordinate map $U$ as in Lemma \ref{lem:lip-change-of-variable}. This, in turn, implies that
\begin{align}
\|\Tilde{a}-a\| &\leq \|\Tilde{a}-a'\|+\|a'-a\| \lesssim \|a'-a\|.
\end{align}
Therefore, we get $\|d'-d''\|\lesssim \|a-a'\|$ as desired. This implies $\|d'-d''\|\lesssim \|a'-a\|$ which completes the proof.
\end{proof}

\begin{lemma}\label{lem:support-lemma}
Let $\pi \in \Gamma(\mu,\nu)$ be an optimal coupling for \eqref{eqn:kant-l1}. Then
\begin{align}
     \pi\left(\bigcup_{p\in \mathbb{Q},i,j\in \mathbb{N}}(T^\circ_{pij}\cap \mathcal{X}\times T^\circ_{pij}\cap \mathcal{Y})\right) = 1.
\end{align}
\end{lemma}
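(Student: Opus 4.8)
The plan is to combine Kantorovich duality with the structure theory of transport rays from \cite{mcann-caff-feldman}. First, since $\pi$ is optimal for \eqref{eqn:kant-l1} and $(f_K,g_K)$ attains the supremum in \eqref{defn:dual-v0}, strong duality yields
$\int\|x-y\|\,\mathrm{d}\pi=\int f_K\,\mathrm{d}\mu+\int g_K\,\mathrm{d}\nu=\int\big(f_K(x)+g_K(y)\big)\,\mathrm{d}\pi$,
so the nonnegative integrand $\|x-y\|-f_K(x)-g_K(y)$ vanishes $\pi$-almost everywhere. Hence $\pi$ is concentrated on $E:=\{(x,y)\in\mathcal{X}\times\mathcal{Y}:f_K(x)+g_K(y)=\|x-y\|\}$. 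By the relation between $(f_K,g_K)$ and the $1$-Lipschitz potential $u$, for every $(x,y)\in E$ the function $u$ is affine with unit slope on the segment joining $x$ and $y$, and $\operatorname{dist}(\mathcal{X},\mathcal{Y})>c_0$ forces $x\neq y$ on $E$. By the analysis of \cite{evans1999differential,mcann-caff-feldman}, such a segment extends to a maximal transport ray $\mathcal{R}=\mathcal{R}_{(x,y)}$ with $x,y\in\mathcal{R}$, and this ray is unique since ray interiors are disjoint.

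The next, and only delicate, point is to upgrade ``$x,y\in\mathcal{R}$'' to ``$x,y\in\mathcal{R}^{\circ}$''. The set of upper ends of transport rays is a subset of $\mathcal{X}$ and the set of lower ends a subset of $\mathcal{Y}$, and both are $\mathcal{L}^d$-negligible; this negligibility is precisely what makes Monge's map well defined and is established in \cite{mcann-caff-feldman} (see also Section~6 of \cite{lecture-notes-ambrosio}). Since $\mu,\nu\ll\mathcal{L}^d$, the upper ends form a $\mu$-null set and the lower ends a $\nu$-null set, so, as the marginals of $\pi$ are $\mu$ and $\nu$, $\pi$ assigns zero mass to $\{(x,y):x\text{ is an upper end}\}\cup\{(x,y):y\text{ is a lower end}\}$. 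Combining this with the previous paragraph: for $\pi$-a.e.\ $(x,y)$ the point $x\in\mathcal{X}$ lies on $\mathcal{R}_{(x,y)}$ but is neither its upper end (excluded above) nor its lower end (which lies in $\mathcal{Y}$, disjoint from $\mathcal{X}$), hence $x\in\mathcal{R}_{(x,y)}^{\circ}$; symmetrically $y\in\mathcal{R}_{(x,y)}^{\circ}$. Note also $\mathcal{R}_{(x,y)}^{\circ}\neq\emptyset$ because $\|x-y\|\geq c_0>0$.

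Finally, invoke the covering of $T_1$. By Lemma~20 of \cite{mcann-caff-feldman} the ray clusters $\{T_{pij}\}_{p\in\mathbb{Q},\,i,j\in\mathbb{N}}$ cover $T_1$, and for every such cluster and every transport ray $\mathcal{R}$ one has the dichotomy $\mathcal{R}^{\circ}\subset T_{pij}$ or $\mathcal{R}^{\circ}\cap T_{pij}=\emptyset$. Applying this with $\mathcal{R}=\mathcal{R}_{(x,y)}$, whose interior is nonempty and contained in $T_1$, there is a triple $(p,i,j)$ with $\mathcal{R}_{(x,y)}^{\circ}\subset T_{pij}$; since $\mathcal{R}_{(x,y)}^{\circ}$ contains no ray ends, in fact $\mathcal{R}_{(x,y)}^{\circ}\subset T^{0}_{pij}=T^{\circ}_{pij}$. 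Therefore $(x,y)\in(T^{\circ}_{pij}\cap\mathcal{X})\times(T^{\circ}_{pij}\cap\mathcal{Y})$. As this holds for $\pi$-a.e.\ $(x,y)$ and the union ranges over the countable index set $\mathbb{Q}\times\mathbb{N}\times\mathbb{N}$, we conclude $\pi\big(\bigcup_{p\in\mathbb{Q},\,i,j\in\mathbb{N}}(T^{\circ}_{pij}\cap\mathcal{X})\times(T^{\circ}_{pij}\cap\mathcal{Y})\big)=1$.

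The duality computation and the cluster bookkeeping are routine; the genuine obstacle is the $\mathcal{L}^d$-negligibility of the ray endpoints, which one either cites from \cite{mcann-caff-feldman,lecture-notes-ambrosio} or re-derives by using the Lipschitz regularity of the ray-direction field on level sets of $u$ (Lemma~\ref{lem:lip-control}) together with the Lipschitz coordinates on ray clusters (Lemma~\ref{lem:lip-change-of-variable}) to place the endpoints on a countable family of $(d-1)$-rectifiable sets.
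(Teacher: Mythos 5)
Your proof is correct and relies on exactly the same three ingredients as the paper's: optimality of $\pi$ (via duality tightness / $c$-cyclical monotonicity) forces $x$ and $y$ to lie on a common transport ray; $\mathcal{L}^d$-negligibility of ray ends (Lemma~25 of \cite{mcann-caff-feldman}) together with $\mu,\nu\ll\mathcal{L}^d$ places $\pi$-a.e.\ $(x,y)$ in the ray interior; and Lemma~20's dichotomy places that interior inside a single cluster $T^{\circ}_{pij}$. The only difference is in bookkeeping: you land each $\pi$-generic pair directly inside one cluster, whereas the paper first shows concentration on the full product $T_1^{\circ}\cap\mathcal{X}\times T_1^{\circ}\cap\mathcal{Y}$ and then eliminates the ``cross-cluster'' set $Z$ by showing that disjointness of the underlying rays would contradict the collinearity forced by $u(x)-u(y)=\|x-y\|$ and the triangle inequality — an argument that is, in substance, the same observation you make when you note $u$ is affine with unit slope along $[x,y]$.
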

\begin{proof}
From Lemma 9 in \cite{mcann-caff-feldman}, we know that $\mathcal{X}\cup \mathcal{Y}\subset T_0 \cup T_1$. Since $\mathcal{X}\cap \mathcal{Y}=\emptyset$, the set of points rays of length zero is empty, that is, $T_0 = \emptyset.$ Next since $\mathcal{X}\subset T_1 \cap \mathcal{X}$ and $\mathcal{Y}\subset T_1 \cap \mathcal{Y}$ we get that for any $\pi \in \Gamma(\mu,\nu)$
\begin{align}
1=\mu(\mathcal{X})\nu(\mathcal{Y})=\pi(\mathcal{X}\times \mathcal{Y})\leq \pi(T_1^\circ \cap \mathcal{X}\times T_1^\circ \cap \mathcal{Y}),
\end{align}
where we used the fact that $T_1^\circ = T_1\setminus \mathcal{E}$ and $\mathcal{E}$ denotes the set of ray ends which by Lemma 25 in \cite{mcann-caff-feldman} have zero Lebesgue measure.
Next, since 
\begin{align}
T_1^\circ \cap \mathcal{X} = \bigcup_{p\in \mathbb{Q},i,j\in \mathbb{N}}T^\circ_{pij}\cap \mathcal{X},\quad T_1^\circ \cap \mathcal{Y} = \bigcup_{q\in \mathbb{Q},k,l\in \mathbb{N}}T^\circ_{qkl}\cap \mathcal{Y}
\end{align}
we have
\begin{align}
1 &=\pi\left(\bigcup_{p\in \mathbb{Q},i,j\in \mathbb{N}}T^\circ_{pij}\cap \mathcal{X}\times \bigcup_{q\in \mathbb{Q},k,l\in \mathbb{N}}T^\circ_{qkl}\cap \mathcal{Y}\right) \\
&\leq \pi\left(\bigcup_{p\in \mathbb{Q},i,j\in \mathbb{N}} \bigcup_{q\in \mathbb{Q},k,l\in \mathbb{N}}(T^\circ_{pij}\cap \mathcal{X})\times  (T^\circ_{pij}\cap T^\circ_{qkl}\cap \mathcal{Y})\right) \\
&\quad + \pi\left(\bigcup_{p\in \mathbb{Q},i,j\in \mathbb{N}} \bigcup_{q\in \mathbb{Q},k,l\in \mathbb{N}}(T^\circ_{pij}\cap \mathcal{X})\times  ((T^\circ_{qkl}\setminus T^\circ_{pij})\cap \mathcal{Y})\right)\\
&\leq \pi\left(\bigcup_{p\in \mathbb{Q},i,j\in \mathbb{N}}(T^\circ_{pij}\cap \mathcal{X}\times T^\circ_{pij}\cap \mathcal{Y})\right) + \pi(Z)
\end{align}
where we define the set $Z$ as
\begin{align}
    Z:= \bigcup_{p\in \mathbb{Q},i,j\in \mathbb{N}} \bigcup_{q\in \mathbb{Q},k,l\in \mathbb{N}}(T^\circ_{pij}\cap \mathcal{X})\times  ((T^\circ_{qkl}\setminus T^\circ_{pij})\cap \mathcal{Y}).
\end{align}
We will now show that $\pi(Z)=0.$ Indeed, if $x\in T^\circ_{pij}\cap \mathcal{X}$ then there exists a transport ray such that $x\in \mathcal{R}_1^\circ$ with endpoints $(a_x,b_x)$ intersecting the level set $S^i_p$. On the other hand if $y\in (T^\circ_{qkl}\setminus T^\circ_{pij})\cap \mathcal{Y}$ then $y\in \mathcal{R}_2^\circ$ some transport ray with endpoints $(a_y,b_y)$ that is disjoint for $\mathcal{R}_1$, i.e. $\mathcal{R}_1^\circ \cap \mathcal{R}_2^\circ = \emptyset.$ We claim that $u(x)-u(y) \neq \|x-y\|.$ Suppose to the contrary $u(x)-u(y) = \|x-y\|$, then we will show that $a_x,x,y,b_y$ are collinear. This is because $u(x)-u(y) = \|x-y\|$, $u(x)= u(a_x) - \|x-a_x\|$ and $u(y)=u(b_y)+\|y-b_y\|$ imply 
\begin{align}
\|a_x-x\| + \|x-y\| + \|y-b_y\| = u(a_x) - u(b_y) \leq \|a_x-b_y\|.
\end{align}
Combining this with the triangle inequality $\|a_x-b_y\| \leq \|a_x-x\| + \|x-~y\| + \|y-b_y\|$ yields equality
\begin{align}
    \|a_x-x\| + \|x-y\| + \|y-b_y\| = \|a_x-b_y\|
\end{align}
which implies that $a_x,x, y,b_y$ are collinear which contradicts $\mathcal{R}_1^\circ \cap \mathcal{R}_2^\circ =~ \emptyset.$ Thus $u(x)-u(y)\neq \|x-y\|.$ Define 
\begin{align}
\Gamma:=\left\{(x, y) \in \mathcal{X} \times \mathcal{Y}: u(x)-u(y)=\|x-y\|\right\} .
\end{align}
Since $\pi\in \Gamma(\mu,\nu)$ is an optimal coupling for \eqref{eqn:kant-l1} we have that $\pi(\Gamma)=1.$ Therefore
\begin{align}
\pi(Z)\leq \pi(\mathcal{X}\times \mathcal{Y}\setminus \Gamma) = 0    
\end{align}
implying that $\pi(Z)=0.$ Thus,
\begin{align}
 \pi\left(\bigcup_{p\in \mathbb{Q},i,j\in \mathbb{N}}(T^\circ_{pij}\cap \mathcal{X}\times T^\circ_{pij}\cap \mathcal{Y})\right) = 1.   
\end{align}
\end{proof}
\bibliographystyle{alpha}
\bibliography{refs}
\end{document}